\newtheorem{theorem}{Theorem}[section]
\newtheorem{lemma}[theorem]{Lemma}
\newtheorem{corollary}[theorem]{Corollary}
\newtheorem{question}[theorem]{Question}
\newtheorem*{maintheorem}{Main Theorem}
\newtheorem*{theorem*}{Theorem}
\newtheorem*{corollary*}{Corollary}
\newtheorem*{observation}{Observation}
\newtheorem*{sub-claim}{sub-claim}
\theoremstyle{definition}
\newtheorem{example}[theorem]{Example}
\newtheorem{definition}[theorem]{Definition}
\theoremstyle{remark}
\newtheorem{remark}[theorem]{Remark}
\newtheorem*{definition*}{Definition}
\newcommand{\R}{\mathbb{R}}
\newcommand{\N}{\mathbb{N}}
\newcommand{\A}{\mathcal{A}}
\newcommand{\B}{\mathcal{B}}
\newcommand{\C}{\mathcal{C}}
\newcommand{\U}{\mathcal{U}}
\newcommand{\explicitSet}[1]{\left\lbrace #1 \right\rbrace}
\newcommand{\brackets}[1]{\left\langle #1 \right\rangle}
\newcommand{\set}[2]{\explicitSet{#1 \colon #2}}
\newcommand{\seq}[2]{\brackets{#1 \colon #2}}
\renewcommand{\a}{\alpha}
\renewcommand{\b}{\beta}
\newcommand{\g}{\gamma}
\newcommand{\dlt}{\delta}
\newcommand{\e}{\varepsilon}
\newcommand{\z}{\zeta}
\renewcommand{\k}{\kappa}
\newcommand{\s}{\sigma}
\renewcommand{\t}{\tau}
\newcommand{\w}{\omega}
\newcommand{\0}{\emptyset}
\newcommand{\sub}{\subseteq}
\newcommand{\rest}{\!\restriction\!}
\newcommand{\domain}{\mathrm{dom}}
\newcommand{\cat}{\!\,^{{}_{{}^\frown\!}}}
\newcommand{\closure}[1]{\overline{#1}}
\newcommand{\cf}{\mathrm{cf}}
\newcommand{\card}[1]{\left\lvert #1 \right\rvert}
\newcommand{\PP}{\mathbb{P}}
\newcommand{\HH}{\mathbb{H}}
\newcommand{\BB}{\mathbb{B}}
\newcommand{\DD}{\mathbb{D}}
\newcommand{\continuum}{\mathfrak{c}}
\newcommand{\pseudo}{\mathfrak{p}}
\newcommand{\dom}{\mathfrak d}
\newcommand{\bdd}{\mathfrak b}
\newcommand{\ch}{\ensuremath{\mathsf{CH}}\xspace}
\newcommand{\zfc}{\ensuremath{\mathsf{ZFC}}\xspace}
\newcommand{\ma}{\ensuremath{\mathsf{MA}}\xspace}
\newcommand{\VL}{V=\mathrm{L}}
\newcommand{\gch}{\ensuremath{\mathsf{GCH}}\xspace}
\newcommand{\vws}{\ensuremath{\mathsf{VWS}}\xspace}
\newcommand{\axiom}{\raisebox{.5mm}{$\bigtriangledown$}\xspace}
\newcommand{\nt}{\mathrm{Nt}}
\newcommand{\pnt}{\pi\mathrm{Nt}}
\newcommand{\ro}{\mathsf{ro}}
\newcommand{\down}{\hspace{-.5mm}\downarrow\hspace{.5mm}}
\begin{document}

\title{Telg\'arsky's conjecture may fail}
\author{Will Brian}
\address {
Will Brian\\
Department of Mathematics and Statistics\\
9201 University City Blvd.\\
Charlotte, NC 28223}
\email{wbrian.math@gmail.com}
\urladdr{wrbrian.wordpress.com}
\author{Alan Dow}
\address {
Alan Dow\\
Department of Mathematics and Statistics\\
University of North Carolina at Charlotte\\
Charlotte, NC 28223}
\email{adow@uncc.edu}
\author{David Milovich}
\address {
David Milovich\\
Welkin Sciences\\
Colorado Springs, CO 80903}
\email{david.milovich@welkinsciences.com}
\urladdr{dkmj.org}
\author{Lynne Yengulalp}
\address {
Lynne Yengulalp\\
Department of Mathematics\\
University of Dayton\\
Dayton, OH 45469}
\email{lyengulalp1@udayton.edu}
\subjclass[2010]{91A44, 03E05, 03E35}
\keywords{Banach-Mazur game, $k$-tactics, posets, constructible universe}

\begin{abstract}
Telg\'arsky's conjecture states that for each $k \in \mathbb N$, there is a topological space $X_k$ such that in the Banach-Mazur game on $X_k$, the player {\scriptsize NONEMPTY} has a winning $(k+1)$-tactic but no winning $k$-tactic. 
We prove that this statement is consistently false.

More specifically, we prove, assuming $\mathsf{GCH}+\square$, that  if {\scriptsize NONEMPTY} has a winning strategy for the Banach-Mazur game on a $T_3$ space $X$, then she has a winning $2$-tactic. The proof uses a coding argument due to Galvin, whereby if $X$ has a $\pi$-base with certain nice properties, then {\scriptsize NONEMPTY} is able to encode, in each consecutive pair of her opponent's moves, all essential information about the play of the game before the current move. Our proof shows that under $\mathsf{GCH}+\square$, every $T_3$ space has a sufficiently nice $\pi$-base that enables this coding strategy.

Translated into the language of partially ordered sets, what we really show is that $\mathsf{GCH}+\square$ implies the following statement, which is equivalent to the existence of the ``nice'' $\pi$-bases mentioned above:
\begin{itemize}
\item[\raisebox{.5mm}{$\bigtriangledown$}:] Every separative poset $\mathbb P$ with the $\kappa$-cc contains a dense sub-poset $\mathbb D$ such that $|\{ q \in \mathbb D \,:\, p \text{ extends } q \}| < \kappa$ for every $p \in \mathbb P$. 
\end{itemize}
We prove that this statement is independent of $\mathsf{ZFC}$: while it holds under $\mathsf{GCH}+\square$, it is false even for ccc posets if $\mathfrak{b} > \aleph_1$. We also show that if $|\mathbb P| < \aleph_\omega$, then \axiom-for-$\mathbb P$ is a consequence of $\mathsf{GCH}$ holding below $|\mathbb P|$.
\end{abstract}

\maketitle

\section{Introduction}

The Banach-Mazur game appeared in 1935, in question 43 of the Scottish Book, the now-famous notebook of Stefan Banach begun earlier the same year.\footnote{A scan of Mazur's question, in its original form, can be seen in \cite{ScottishBook1}, or an English translation in \cite{ScottishBook2}.} The author of the question was Stanis\l aw Mazur, and a solution was found by Banach later in 1935. Banach showed (in our terminology) that in the Banach-Mazur game on some $X \sub \R$, player {\small NONEMPTY} has a winning strategy if and only if $X$ is co-meager, and player {\small EMPTY} has a winning strategy if and only if $X$ is meager on some non-degenerate interval. (A proof can be found, e.g., in \cite[Chapter 6]{Oxtoby}.) The Banach-Mazur game is the first infinite game of perfect information to be studied. 

A \emph{$k$-tactic} in the Banach-Mazur game is a strategy for one player that depends only on the previous $k$ moves of the opposing player. This is one example of a \emph{limited-information strategy}, a recurring theme in the study of the Banach-Mazur game and other topological games. Such strategies were studied by Debs in \cite{Debs}, where he proved that there is a space $X$ for which {\small NONEMPTY} has a winning $2$-tactic, but no winning $1$-tactic. Shortly after Debs' paper appeared, Telg\'arsky conjectured in \cite[page 236]{Telgarsky} that for every $k \geq 2$, there is a space $X_k$ such that {\small NONEMPTY} has a winning $(k+1)$-tactic in the Banach-Mazur game on $X_k$, but no winning $k$-tactic. (See also problems 204-206 in \cite{OPIT}, and Conjecture 2 in \cite{BJS}.) Our main theorem in this paper shows that this conjecture, when restricted to $T_3$ spaces, is not provable from \zfc.

\begin{maintheorem}
Assume $\mathsf{GCH}+\square$. For every $T_3$ space $X$, if \emph{\small NONEMPTY} has a winning strategy in the Banach-Mazur game on $X$, then \emph{\small NONEMPTY} has a winning $2$-tactic in  the Banach-Mazur game on $X$.
\end{maintheorem}

\noindent This is mildly abridged version of the main theorem: the full version (Theorem~\ref{thm:main} below) has a weaker hypothesis than $\square$, and is stated for the class of quasi-regular spaces, which is broader than the class of $T_3$ spaces. 

The proof of this theorem uses a coding argument due to Galvin, whereby if $X$ has a $\pi$-base with certain nice properties, then {\small NONEMPTY} is able to encode, in each consecutive pair of her opponent's moves, all essential information about the play of the game before the current round.

In Theorem~\ref{thm:translation} below, we will see that the existence of these sufficiently nice $\pi$-bases for quasi-regular spaces is equivalent to the following statement concerning partially ordered sets: 
\begin{itemize}
\item[$\axiom$:] Every separative poset $\mathbb P$ with the $\kappa$-cc contains a dense sub-poset $\mathbb D$ such that $|\{ q \in \mathbb D \,:\, p \text{ extends } q \}| < \kappa$ for every $p \in \mathbb P$. 
\end{itemize}
In short, our proof works by showing this statement is consistent, and this suffices to prove the main theorem via Galvin's coding argument. 

The proof of the consistency of \axiom uses a generalization to higher cardinals of the combinatorial structures called ``sage Davies trees'' in \cite{Soukup^2}. The existence of the sage Davies trees of \cite{Soukup^2} suffices to prove \axiom for ccc partial orders; the generalized structures are introduced to handle $\k$-cc partial orders for uncountable $\k$. Our construction of these structures uses \gch plus a very weak version of $\square$. If $|\mathbb P| < \aleph_\omega$, then \axiom-for-$\mathbb P$ is a consequence of \gch holding below $|\mathbb P|$. 

We also show the independence of \axiom from \zfc by proving that if $\bdd > \aleph_1$, then the Hechler forcing does not satisfy \axiom. Similarly, $\ma+\neg\ch$ implies that both the random real forcing and a ccc variant of Mathias forcing fail to satisfy \axiom. 

The topological content of the paper is contained in Section~\ref{sec:BM}. There we review some facts concerning the Banach-Mazur game, exposit Galvin's (previously unpublished) coding argument, and show the hypotheses of Galvin's theorem are equivalent to \axiom.
The relative consistency of \axiom is proved in Section~\ref{sec:V=L} via the construction of generalized Davies trees.
Section~\ref{sec:independence} contains the independence results mentioned in the previous paragraph.

\section{The Banach-Mazur game}\label{sec:BM}

Let $X$ be a nonempty topological space.
The Banach-Mazur game on $X$, denoted $\mathrm{BM}(X)$, is played by two players, whom we call {\small EMPTY} and {\small NONEMPTY}, alternately choosing nonempty open subsets of $X$ as follows. In round $0$ of the game, {\small EMPTY} chooses any nonempty open $U_0 \sub X$, and then {\small NONEMPTY} chooses any nonempty open $V_0 \sub U_0$. In round $1$, {\small EMPTY} chooses a nonempty open $U_1 \sub V_0$, and then {\small NONEMPTY} chooses a nonempty open $V_1 \sub U_1$. Continuing in this way, the players select an infinite sequence 
$U_0 \supseteq V_0 \supseteq U_1 \supseteq V_1  \supseteq U_2 \supseteq V_2 \supseteq \dots$
of open subsets of $X$. {\small NONEMPTY} wins this play of the game provided that $\bigcap_{n \in \w}U_n = \bigcap_{n \in \w}V_n \neq \0$, and otherwise {\small EMPTY} wins.

\vspace{2mm}
\begin{center}
\begin{tikzpicture}

\node at (0.37,1) {\footnotesize EMPTY};
\node at (0,0) {\footnotesize NONEMPTY};

\node at (2.4,1) {\footnotesize $U_0$};
\node at (3.7,0) {\footnotesize $V_0$};
\begin{scope}[shift={(3,.5)}]
\node[rotate=-42] at (0,0) {\footnotesize $\supseteq$};
\end{scope}
\begin{scope}[shift={(4.3,.5)}]
\node[rotate=42] at (0,0) {\footnotesize $\supseteq$};
\end{scope}

\node at (5,1) {\footnotesize $U_1$};
\node at (6.3,0) {\footnotesize $V_1$};
\begin{scope}[shift={(5.6,.5)}]
\node[rotate=-42] at (0,0) {\footnotesize $\supseteq$};
\end{scope}
\begin{scope}[shift={(6.9,.5)}]
\node[rotate=42] at (0,0) {\footnotesize $\supseteq$};
\end{scope}

\node at (7.6,1) {\footnotesize $U_2$};
\node at (8.9,0) {\footnotesize $V_2$};
\begin{scope}[shift={(8.2,.5)}]
\node[rotate=-42] at (0,0) {\footnotesize $\supseteq$};
\end{scope}
\begin{scope}[shift={(9.5,.5)}]
\node[rotate=42] at (0,0) {\footnotesize $\supseteq$};
\end{scope}

\node at (10.4,.75) {$\dots$};

\end{tikzpicture}
\end{center}

\begin{definition}
A \emph{strategy} for a player in $\mathrm{BM}(X)$ is a rule for choosing what to play in any given round, given all the preceding plays. 
Formally, a strategy for {\small NONEMPTY} is a function $\s$ mapping each nested sequence of nonempty open sets $U_0 \supseteq V_0 \supseteq \dots \supseteq U_{n-1} \supseteq V_{n-1} \supseteq U_n$ to some nonempty open $V_n \sub U_n$. (And a strategy for {\small EMPTY} is defined analogously.) A \emph{winning strategy} for a given player is a strategy that always produces a win for that player.
\end{definition}


For example, if $X$ is a compact Hausdorff space, then a winning strategy for {\small NONEMPTY} could be: given $U_n$, choose any nonempty open $V_n$ such that $\closure{V_n} \sub U_n$. Note that this strategy, when applied in round $n$, ignores all of the gameplay from previous rounds, and only takes into account the play of {\small EMPTY} from the first part of round $n$. This example exhibits what is called a \emph{stationary} winning strategy for {\small NONEMPTY}, which means that the strategy only depends on the previous move of her opponent. This is one example of what is called a \emph{limited information strategy}. Other important kinds of limited information strategies include ($k$-)Markov strategies \cite{Galvin&Telgarsky} and coding strategies \cite{Debs,Galvin&Telgarsky}. For the remainder of this section, we will focus on the following generalization of stationary strategies.

\begin{definition}
A \emph{$k$-tactic} for {\small NONEMPTY} in $\mathrm{BM}(X)$ is strategy that depends only on the previous $k$ moves of {\small EMPTY}. That is, $\s$ is a $k$-tactic if and only if there is a function $\varsigma$, defined on $k$-length sequences of open sets, such that $\s(U_0,V_0,\dots,U_{n-1},V_{n-1},U_n) = \varsigma(U_{n-k+1},\dots,U_{n-1},U_n)$ for every sequence $U_0 \supseteq V_0 \supseteq \dots \supseteq U_{n-1} \supseteq V_{n-1} \supseteq U_n$.
\end{definition}

Thus, for example, a stationary strategy is the same thing as a $1$-tactic. Our interest in $k$-tactics begins with the following theorem of Debs:

\begin{theorem*}[Debs \cite{Debs}, 1985]
There is a completely regular space $X$ for which \emph{{\small NONEMPTY}} has a winning $2$-tactic but no winning $1$-tactic. 
\end{theorem*}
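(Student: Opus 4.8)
The approach I would take is to realize $X$ as a \emph{dense subspace of the branch space $[T]$ of a suitable tree $T$ of finite sequences}, with the basic clopen sets of $X$ given by the cones $\{b \in X : s \subset b\}$ for nodes $s \in T$. Complete regularity is then automatic, and a play of $\mathrm{BM}(X)$ amounts to the two players building an increasing chain of nodes $s_0 \subsetneq s_1 \subsetneq \cdots$, with {\small NONEMPTY} winning exactly when the branch $\bigcup_n s_n$ happens to lie in $X$ — the interesting point being that $X$ is a \emph{proper} subset of $[T]$, so ``just keep extending'' need not win.

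I would arrange $T$ and $X$ so that three things hold. First, {\small NONEMPTY} has a winning full-information strategy: I would equip each node with a family of ``correcting'' extensions so that, from any position, {\small NONEMPTY} can steer the branch back toward a branch of $X$ — a design choice baked into which branches we put in $X$. Second, {\small NONEMPTY} has no winning $1$-tactic: a $1$-tactic is a bare function $\varsigma$ sending each node $s$ to a fixed extension $\varsigma(s)$, with no access to which proper initial segments of $s$ {\small EMPTY} actually played; I would make the tree branch enough, and include enough branches \emph{outside} $X$, that {\small EMPTY} — who sees $\varsigma$ and is free to extend by arbitrarily large jumps — can repeatedly exploit this missing information to steer the union branch out of $X$ while always staying inside $\varsigma$ of his own previous move. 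Third, {\small NONEMPTY} has a winning $2$-tactic: I would \emph{label} the nodes of $T$ so that a node at level $n$ codes the finite history of {\small NONEMPTY}'s moves so far, in such a way that every node {\small EMPTY} can legally play just below one of {\small NONEMPTY}'s moves inherits a \emph{correct} code. Then, given the previous two moves $U_{n-1} \supseteq U_n$ of {\small EMPTY}, {\small NONEMPTY} recovers the deep history from the label on $U_{n-1}$, takes $U_{n-1}$ and $U_n$ themselves as the two freshest data, reconstructs the entire play $\langle U_0, V_0, \dots, U_n \rangle$, simulates her full-information strategy on it, and replies with an extension of $U_n$ whose label updates the code.

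A single move will not suffice, essentially because a lone node's label is not ``certified'': {\small EMPTY} can always play a node carrying a stale label coding the wrong (shorter, or diverging) history, and a stationary {\small NONEMPTY} has no way to detect this. With two consecutive moves, the intervening {\small NONEMPTY} move $V_{n-1}$ between $U_{n-1}$ and $U_n$ forces a consistency condition between the label on $U_{n-1}$ and the position $U_n$, and it is precisely this consistency that {\small EMPTY} cannot forge — which is the structural content separating $k=1$ from $k=2$.

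The step I expect to be the main obstacle is making the two halves compatible: the tree must carry enough ``decoy'' branching outside $X$ that no stationary rule can be right everywhere, yet the labelling must be rigid enough that a pair of consecutive moves pins down the history uniquely. Producing an explicit $T$, $X$, and labelling for which \emph{both} the diagonalization against $1$-tactics and the verification of the $2$-tactic go through — and checking that {\small NONEMPTY} really does keep a winning full-information strategy throughout, so that there is something for the $2$-tactic to simulate — is where the real work lies; the topological side (density, complete regularity, the cones forming a clopen base) should then come essentially for free from the tree presentation.
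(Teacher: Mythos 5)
First, a point of order: the paper does not prove this statement --- it is quoted from Debs' 1985 paper --- so there is no internal proof to measure yours against. What the paper does supply is the shape of Debs' actual example (the Example in Section~\ref{sec:BM} is a modification of it: $2^{\kappa}$ with the product topology refined by deleting small sets) and a general form of the $2$-tactic half of the argument (Theorem~\ref{thm:galvin}). Your plan for the $2$-tactic --- code the history of the play into consecutive moves, decode it, and simulate a full-information winning strategy --- is exactly the Debs/Galvin coding idea, so that half is on target in spirit.

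The fatal problem is your choice of ambient space. If $X$ is a dense subspace of the branch space $[T]$ of a tree $T$ of finite sequences, with the cones forming a clopen base, then $X$ embeds in $A^{\omega}$ for some discrete set $A$ and is therefore metrizable. For metrizable spaces the phenomenon you are trying to exhibit cannot occur: if {\small NONEMPTY} has a winning strategy in $\mathrm{BM}(X)$ and $X$ is metrizable, then $X$ has a dense completely metrizable subspace $D$ (a classical result; see White's characterization of weak $\alpha$-favorability and \cite{Galvin&Telgarsky}), and a complete metric $\rho$ on $D$ yields a winning $1$-tactic outright: given $U$, respond with an open $V \subseteq U$ such that the closure of $V \cap D$ in $D$ is contained in $U \cap D$ and $\rho\text{-}\mathrm{diam}(V \cap D) \leq \frac{1}{2}\min\left\{1, \rho\text{-}\mathrm{diam}(U \cap D)\right\}$. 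This rule depends only on $U$; in any play the diameters tend to $0$, the closures are nested, and completeness of $\rho$ places a point of $D \subseteq X$ in $\bigcap_{n} V_n$. So no amount of decoy branching outside $X$ can defeat all $1$-tactics while preserving the full-information winning strategy your $2$-tactic needs to simulate --- the two requirements are incompatible on any space of the form you propose. (Relatedly, Theorem~\ref{thm:galvin} applies automatically to every such tree space, since each cone is contained in only finitely many cones, so the $1$-versus-$2$ distinction is not being tested there at all.) A genuine witness must be badly non-metrizable --- Debs' space is a non-first-countable refinement of a compact space, precisely so that the diameter-shrinking trick is unavailable --- and identifying such a space, rather than the coding mechanism, is where the real content of the theorem lies.
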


It is fairly easy to show that {\small NONEMPTY} has a winning (full information) strategy in the Banach-Mazur game on Debs' space. The existence of a winning $2$-tactic is more difficult to prove. The key is for {\small NONEMPTY} to use a topological property of Debs' space to reduce an arbitrary full-information strategy to a $2$-tactic. In this $2$-tactic, {\small NONEMPTY} does not really have to rely on limited information: instead, the entire history of the game (slightly modified) is coded into {\small EMPTY}'s two previous moves, so that {\small NONEMPTY} simply has to decode it, and then play according to her full-information winning strategy.
Before describing this coding strategy in detail, we review some topological terminology.

\begin{definition}
Let $X$ be a topological space. 
A \emph{cellular family} in $X$ is a collection of nonempty pairwise disjoint open subsets of $X$. A cellular family $\mathcal S$ is \emph{maximal} if it is not properly contained in any other cellular family, or, equivalently, if $\bigcup \mathcal S$ is dense in $X$.
The \emph{Souslin number} of $X$, denoted $S(X)$, is defined as
$$S(X) = \min \set{\k}{X \text{ has no cellular family of size } \k}$$
and $X$ is called \emph{ccc} if $S(X) \leq \aleph_1$.
A \emph{$\pi$-base} for $X$ is a collection $\B$ of nonempty open subsets of $X$ such that every nonempty open subset of $X$ contains a member of $\B$. The \emph{Noetherian type} of $\B$, denoted $\nt(\B)$, is  
$$\nt(\B) = \min \set{\k}{ \text{for all nonempty open } U \sub X, \card{\set{V \in \B}{U \sub V}} < \k}.$$
The \emph{$\pi$-Noetherian type} of $X$ is
$$\pnt(X) = \min \set{\nt(\B)}{\B \text{ is a } \pi \text{-base for } \k}.$$
\end{definition}

\vspace{2mm}

The following theorem is an unpublished result of Galvin, also recorded (without proof) as Theorem 39 in \cite{BJS}. The idea of the theorem is just to extend to a general setting Debs' coding idea that converts an arbitrary winning strategy into a winning $2$-tactic. We record a proof of it here, as Galvin's theorem is the link between our main theorem and the set-theoretic results of the next section.

\begin{theorem}[Galvin]\label{thm:galvin}
Let $X$ be a space for which \emph{\small NONEMPTY} has a winning strategy in $\mathrm{BM}(X)$. If $\pnt(U) \leq S(U)$ for all nonempty open $U \sub X$, then \emph{\small NONEMPTY} has a winning $2$-tactic in $\mathrm{BM}(X)$.
\end{theorem}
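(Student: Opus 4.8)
The plan is to begin from a winning full-information strategy $\s$ for {\small NONEMPTY} and refashion it into a winning $2$-tactic, extending Debs' original coding trick to the present level of generality. The first (routine) reduction: we may assume $\s$ depends only on {\small EMPTY}'s moves, because under $\s$ player {\small NONEMPTY}'s replies are themselves computed by $\s$, so any $\s$-consistent position $U_0\supseteq V_0\supseteq\dots\supseteq U_n$ is already determined by $\langle U_0,\dots,U_n\rangle$; thus regard $\s$ as a map $\s(U_0,\dots,U_n)=V_n$. The goal becomes: arrange that {\small NONEMPTY} can reconstruct, from the single pair $(U_{n-1},U_n)$, enough information to evaluate $\s$. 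She will run a ``shadow'' play of $\mathrm{BM}(X)$ internally, maintaining a $\s$-consistent position $\langle\tilde U_0,\tilde V_0,\dots,\tilde U_n,\tilde V_n\rangle$ with $\tilde V_j=\s(\tilde U_0,\dots,\tilde U_j)$, engineered so that (i) $\tilde U_j\sub U_j$ at every round, (ii) her genuine move $V_n$ refines $\tilde V_n$, and (iii) the shadow position through round $n$ is decodable from $(U_n,U_{n+1})$. Granting (i)--(iii) the $2$-tactic is clear: decode the shadow position, extend it one round using $\s$, and play a suitable refinement (recording the data needed for the next decoding). And the verification that this wins is the easy endgame: from $\tilde U_{n+1}\sub U_{n+1}\sub V_n\sub\tilde V_n\sub\tilde U_n$ the sequences $(U_n)_n$ and $(\tilde U_n)_n$ are mutually cofinal in $\supseteq$, so $\bigcap_nU_n=\bigcap_n\tilde U_n$, which is nonempty because $\s$ wins the (legal) shadow play; as $\bigcap_nU_n=\bigcap_nV_n$ as well, {\small NONEMPTY} wins.

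The hypothesis $\pnt(U)\le S(U)$ is what powers clause (iii). For each nonempty open $U\sub X$ fix, via $\pnt(U)\le S(U)$, a $\pi$-base $\B_U$ of $U$ with $\nt(\B_U)\le S(U)$ --- so every nonempty open subset of $U$ lies below fewer than $S(U)$ members of $\B_U$ --- and stipulate that at round $n$ player {\small NONEMPTY} always picks $V_n$ from $\B_{U_n}$. Then, once {\small EMPTY} responds with any $U_{n+1}\sub V_n$, the collection of \emph{candidates} for ``which member of $\B_{U_n}$ was $V_n$'', namely $\{B\in\B_{U_n}:U_{n+1}\sub B\}$, has size less than $S(U_n)$: a set-sized search. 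One also wants the members of $\B_U$ tagged by words from a fixed countable alphabet in such a way that below every nonempty open subset of $U$ there sit members carrying any prescribed tag --- a routine arrangement once isolated points are discarded (those make the game trivial), since in a crowded space a $\pi$-base has no $\sub$-minimal members, hence infinitely many members below every nonempty open, and it may therefore be split into countably many $\pi$-bases without disturbing $\nt(\B_U)\le S(U)$. Thus whenever {\small NONEMPTY} wishes to record a finite word while playing below an open set, she can choose a move in the appropriate $\B_U$ below that set whose tag encodes the word.

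The crux --- and the step I expect to be the genuine obstacle --- is to set up the tagging and the shadow bookkeeping so that the true $V_n$, equivalently the true shadow position, is \emph{uniquely} recovered from its fewer-than-$S(U_n)$ competitors. The natural route is to make the word recorded at round $n$ \emph{self-certifying}: it should encode, in nested form, not a bare shadow history but a history carrying the incremental data only the genuine previous play can own --- the real move $U_n$, the word recorded at round $n-1$, and so on down the line --- so that at round $n+1$ player {\small NONEMPTY} can run through the (set-many) members of $\B_{U_n}$ above $U_{n+1}$, throw out each one whose recorded word fails to parse as an internally coherent history compatible with the moves she actually sees, and be left with $V_n$ alone. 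The delicacy is twofold: the words must be drawn from a countable alphabet (so open sets cannot be written into them literally; the shadow sets must instead be specified by canonical recipes from the real moves and from $\s$), and the coherence test must genuinely kill every impostor --- this is where $\nt(\B_{U})\le S(U)$ is essentially the only lever, since it is what keeps the pool of impostors small enough to be combed through, and one cannot instead reach for a ``$\sub$-minimal'' witness because the $\B_U$ need not be well-founded ($\R$, for instance, admits no well-founded $\pi$-base). Once the decoding is nailed down, what remains is bookkeeping: {\small NONEMPTY} takes $\tilde U_{n+1}$ to be a canonical nonempty open subset of $U_{n+1}$ read off from the decoded data, forms the lengthened shadow position, computes $\tilde V_{n+1}=\s(\tilde U_0,\dots,\tilde U_{n+1})$, and plays some $V_{n+1}\in\B_{U_{n+1}}$ with $V_{n+1}\sub\tilde V_{n+1}$ whose tag records the next self-certifying word; round $0$ is handled separately and trivially. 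Together with the cofinality observation of the first paragraph, this yields the desired winning $2$-tactic.
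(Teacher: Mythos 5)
Your overall architecture (shadow play driven by a full-information strategy $\s$, history coded into {\small NONEMPTY}'s moves so that it can be recovered from the pair $(U_{n-1},U_n)$, and the mutual-cofinality argument at the end) matches the paper's, and you have correctly located where the hypothesis $\pnt(U)\le S(U)$ must enter. But the step you yourself flag as ``the genuine obstacle'' --- uniquely recovering $V_n$, hence the shadow history, at round $n+1$ --- is exactly the heart of the proof, and your sketch of it does not work as described. You propose tagging the members of $\B_{U}$ by words over a countable alphabet and then eliminating ``impostors'' among the fewer-than-$S$ candidates $\set{B\in\B_{U_n}}{U_{n+1}\sub B}$ by a coherence test. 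Two problems: first, the decoder at round $n+1$ sees only $U_n$ and $U_{n+1}$, so a tag that specifies the shadow history ``by canonical recipes from the real moves'' is circular --- the earlier real moves are precisely the unavailable data, and a countable alphabet cannot injectively name the (generally uncountably many) possible histories outright; second, no actual criterion is given that is guaranteed to kill every impostor, and it is not clear one exists, since a wrong candidate $B\supseteq U_{n+1}$ could carry a perfectly well-formed tag belonging to a different play that also passes through $U_n$ and $U_{n+1}$.

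The paper's resolution is different in kind and removes the need for any elimination step: it makes the decoding unambiguous by \emph{disjointness} rather than by search. After first refining to a maximal cellular family on which $S$ is locally constant (so that each relevant $S(V)$ is $2$ or infinite, hence closed under finite subsets), one builds a single $\pi$-base $\B$ with $\card{\set{W\in\B}{V\sub W}}<S(V)$ for all nonempty open $V$, and for each $V\in\B$ an \emph{injection} $\psi_V:[\set{W\in\B}{V\sub W}]^{<\w}\to\B$ whose range is a \emph{cellular family in $V$} --- this is exactly what $\card{[\set{W\in\B}{V\sub W}]^{<\w}}<S(V)$ buys. The history through round $n$ is the finite set $\{\pi(U_0),\dots,\pi(U_n)\}$ of predecessors of $\pi(U_n)$ in $\B$, and {\small NONEMPTY} plays inside $\widehat U_n=\psi_{\pi(U_n)}(\{\pi(U_0),\dots,\pi(U_n)\})$. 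At the next round $U_{n+1}\sub V_n\sub\widehat U_n$ meets exactly one member of the pairwise disjoint family $\mathrm{range}(\psi_{\pi(U_n)})$, so $\widehat U_n$, and hence by injectivity the entire history, is read off instantly --- no candidates to comb through and no self-certification needed. This cellular-family encoding is the missing idea; without it (or something playing the same role) your proof does not go through.
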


\begin{proof}
To begin, first observe that if $U,V \sub X$ are open, then $V \sub U$ implies $S(V) \leq S(U)$. It follows that for every nonempty open $U \sub X$, there is some nonempty $V \sub U$ such that $S(V) = S(W)$ for all nonempty open $W \sub V$. (Otherwise we could find a decreasing sequence $U \supseteq V_1 \supseteq V_2 \supseteq \dots$ such that $S(V_{n+1}) < S(V_n)$ for all $n$, which is absurd.)

Using this fact, a straightforward application of Zorn's Lemma shows that there is a maximal cellular family $\C$ such that every $U \in \C$ has the property that $S(U) = S(V)$ for all nonempty open $V \sub U$. Fix some such $\C$. For every $U \in \C$, let $\B_U$ be a $\pi$-base for $U$ witnessing the inequality $\pnt(U) \leq S(U)$; in other words, $\B_U$ is a $\pi$-base for $U$ such that 
$$\card{\set{W \in \B_U}{V \sub W}} < S(U) = S(V) \text{ for all nonempty open } V \sub U.$$
Let $\B = \bigcup \set{\B_U}{U \in \C}$. Because $\C$ is maximal, $\B$ is a $\pi$-base for $X$. Because each nonempty $V \sub X$ is contained in at most one $U \in \C$,
\begin{equation}
\card{\set{W \in \B}{V \sub W}} < S(V) \text{ for all nonempty open } V \sub X.
\tag{$*$}
\end{equation}

Note that for every $V \in \B$, either $S(V)$ is infinite or $S(V) = 2$. (The case $S(V) = 2$, meaning that $V$ contains no two disjoint nonempty open sets, occurs in Hausdorff spaces if and only if $V$ consists of a single isolated point; but in non-Hausdorff spaces, there are other ways this can happen.) 
To see this, note that if $S(V) = n > 2$ and $\C = \{W_1,\dots,W_{n-1}\}$ is a cellular family in $V$, then each $W_i$ must have $S(V) = 2$, since otherwise we could replace $W_i$ in $\C$ with $\geq\! 2$ disjoint nonempty open subsets of $W_i$ to obtain a cellular family in $V$ of size $\geq\! n$, contradicting $S(V) = n$. Thus $S(W_i) = 2 < S(V)$ and by construction, no such $V$ is in $\B$.

For each $V \in \B$, fix an injective function $\psi_V: [\set{W \in \B}{V \sub W}]^{<\w} \to \B$ such that the image of $\psi_V$ is a cellular family in $V$. The existence of such an injection follows from the previous paragraph together with property $(*)$, which together imply $\card{\vphantom{f^{f^f}}[\set{W \in \B}{V \sub W}]^{<\w}} < S(V)$. 
Also fix a function $\pi$ from the collection of all nonempty open subsets of $X$ into $\B$ with the property that $\pi(U) \sub U$ for all $U$.

Suppose $\s$ is a winning strategy for {\small NONEMPTY} in $\mathrm{BM}(X)$. We now construct a winning $2$-tactic $\varsigma$ for {\small NONEMPTY} by describing how {\small NONEMPTY} should respond to any possible sequence of plays in $\mathrm{BM}(X)$. 

To begin the game, {\small EMPTY} plays some nonempty open $U_0 \sub X$. Let $\widehat U_0 = \psi_{\pi(U_0)}(\{\pi(U_0)\})$, and define $\varsigma(U_0) = \s(\widehat U_0)$. We write $V_0 = \varsigma(U_0)$, and {\small NONEMPTY} plays $V_0$ to complete round $0$ of $\mathrm{BM}(X)$. 

To begin round $1$, {\small EMPTY} plays some $U_1 \sub V_0$. Similarly to round $0$, let $\widehat U_1 = \psi_{\pi(U_1)}(\{\pi(U_0),\pi(U_1)\})$, and define $\varsigma(U_0,U_1) = \sigma(\widehat U_0,V_0,\widehat U_1)$. (Note that $\widehat U_0$, $V_0$, and $\widehat U_1$ are all functions of $U_0$ and $U_1$.) We write $V_1 = \varsigma(U_0,U_1)$, and {\small NONEMPTY} plays $V_1$ to complete round $1$ of $\mathrm{BM}(X)$.

To begin round $2$, {\small EMPTY} plays some $U_2 \sub V_1$, and then {\small NONEMPTY} must respond based only on {\small EMPTY}'s two previous moves, $U_1$ and $U_2$. Observe that $U_2 \sub V_1 \sub \widehat U_1$; hence $U_2$ is contained in exactly one member of the cellular family $\mathrm{range}(\psi_{\pi(U_1)})$, namely $\widehat U_1$. But $\psi_{\pi(U_1)}$ is injective, and $\psi_{\pi(U_1)}^{-1}(\widehat U_1) = \{\pi(U_0),\pi(U_1)\}$. Thus {\small NONEMPTY} is able to recover $\pi(U_0)$ by knowing $U_1$ and $U_2$. With this information {\small NONEMPTY} is able to reconstruct $\widehat U_0$ and $V_0$ as well (by simulating the gameplay described in the previous two paragraphs). The rest proceeds just as in round $1$: let $\widehat U_2 = \psi_{\pi(U_2)}(\{\pi(U_0),\pi(U_1),\pi(U_2)\})$, and define $\varsigma(U_1,U_2) = \sigma(\widehat U_0,V_0,\widehat U_1,V_1,\widehat U_2)$. We write $V_2 = \varsigma(U_1,U_2)$, and {\small NONEMPTY} plays $V_2$ to complete round $2$.

All later rounds are played in a similar fashion. {\small EMPTY} plays some $U_n \sub V_{n-1}$ to begin round $n$, and then {\small NONEMPTY} must respond based only on {\small EMPTY}'s two previous moves, $U_{n-1}$ and $U_n$. But as before, $U_n$ is contained in exactly one member of the cellular family $\mathrm{range}(\psi_{\pi(U_{n-1})})$, namely $\hat U_{n-1}$. As $\psi_{\pi(U_{n-1})}^{-1}(\widehat U_{n-1}) = \{\pi(U_0),\pi(U_1),\dots,\pi(U_{n-1})\}$, {\small NONEMPTY} is able to recover the sets $\pi(U_0),\pi(U_1),\dots,\pi(U_{n-2})$ by knowing only $U_{n-1}$ and $U_n$. With this information {\small NONEMPTY} is able to reconstruct $\widehat U_i$ and $V_i$ for all $i < n$. Then, we let $\widehat U_n = \psi_{\pi(U_n)}(\{\pi(U_0),\pi(U_1),\dots,\pi(U_n)\})$, and define $\varsigma(U_{n-1},U_n) = \s(\widehat U_0,V_0,\dots,\widehat U_{n-1},V_{n-1},\widehat U_n)$. We write $V_n = \varsigma(U_{n-1},U_n)$, and {\small NONEMPTY} plays $V_n$ to complete round $n$ of $\mathrm{BM}(X)$.

To see that $\varsigma$ is a winning strategy for {\small NONEMPTY}, we must show that $\bigcap_{n \in \w} U_n = \bigcap_{n \in \w} V_n = \bigcap_{n \in \w}\widehat U_n \neq \0$. To see this, consider the play of $\mathrm{BM}(X)$ where {\small EMPTY} plays $\widehat U_n$ in round $n$, and {\small NONEMPTY} responds by playing $V_n$. This is clearly a valid play of $\mathrm{BM}(X)$, and as $V_n = \s(\widehat U_0,V_0,\dots,\widehat U_{n-1},V_{n-1},\widehat U_n)$ for all $n$, {\small NONEMPTY} plays this game according to the winning strategy $\s$. Thus {\small NONEMPTY} wins this play of $\mathrm{BM}(X)$, and this means $\bigcap_{n \in \w} V_n = \bigcap_{n \in \w}\widehat U_n \neq \0$.
\end{proof}

Let us point out that the hypothesis of Theorem~\ref{thm:galvin} can be weakened slightly: our proof shows that it is enough that the collection of all nonempty open $U \sub X$ with $\pnt(U) \leq S(U)$ forms a $\pi$-base for $X$.

\vspace{3mm}

\begin{definition}
Let $\PP$ be a partially ordered set.
We write $q \leq p$ to mean that $q$ extends $p$.
$\PP$ is called \emph{separative} if for all $p,q \in \PP$, if $q \not\leq p$ then there is some $r \leq q$ such that $r$ and $p$ are incompatible (denoted $r \perp p$).

A poset $\PP$ has the $\k$-cc if every antichain in $\PP$ has size less than $\k$. 
The \emph{Souslin number} of $\PP$, denoted $S(\PP)$, is defined as
$$S(\PP) = \min \set{\k}{\PP \text{ has the }\k\text{-cc}.}$$
A subset $\DD$ of $\PP$ is \emph{dense} if for every $p \in \PP$, there is some $q \in \DD$ with $q \leq p$.
The \emph{Noetherian type} of any $\DD \sub \PP$, denoted $\nt(\DD)$, is 
$$\nt(\DD) = \min \set{\k}{\text{for all } q \in \PP, \card{\set{p \in \DD}{q \leq p}} < \k}.$$
The $\pi$-Noetherian type of $\PP$ is
$$\pnt(\PP) = \min \set{\nt(\DD)}{\DD \text{ is dense in }\PP}.$$
\end{definition}

\vspace{2mm}

Note that the $\pi$-Noetherian type and Souslin number of a regular space $X$, as defined above, are the $\pi$-Noetherian type and Souslin number, respectively, of the partial order of open subsets of $X$, ordered by inclusion.

\begin{definition}
For any partially ordered set $\PP$, let $\axiom(\PP)$ denote the statement
$\pnt(\PP) \leq S(\PP)$.
If $\mathcal K$ is a class of partial orders, then $\axiom(\mathcal K)$ denotes the statement that $\axiom(\PP)$ holds for all $\PP \in \mathcal K$. The symbol \axiom abbreviates the statement $\axiom(\mathrm{separative})$.
\end{definition}

\begin{remark}
  The statement $\axiom(\text{all posets})$ is false. For example, any ordinal $\a$, ordered by $\geq$, has $S(\a) = 2$ and $\pnt(\a) = \cf(\a)$. To get examples with larger Souslin number, consider a union of incompatible chains:
  $\k \times \lambda$ with $(\a,\b)\leq(\a',\b')$ if and only if $\a=\a'$ and $\b\geq\b'$.
  With respect to this ordering, $S(\k \times \lambda) = \k^+$ and $\pnt(\k \times \lambda) = \cf(\lambda)$.
\end{remark}

\begin{remark}\label{rem:countable=dumb}
The statement $\axiom(\text{separative}+\text{countable})$ is true. If $\PP$ is a finite separative poset, then every $p \in \PP$ has a minimal extension, and setting $\DD$ equal to the set of all minimal elements of $\PP$ shows $\pnt(\PP) = 2$. As every (nonempty) poset has Souslin number $\geq\! 2$, this shows \axiom$\!\!(\PP)$ holds. If $\PP$ is countably infinite, then separativity implies $S(\PP) = \aleph_1$. But clearly $\pnt(\PP) \leq \card{\PP}^+$ for any poset $\PP$ (by setting $\DD = \PP$), so again \axiom$\!\!(\PP)$ holds.
\end{remark}

\begin{remark}\label{rem:D}
The statement $\axiom(\text{separative}+\text{cardinality}\leq\!\aleph_1)$ is true. If $\card{\PP} = \aleph_1$, write $\PP = \set{p_\a}{\a < \w_1}$ and let 
$\DD = \set{p_\a}{\text{if } \xi < \a \text{ then } p_\xi \not\leq p_\a}.$ 
If $p = p_\a \in \PP$, then letting $\xi \leq \a$ be the least ordinal such that $p_\xi \leq p_\a$, we have $p_\xi \in \DD$. Thus $\DD$ contains an extension of $p$, and as $p$ was arbitrary, this shows $\DD$ is dense in $\PP$. But for any $p_\a \in \PP$, clearly $\set{q \in \DD}{p_\a \leq q} \sub \set{p_\xi}{\xi < \a}$, and so $\nt(\DD) \leq \aleph_1$. Hence $\pnt(\PP) \leq \aleph_1$. As in the previous remark, every infinite separative poset has uncountable Souslin number; it follows that $\pnt(\PP) \leq S(\PP)$.
\end{remark}

These remarks show that $\aleph_2$ is the least cardinality of a separative poset $\PP$ for which \axiom$\!\!(\PP)$ can fail. We will see in Section~\ref{sec:independence} that such a failure is consistent.

The next theorem shows that \axiom is just a translation of the hypotheses of Theorem~\ref{thm:galvin} into the language of posets. 
The ideas used to prove this theorem are entirely standard. There is a large literature concerning infinite games (including the Banach-Mazur game) on partial orders and Boolean algebras \cite{Jech, Foreman, Velickovic}. It is implicit in this literature that the (topological) Banach-Mazur game on a space $X$ is essentially equivalent to the (order-theoretic) Banach-Mazur game on any $\pi$-base $\B$ for $X$, ordered by inclusion. 
The proof of Theorem~\ref{thm:translation} just expresses one aspect of this equivalence.

A topological space $X$ is called \emph{quasi-regular} if it is Hausdorff and, for every nonempty open $U \sub X$, there is a nonempty open $V$ with $\closure{V} \sub U$. (In some places such spaces are called \emph{$\pi$-regular}.) Note that every $T_3$ space is quasi-regular, but there are quasi-regular spaces that are not $T_3$. (For an example, consider the topology on $\R$ generated by the usual topology plus the set $\R \setminus \set{\frac{1}{n}}{n \in \N}$.)

\begin{theorem}\label{thm:translation}
The statement \axiom is equivalent to the statements:
\begin{enumerate}
\item For every Boolean algebra $\BB$, $\pnt(\BB) \leq S(\BB)$.
\item For every Stone space (i.e., every zero-dimensional compact Hausdorff space) $X$, $\pnt(X) \leq S(X)$.
\item For every quasi-regular space $X$, $\pnt(X) \leq S(X)$.
\end{enumerate}
\end{theorem}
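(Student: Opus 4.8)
The plan is to prove the cycle of implications $\axiom \Rightarrow (3) \Rightarrow (2) \Rightarrow (1) \Rightarrow \axiom$, where the middle two implications are essentially trivial and the two endpoints require a small amount of work translating between posets and their associated spaces.

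\smallskip

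\emph{$\axiom \Rightarrow (3)$.} Given a quasi-regular space $X$, consider the poset $\PP$ whose underlying set is $\set{U \in \tau(X)}{U \neq \0}$, ordered by inclusion. The key point is that quasi-regularity makes a suitable dense sub-poset of $\PP$ \emph{separative}: let $\DD = \set{U \in \PP}{\exists V \neq \0 \text{ with } \closure V \sub U}$; by quasi-regularity $\DD$ is dense in $\PP$, and one checks $\DD$ is separative because if $\closure V \sub U_1$ but $V \not\sub U_2$ (as members of $\DD$, with the ordering inherited from $\PP$), then $V \cap (X \setminus \closure{U_2 \cap V})$ contains a nonempty open set of $\DD$ incompatible with $U_2$ --- here Hausdorffness is what guarantees enough open sets to separate. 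Since passing to a dense sub-poset changes neither $\pnt$ nor $S$ (a routine observation: a dense subset of a dense subset is dense, and for the Souslin number one uses that every antichain in $\PP$ can be refined to one in $\DD$ of the same size), $\axiom$ applied to $\DD$ gives $\pnt(\DD) \leq S(\DD)$, hence $\pnt(X) = \pnt(\PP) = \pnt(\DD) \leq S(\DD) = S(\PP) = S(X)$.

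\smallskip

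\emph{$(3) \Rightarrow (2) \Rightarrow (1)$.} The first of these is immediate since every Stone space is compact Hausdorff, hence regular, hence quasi-regular, so $(3)$ applies verbatim. For $(2) \Rightarrow (1)$: given a Boolean algebra $\BB$, let $X = \mathrm{St}(\BB)$ be its Stone space; then the poset of nonempty clopen subsets of $X$ (ordered by inclusion) is isomorphic to $\BB^+ = \BB \setminus \{0\}$ (ordered by $\leq$), and the clopen sets form a $\pi$-base --- indeed a base --- for $X$, so $\pnt(X) = \pnt(\BB)$ and $S(X) = S(\BB)$. Apply $(2)$.

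\smallskip

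\emph{$(1) \Rightarrow \axiom$.} Given a separative poset $\PP$, pass to its Boolean completion $\BB = \ro(\PP)$ (the regular-open algebra, or equivalently the Boolean algebra of regular open subsets of the Stone space of $\PP$). Separativity of $\PP$ is exactly what guarantees that the canonical map $\PP \to \BB^+$ is an order-embedding with dense image. Hence $\PP$ is (isomorphic to) a dense sub-poset of $\BB^+$, so $\pnt(\PP) = \pnt(\BB)$ and $S(\PP) = S(\BB)$ by the same ``dense sub-poset'' observations as above, and $(1)$ gives $\pnt(\BB) \leq S(\BB)$, i.e. $\axiom(\PP)$.

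\smallskip

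\emph{Main obstacle.} None of the individual steps is deep; the work is in stating precisely and verifying the two lemmas that do all the lifting: (a) \emph{passing to a dense sub-poset preserves both $\pnt$ and $S$}, and (b) \emph{separativity is equivalent to embedding densely into a Boolean algebra}, together with the verification that $\DD$ above is separative. Step (b) is genuinely standard (it is the content of the theory of Boolean completions of separative posets, e.g. in Jech), so the only place requiring care is checking separativity of the ``$\closure V \sub U$'' dense sub-poset of a quasi-regular space, where one must be slightly careful that Hausdorffness (rather than full regularity) suffices --- the point being that we only need to separate a point of $\closure V \setminus U_2$-type configurations using the \emph{already-chosen} set $V$, so regularity of $X$ is not needed, only that $V$ itself contains, by quasi-regularity applied inside $V$, sufficiently small open sets.
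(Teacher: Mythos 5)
Your cycle of implications is the same one the paper uses, and three of the four legs --- $(3)\Rightarrow(2)$, $(2)\Rightarrow(1)$ via the Stone space, and $(1)\Rightarrow\axiom$ via the Boolean completion of a separative poset --- match the paper's argument and are fine at the level of detail both you and the paper supply. The problem is the leg $\axiom\Rightarrow(3)$, where your choice of dense sub-poset does not work.

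Your $\DD=\set{U\in\PP}{\exists V\neq\0 \text{ with } \closure V\sub U}$ is, in a quasi-regular space, \emph{all} of $\PP$: quasi-regularity says precisely that every nonempty open $U$ contains such a $V$. So you have not passed to a smaller poset at all, and the poset of all nonempty open subsets of a quasi-regular space is genuinely not separative. Concretely, in $X=\R$ take $U_1=(0,2)$ and $U_2=(0,1)\cup(1,2)$. Then $U_1\not\sub U_2$, but every nonempty open $W\sub U_1$ meets $U_2$, so no extension of $U_1$ is incompatible with $U_2$. Your proposed witness $V\cap(X\setminus\closure{U_2\cap V})$ is empty here: any $V\sub U_1$ with $V\not\sub U_2$ must contain the point $1$, and then $\closure{U_2\cap V}=\closure{V\setminus\{1\}}\supseteq V$. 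Since $\axiom$ is a statement about separative posets only, it cannot be applied to your $\DD$, and the implication $\axiom\Rightarrow(3)$ is not established. The paper's fix is to take $\PP=\ro(X)\setminus\{\0\}$, the nonempty \emph{regular open} sets: this poset is separative (if $U\not\sub V$ with $U,V$ regular open, then $U\cap(X\setminus\closure V)$ is nonempty and $\mathrm{int}(\closure{U\cap(X\setminus\closure V)})$ is a regular open extension of $U$ incompatible with $V$), and quasi-regularity is used not for separativity but to show that the regular open sets form a $\pi$-base, which is what makes $\pnt$ and $S$ transfer between $\ro(X)\setminus\{\0\}$ and $X$. Incidentally, your closing remark that Hausdorffness is the relevant hypothesis is a red herring: separativity of the regular open poset needs no separation axiom at all, and Hausdorffness plays no role in this leg of the argument.
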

\begin{proof}
To see that $(1)$ implies \axiom, let $\PP$ be a separative partial order. Every separative partial order embeds densely into a (unique) complete Boolean algebra, known as its completion. Let $\BB$ denote the completion of $\PP$. By $(1)$, $\pnt(\BB) \leq S(\BB)$. It is straightforward to show that $S(\PP) = S(\BB)$ and $\pnt(\PP) = \pnt(\BB)$, so $\pnt(\PP) \leq S(\PP)$. As $\PP$ was arbitrary, \axiom follows.

To see that $(2)$ implies $(1)$, let $\BB$ be a Boolean algebra and let $X$ denote its Stone space. By $(2)$, $\pnt(X) \leq S(X)$. It is straightforward to show that $S(\BB) = S(X)$ and $\pnt(\BB) = \pnt(X)$, so $\pnt(\BB) \leq S(\BB)$ and $(1)$ follows.

Clearly $(3)$ implies $(2)$, because every compact Hausdorff space is $T_4$ and hence quasi-regular.

To see that \axiom implies $(3)$, let $X$ be a quasi-regular space. Recall that a subset $U$ of $X$ is \emph{regular open} if $U = \mathrm{int}(\closure{U})$. Let $\PP = \ro(X) \setminus \{\0\}$, ordered by inclusion. It is straightforward to check that the quasi-regularity of $X$ implies $\PP$ is separative. Thus, by \axiom, $\pnt(\PP) \leq S(\PP)$. Using the quasi-regularity of $X$ again, it is straightforward to check that $\pnt(\PP) = \pnt(X)$ and $S(\PP) = S(X)$. As $X$ was arbitrary, $(3)$ follows.
%
\end{proof}

\begin{corollary}\label{cor:teeitup}
Suppose \axiom holds. Then for every quasi-regular space $X$, \emph{\small NONEMPTY} has a winning strategy in $\mathrm{BM}(X)$ if and only if \emph{\small NONEMPTY} has a winning $2$-tactic.
\end{corollary}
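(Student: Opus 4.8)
The reverse implication is immediate, since a winning $2$-tactic is by definition a winning strategy, so I would focus entirely on the forward direction. The plan is: assume \axiom, let $X$ be a quasi-regular space on which {\small NONEMPTY} has a winning strategy in $\mathrm{BM}(X)$, verify the hypothesis of Theorem~\ref{thm:galvin} for $X$, and then simply quote that theorem to obtain the winning $2$-tactic.

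The one point to check is that the hypothesis of Galvin's theorem is phrased for \emph{all} nonempty open $U \sub X$, not just for $X$ itself, whereas Theorem~\ref{thm:translation}$(3)$ delivers $\pnt(X) \leq S(X)$ for a quasi-regular space $X$ on the nose. So I would first observe that quasi-regularity is inherited by open subspaces: if $W \sub X$ is nonempty open and $U$ is nonempty open in $W$, then $U$ is open in $X$, so there is a nonempty open $V$ with $\closure{V} \sub U$; since $\closure{V} \sub W$ as well, the closure of $V$ computed in the subspace $W$ is $\closure{V} \cap W = \closure{V} \sub U$, whence $W$ is quasi-regular. Moreover, because $U$ is open in $X$, the nonempty open subsets of the subspace $U$ are precisely the nonempty open subsets of $X$ contained in $U$, so $\pnt$ and $S$ of the subspace $U$ agree with the corresponding invariants of the poset of nonempty open subsets of $X$ lying below $U$.

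With these observations in hand the argument closes quickly: by Theorem~\ref{thm:translation}, statement $(3)$ follows from \axiom, so $\pnt(U) \leq S(U)$ for every quasi-regular space, and in particular for every nonempty open $U \sub X$. This is exactly the hypothesis of Theorem~\ref{thm:galvin}, which therefore produces a winning $2$-tactic for {\small NONEMPTY} in $\mathrm{BM}(X)$. There is no genuine obstacle here; every step is either a direct appeal to an already-established theorem or an elementary topological fact, and the only thing requiring (minimal) care is the routine verification that quasi-regularity passes to open subspaces so that Galvin's hypothesis is met everywhere, not merely at $X$.
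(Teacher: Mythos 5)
Your proof is correct and follows exactly the route the paper takes: the paper's proof is simply ``This follows immediately from Theorems~\ref{thm:galvin} and~\ref{thm:translation}.'' The only thing you add is the (correct and worthwhile) explicit verification that quasi-regularity is inherited by open subspaces, which is precisely the detail the paper leaves implicit in bridging Theorem~\ref{thm:translation}$(3)$, stated for $X$ itself, to the hypothesis of Theorem~\ref{thm:galvin}, which quantifies over all nonempty open $U \sub X$.
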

\begin{proof}
This follows immediately from Theorems \ref{thm:galvin} and \ref{thm:translation}.
\end{proof}

The ``quasi-regular'' hypothesis in Theorem~\ref{thm:translation} cannot be omitted. To see this, note that \axiom will be shown to be consistent in the next section, and yet there are unconditional \zfc examples of spaces $X$ (necessarily not quasi-regular) with $S(X) < \pnt(X)$. An easy $T_1$ is example is the co-finite topology on an infinite set $X$, where $S(X) = 2$ and $\pnt(X) = |X|$. The following example shows a $T_2$ (in fact, Urysohn and completely Hausdorff) space $X$ with $S(X) < \pnt(X)$. The example is a modification of a space described by Debs \cite[p. 235]{Telgarsky}; a related example with the same properties was communicated to the fourth author by Bill Fleissner. 

\begin{example}
Let $\k$ be an uncountable regular cardinal, and let $X = 2^\k$. Let $\s$ denote the usual product topology on $X$. Define a new topology $\t$ on $X$ by declaring $V \in \t$ if and only if $V = U \setminus A$ for some $U \in \s$ and some $A \sub X$ with $|A| \leq \k$. This topology $\t$ is Hausdorff, and in fact Urysohn and completely Hausdorff, because $\s$ has these properties and all these properties are preserved by refinement. Note that any cellular family $\C$ in $\t$ gives rise to a cellular family of the same size in $\s$, namely $\set{\mathrm{int}_{\s}(\closure{V}^\s)}{V \in \C}$. Because $\s$ has the ccc, this means $\t$ also has the ccc; or in other words, $S(X_\t) = \aleph_1$. Now, we claim $\pnt(X_\t) > \k$. To see this, let $\B$ be a $\pi$-base for $(X,\t)$. Observe that if $\set{U_\a \setminus A_\a}{\a < \k}$ is any $\k$-sized subset of $\t$, then it is not a $\pi$-base, because no member of this collection is a subset of the nonempty $\t$-open set $X \setminus B$, where $B$ is any $\k$-sized, dense (with respect to $\s$) subset of $X$ disjoint from $\bigcup_{\a < \k}A_\a$. Thus $|\B| > \k$. For each $V \in \B$, fix $U_V \in \s$ such that $V = U_V \setminus A$ for some $A \sub X$ with $|A| \leq \k$. Let $\A$ denote the standard basis of clopen subsets of $(X,\s)$. As $|\A| = \k$, there is by the pigeonhole principle some $U \in \A$ such that $\card{\set{V \in \B}{U \sub U_V}} > \k$. Enumerate some $\k$-sized subset $\set{U_\a \setminus A_\a}{\a < \k}$ of $\B$ such that $U \sub U_\a$ for all $\a$, and let $W = U \setminus \bigcup_{\a < \k}A_\a$. Clearly $\0 \neq W \in \t$, and $\card{\set{V \in \B}{W \sub V}} \geq \k$. Hence $\nt(\B) > \k$. As $\B$ was an arbitrary $\pi$-base for $(X,\t)$, this shows $\pnt(X_\t) > \k$.
\end{example}

\section{Higher Davies trees, and the consistency of \axiom}\label{sec:V=L}

A Davies tree is a sequence $\seq{M_\a}{\a < \mu}$ of elementary submodels of some large fragment $H_\theta$ of the set-theoretic universe such that the $M_\a$ enjoy certain coherence and covering properties. (These sequences are called ``trees'' because they are usually constructed by enumerating the leaves of a tree of elementary submodels of $H_\theta$.) These structures provide a unified framework for carrying out a wide variety of constructions in infinite combinatorics. They were introduced by R. O. Davies in \cite{Davies}, and an excellent survey of their many uses can be found in Daniel and Lajos Soukup's paper \cite{Soukup^2}.

Also in \cite{Soukup^2}, the Soukups construct a countably closed version of a Davies tree called a ``high Davies tree.'' Their construction uses $\gch +\square$ and is rather intricate. In this section, we exposit a simpler construction that also produces high Davies trees (and in fact, the stronger version called ``sage'' Davies trees), this time using \gch and a parametrized version of the Very Weak Square principle introduced by Foreman and Magidor in \cite{Foreman&Magidor}. We note that while our construction is simpler, the proof that the construction actually works is fairly involved, so that we are not really getting high/sage Davies trees for less work overall. 
Rather, the advantage of our construction is that it generalizes readily to higher cardinals, so that we are able to obtain $\k$-closed versions of high Davies trees for uncountable $\k$. We call these structures ``higher'' Davies trees. Our primary motivation for constructing these higher Davies trees is that, while the existence of high Davies trees enable us to prove \axiom$\!\!(\text{ccc}+\text{separative})$, the higher versions seem to be required for handling posets with larger Souslin number.

We begin this section by defining our generalization of high Davies trees. We then show in Theorem~\ref{thm:kcc} that the existence of these higher Davies trees implies \axiom, and thus, via Corollary~\ref{cor:teeitup}, the failure of Telg\'arsky's conjecture.
After this, we show how to construct the higher Davies trees using \gch plus a weakening of $\square$. 

In what follows, $H_\theta$ denotes the set of all sets hereditarily smaller than some very big cardinal $\theta$. 
Given two sets $M$ and $N$, we write $M \prec N$ to mean that $(M,\in)$ is an elementary submodel of $(N,\in)$.

A set $M$ is called \emph{$<\!\k$-closed} if $M^{<\k} \sub M$. If $M$ satisfies (enough of) \zfc, this is equivalent to the property $[M]^{<\k} \sub M$. The following two facts will be used in what follows: If $M \prec H_\theta$ and $M$ is $<\!\k$-closed, then
\begin{itemize}
\item[$\circ$] $\k \sub M$, and
\item[$\circ$] if $p \in M$ and $\card{p} \geq \k$, then $\card{p \cap M} \geq \k$.
\end{itemize}

\begin{definition}\label{def:tree}
Let $\k,\mu$ be infinite cardinals and let $p$ be some set. A \emph{$\k$-high Davies tree for $\mu$ over $p$} is a sequence $\seq{M_\a}{\a < \mu}$ of elementary submodels of $(H_\theta,\in)$, for some ``big enough'' regular cardinal $\theta$, such that
\begin{enumerate}
\item $p \in M_\a$, $M_\a$ is $<\!\k$-closed, and $\card{M_\a} = \k$ for all $\a < \mu$.
\item $[\mu]^{<\k} \sub \bigcup_{\a < \mu}M_\a$.
\item For each $\a < \mu$, there is a set $\mathcal N_\a$ of elementary submodels of $H_\theta$ such that $\card{\mathcal N_\a} < \k$, each $N \in \mathcal N_\a$ is $<\!\k$-closed and contains $p$, and
$$\textstyle \bigcup_{\xi < \a}M_\xi \,=\, \bigcup \mathcal N_\a.$$
\end{enumerate}
\end{definition}

Setting $\k = \aleph_0$, an $\aleph_0$-high Davies tree for $\mu$ over $p$ is just called a \emph{Davies tree for $\mu$ over $p$}. The existence of these objects is a theorem of \zfc for any value of $\mu$ and any parameter $p$ \cite[Theorem 3.1]{Soukup^2}.
Assuming \ch and setting $\k = \aleph_1$, an $\aleph_1$-high Davies tree for $\mu$ over $p$ is called a \emph{high Davies tree for $\mu$ over $p$}. The existence of these trees is independent of \zfc: their existence is guaranteed by $\gch+\square$ for any values of $\mu$ and $p$ \cite[Theorem 8.1]{Soukup^2}, but it is consistent (relative to a supercompact cardinal) that there are no high Davies trees for any $\mu > \aleph_\w$ \cite[Corollary 9.2]{Soukup^2}.

We postpone the proof that the existence of $\k$-high Davies trees is consistent until later in this section, and turn now to the relatively short proof that their existence implies \axiom.

Given a poset $\PP$ and $Q \sub \PP$, write $Q \down \, = \set{p \in \PP}{p \leq q \text{ for all } q \in Q}$. 

\begin{lemma}\label{lem:kcc}
Let $\PP$ be a separative partial order with the $\k$-cc, and let $Q \sub \PP$. There is some $Q' \sub Q$ with $\card{Q'} < \k$ such that $Q' \down \, = Q \down$.
\end{lemma}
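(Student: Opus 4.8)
The plan is to produce $Q'$ as the union of an increasing, continuous chain $\seq{Q_\a}{\a \leq \lambda}$ of subsets of $Q$, halting as soon as the chain ``catches up'' to $Q$ in the sense that $Q_\a\down \, = Q\down$; the $\k$-cc will force this to happen at some stage $\lambda < \k$. The first observation is that there is only one inclusion to worry about: if $Q' \sub Q$, then automatically $Q\down \, \sub Q'\down$, since a smaller set of elements imposes fewer constraints on common lower bounds. So all we must arrange is that $Q'\down \, \sub Q\down$, i.e.\ that every common lower bound of $Q'$ is already a common lower bound of $Q$.

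The recursion runs as follows. Set $Q_0 = \0$, take unions at limit stages, and at stage $\a$: if $Q_\a\down \, = Q\down$, halt and put $Q' = Q_\a$; otherwise choose $p_\a \in Q_\a\down \setminus Q\down$, which (by definition of $Q\down$) gives some $q_\a \in Q$ with $p_\a \not\leq q_\a$, and then (by separativity of $\PP$) choose $r_\a \leq p_\a$ with $r_\a \perp q_\a$; finally set $Q_{\a+1} = Q_\a \cup \{q_\a\}$. Since every $q_\a$ lies in $Q$ and $Q_0 \sub Q$, each $Q_\a \sub Q$, and $\card{Q_\a} \leq \card{\a}$.

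The crux is the claim that $\set{r_\a}{\a < \lambda}$ is an antichain in $\PP$. Fix stages $\a < \b$ that the recursion actually reaches. Then $q_\a$ was adjoined at stage $\a+1 \leq \b$, so $q_\a \in Q_\b$; since $p_\b \in Q_\b\down$, this forces $p_\b \leq q_\a$, and hence $r_\b \leq p_\b \leq q_\a$. Now any common extension $s \leq r_\a$, $s \leq r_\b$ would satisfy $s \leq r_\b \leq q_\a$, making $s$ a common extension of $r_\a$ and $q_\a$ --- impossible, as $r_\a \perp q_\a$. Thus $r_\a \perp r_\b$. Since $\PP$ has the $\k$-cc it has no antichain of size $\k$, so the recursion cannot pass through all ordinals below $\k$; it halts at some $\lambda < \k$, and then $Q' = Q_\lambda \sub Q$ satisfies $\card{Q'} \leq \card{\lambda} < \k$ and $Q'\down \, = Q\down$, as required.

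The only step that requires any care is the antichain verification, and its engine is the small observation above: once $q_\a$ enters the chain it ``defeats'' every later common lower bound $p_\b$ (forcing $p_\b \leq q_\a$, hence $r_\b \leq q_\a$), so $r_\b$ can never be compatible with the earlier witness $r_\a$. The remaining ingredients --- the continuity of the chain, the appeal to separativity to manufacture the witnesses $r_\a$, and the cardinality bookkeeping --- are entirely routine.
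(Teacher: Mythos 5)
Your proof is correct and follows essentially the same route as the paper's: both arguments build a sequence $q_\a \in Q$ whose initial segments have strictly decreasing down-sets, pick witnesses $p_\a$ below all earlier $q_\xi$ but not below $q_\a$, use separativity to get $r_\a \leq p_\a$ incompatible with $q_\a$, and observe that the $r_\a$ form an antichain, so the $\k$-cc stops the process before stage $\k$. The only cosmetic difference is that you run the construction as a direct recursion that halts, whereas the paper phrases it as a proof by contradiction.
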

\begin{proof}
Aiming for a contradiction, suppose $Q \sub \PP$ has cardinality at least $\k$, and that there is no $Q' \sub Q$ with $\card{Q'} < \k$ such that $Q' \down\, = Q \down$. We may then find a sequence $\seq{q_\a}{\a < \k}$ of members of $Q$ such that 
$$\set{q_\xi}{\xi < \a} \down\, \neq \set{q_\xi}{\xi < \b} \down \ \text{ whenever } \ \a < \b < \k.$$ 
For each $\a < \k$, fix some 
$p_\a \in (\set{q_\xi}{\xi < \a} \down) \setminus (\set{q_\xi}{\xi < \a+1} \down).$
Then $p_\a \leq q_\xi$ for all $\xi < \a$, but $p_\a \not\leq q_\a$. By separativity, there is some $r_\a \leq p_\a$ such that $r_\a \perp q_\a$. But then
$\set{r_\a}{\a < \k}$
is an antichain in $\PP$, because if $\a < \b$ then $r_\a \perp q_\a$ while $r_\b \leq q_\a$, which implies $r_\a \perp r_\b$.
\end{proof}

\begin{theorem}\label{thm:kcc}
Let $\PP$ be a separative poset with $S(\PP) = \k$. If there is a $\k$-high Davies tree for some $\mu \geq |\PP|$, then \axiom$\!\!(\PP)$ holds. 
\end{theorem}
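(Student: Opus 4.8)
The plan is to use the Davies tree to run, for a poset of arbitrary size, the elementary-submodel bookkeeping that Remark~\ref{rem:D} carries out for posets of size $\aleph_1$. Fix $\mu \geq |\PP|$ and a $\k$-high Davies tree $\seq{M_\a}{\a < \mu}$ over a parameter $p$ that codes $\PP$, the order $\leq_\PP$, and a surjection $e \colon \mu \to \PP$, and for each $\a$ write $\bigcup_{\xi < \a} M_\xi = \bigcup \mathcal N_\a$ as in Definition~\ref{def:tree}(3). Since $[\mu]^{<\k} \sub \bigcup_{\a<\mu} M_\a$ we get $\mu \sub \bigcup_{\a<\mu} M_\a$, and then $e \in M_\a$ gives $\PP \sub \bigcup_{\a<\mu} M_\a$; for $q$ in this union let $\rk(q)$ be the least $\a$ with $q \in M_\a$.

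Construct a dense $\DD = \bigcup_{\a < \mu} \DD_\a$ with $\DD_\a \sub \PP \cap M_\a$ by recursion on $\a$: having defined $\DD_{<\a} = \bigcup_{\xi < \a} \DD_\xi$, put into $\DD_\a$ exactly those $p \in \PP \cap M_\a$ that are \emph{unhandled at stage $\a$}, meaning that no $q \in \DD_{<\a}$ satisfies $q \leq p$. Density is immediate: for $p \in \PP$, if $p$ is unhandled at stage $\rk(p)$ then $p \in \DD$, and otherwise $\DD_{<\rk(p)}$ already contains an extension of $p$. If $q \in \DD$ with $\rk(q) = \a$, then $q$ has no extension in $\DD_{<\a}$ (else $q \notin \DD_\a$, and no later stage can repair this), so $\a$ is the unique stage at which $q$ enters $\DD$; write $\a(q) = \rk(q)$.

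It remains to show $\nt(\DD) \leq \k$, i.e.\ $\card{Q} < \k$ where $Q = \set{q \in \DD}{s \leq q}$ and $s \in \PP$ is arbitrary. Two observations shrink $Q$. First, each $q \in Q$ weakens $s$, so any two members of $Q$ have common lower bound $s$; thus $Q$ and each set $Q \cap N$ is centered, and Lemma~\ref{lem:kcc} applies to it. Second, as soon as $\DD$ contains some extension of $s$, that extension also extends every weakening of $s$, so no further member of $Q$ is added; hence every $q \in Q$ has $\a(q) \leq \g_s$, where $\g_s \leq \rk(s)$ is the first stage at which $\DD$ contains an extension of $s$. Therefore $Q \sub \bigcup_{\b \leq \g_s} M_\b = \bigcup \mathcal N_{\g_s+1}$, so $Q$ meets fewer than $\k$ of the $<\k$-closed elementary submodels $N \in \mathcal N_{\g_s+1}$, and it is enough to bound $\card{Q \cap N}$ for each. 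If $\card{Q \cap N} \geq \k$, Lemma~\ref{lem:kcc} yields $R \sub Q \cap N$ with $\card R < \k$ and $R\down = (Q \cap N)\down$; then $R \in N$ (by $<\k$-closure of $N$), and $R\down \neq \0$ since it contains $s$, so by elementarity some $t \in N$ satisfies $t \leq q$ for all $q \in Q \cap N$. But $\rk(t) \leq \g_s$, and processing $t$ at stage $\rk(t)$ puts an extension of $t$ — hence of every $q \in Q \cap N$ — into $\DD$ at a stage $\leq \rk(t)$, contradicting the unhandledness, when they were added, of those $q \in Q \cap N$ with $\a(q) > \rk(t)$. A short induction on the stage-rank of $t$ drives $\card{Q \cap N}$ below $\k$, and then regularity of $\k$ gives $\card Q < \k$.

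The main obstacle is precisely the respect in which a $\k$-high Davies tree falls short of a continuous $\in$-chain of elementary submodels: the ``past'' $\bigcup_{\xi < \a} M_\xi$ is not itself an elementary submodel, so the one-model reflection that suffices in the size-$\aleph_1$ case of Remark~\ref{rem:D} is unavailable. Clause (3) of Definition~\ref{def:tree} is what repairs this, exhibiting the past as a union of $<\k$-many genuine $<\k$-closed elementary submodels; Lemma~\ref{lem:kcc} is what makes reflection usable inside each such submodel, by producing a bounded core whose lower bound is visible at an early stage. The remaining issues are bookkeeping: arranging that the induction in the previous paragraph terminates, coping with a singular $S(\PP)$ (where a $<\k$-cover need not concentrate on a single piece), and choosing $\mu$ — for instance of cofinality $\geq \k$ — so that suprema of $<\k$-many ordinals below $\mu$ stay below $\mu$.
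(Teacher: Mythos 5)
Your overall architecture---cover $\bigcup_{\xi<\a}M_\xi$ by the $<\!\k$ many $<\!\k$-closed models $N\in\mathcal N_\a$ from Definition~\ref{def:tree}(3), apply Lemma~\ref{lem:kcc} inside each $N$ to extract a $<\!\k$-sized core $R\in N$, and use elementarity to reflect a lower bound $t$ of $Q\cap N$ into $N$---is exactly the paper's, and that part is sound. The gap is in the construction of $\DD$, and it is fatal rather than bookkeeping. Your recursion admits at stage $\a$ \emph{every} $p\in\PP\cap M_\a$ not extended by something in $\DD_{<\a}$, with no check against the other elements admitted at the same stage; in particular $\DD_0=\PP\cap M_0$. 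If some $t\in\PP\cap M_0$ has $\k$ many weakenings inside $M_0$, then $\card{\set{q\in\DD}{t\le q}}\ge\k$ and the conclusion $\nt(\DD)\le\k$ is simply false for your $\DD$. This happens, e.g., for the random real algebra with $\k=\aleph_1$: since $M_0$ is $<\!\k$-closed of size $\k$ with $\PP\in M_0$, it contains the class of $[0,1/2]$ together with uncountably many of its weakenings $[0,r]$. The same defect is what blocks your ``induction on the stage-rank of $t$'': when the reflected lower bound $t$ is itself unhandled at stage $\rk(t)$, all members of $Q\cap N$ entering at that very stage survive the argument, and nothing bounds their number, so the descent has no valid base case.

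The missing idea is to refine the stage decomposition to a genuine well-order $\sqsubset$ of $\PP$ such that (i) elements of $\bigcup_{\xi\le\a}M_\xi$ precede all elements outside it, and (ii) the new elements $\PP\cap(M_\a\setminus\bigcup_{\xi<\a}M_\xi)$ are enumerated in order type $\le\k$ (possible since $\card{M_\a}=\k$); then set $\DD=\set{p\in\PP}{\text{no }q\sqsubset p\text{ extends }p}$. For $q\in\DD$ with $p\le q$ one gets $q\sqsubseteq p$, so $\set{q\in\DD}{p\le q}$ meets $p$'s own stage in a set of size $<\k$ by (ii), and the entire weight of the argument falls on $\bigcup_{\xi<\a}M_\xi$, where your reflection step applies verbatim. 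Instead of a descent, take a $\sqsubset$-minimal $p$ with $\card{\set{q\in\DD}{p\le q}}\ge\k$; the reflected $\bar p$ lies in $\bigcup_{\xi<\a}M_\xi$, hence $\bar p\sqsubset p$ strictly by (i), contradicting minimality. Your remaining two worries are non-issues: $S(\PP)$ is always regular (Erd\H{o}s--Tarski, as invoked for Corollary~\ref{cor:trees}), and no cofinality hypothesis on $\mu$ is needed because the argument only ever examines the initial segment $\bigcup_{\xi<\a}M_\xi$ at the single stage $\a$ where the minimal counterexample first appears.
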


\begin{proof}
%
Fix a cardinal $\mu \geq |\PP|$, and let $\seq{M_\a}{\a < \mu}$ be a $\k$-high Davies tree for $\mu$ over $\PP$. 
Formally, we may take the members of $\PP$ to be ordinals $< \!\mu$; this ensures, via property $(2)$ in Definition~\ref{def:tree}, that $\PP \sub \bigcup_{\a < \mu}M_\a$. 
(Informally, we avoid this identification: the letters $\a,\z,\xi$ will be reserved for ordinals as such, and not used for members of $\PP$.)

Fix a well-ordering $\sqsubset$ of $\PP$ such that for each $\a < \mu$,
\begin{itemize}
\item[$\circ$] If $p \in \PP \cap M_\a$ and $q \notin \bigcup_{\xi \leq \a}M_\xi$, then $p \sqsubset q$.
\item[$\circ$] the restriction of $\sqsubset$ to $M_\a$ is a well ordering of $\PP \cap (M_\a \setminus \bigcup_{\xi < \a}M_\xi)$ with order type $\leq \! \k$.
\end{itemize}
It is easy to construct such a well-ordering from our $\k$-high Davies tree, using the fact that $|M_\a| = \k$ for each $\a$.
Define
$$\DD = \set{p \in \PP}{\text{if } p \sqsubset q \text{ then } p \not\leq q}.$$
Note the similarity of this definition with the one in Remark~\ref{rem:D}.
We claim that $\DD$ is a dense subset of $\PP$ and that $\nt(\DD) \leq \k$.

Given $q \in \PP$, let $p$ be the $\sqsubset$-least member of $\{q\} \down \,= \set{p \in \PP}{p \leq q}$. Then $p \in \DD$, so $\DD$ contains an extension of $q$. As $q$ was arbitrary, $\DD$ is dense in $\PP$.

To show $\nt(\DD) \leq \k$, let us aim for a contradiction and suppose not. This means $\set{q \in \DD}{p \leq q} \geq \k$ for some $p \in \PP$.
Henceforth, let $p$ denote the $\sqsubset$-minimal member of $\PP$ with this property, and let $Q = \set{q \in \DD}{p \leq q}$. Fix $\a < \mu$ with $p \in M_\a \setminus \bigcup_{\xi < \a}M_\xi$.

By our definitions of $\DD$ and of $\sqsubset$,
$$\textstyle Q \ \sub \ \set{q \in \PP}{q \sqsubseteq p} \ = \ \bigcup \set{\PP \cap M_\xi}{\xi < \a} \cup \set{q \in \PP \cap M_\a}{q \sqsubset p}.$$
But $\card{\set{q \in \PP \cap M_\a}{q \sqsubset p}} < \k$ (by our definition of $\sqsubset$) and $\card{Q} \geq \k$, and it follows that $\card{Q \cap \bigcup \set{M_\xi}{\xi < \a}} \geq \k$. Fix a set $\mathcal N$ as described in property $(3)$ of Definition~\ref{def:tree}. 

Because $\card{Q \cap \bigcup \set{M_\a}{\a < \b}} \geq \k$ and $\card{\mathcal N} < \k$, we have $\card{Q \cap N} \geq \k$ for some $N \in \mathcal N$.
By Lemma~\ref{lem:kcc}, there is some $Q' \sub Q \cap N$ with $\card{Q'} < \k$ such that $Q' \down \, = (Q \cap N) \down$. Because $N$ is $<\!\k$-closed, we have $Q' \in N$. 

Because $p \leq q$ for all $q \in Q'$, the statement ``$Q' \down \, \neq \0$'' is true in $H_\theta$.
By elementarity, $N \models$ ``$Q' \down \, \neq \0$'', and thus there is some $\bar p \in N$ such that $\bar p \in Q' \down \, = (Q \cap N) \down$. Our definition of $\DD$ implies $q \sqsubseteq \bar p$ for every $q \in Q \cap N$, because $\bar p \leq q$ and $q \in \DD$. In particular, $\set{q \in \DD}{\bar p \leq q} \supseteq Q \cap N$, and so $\card{\set{q \in \DD}{\bar p \leq q}} \geq \k$. But $\bar p \in \bigcup \mathcal N = \bigcup_{\xi < \a}M_\xi$ and thus $\bar p \sqsubset p$, contradicting our choice of $p$. Therefore $\nt(\DD) \leq \k$, and this implies that $\pnt(\PP) \leq \k = S(\PP)$.
\end{proof}

\begin{corollary}\label{cor:trees}
Suppose that for any set $p$ and any regular cardinal $\k$, there are arbitrarily high values of $\mu$ for which there is a $\k$-high Davies tree for $\mu$ over $p$. Then \axiom holds.
\end{corollary}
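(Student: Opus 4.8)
The plan is to read this off directly from Theorem~\ref{thm:kcc}. Since \axiom abbreviates $\axiom(\mathrm{separative})$, I would fix an arbitrary separative poset $\PP$ and aim to show $\pnt(\PP) \le S(\PP)$; write $\k = S(\PP)$. If $\PP$ is finite there is nothing to prove, as $\axiom(\PP)$ is then covered by Remark~\ref{rem:countable=dumb}. So assume $\PP$ is infinite. Then $\k$ is an infinite cardinal, and indeed $\k \ge \aleph_1$ by the fact (used already in Remarks~\ref{rem:countable=dumb} and~\ref{rem:D}) that every infinite separative poset has uncountable Souslin number.

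The one point that needs an argument is that $\k$ is \emph{regular}, for this is what lets me invoke the corollary's hypothesis at $\k$. If $\k$ is a successor cardinal this is automatic. If $\k$ is a limit cardinal, then the definition of $S(\PP)$ says that $\PP$ has an antichain of size $>\lambda$ for every $\lambda < \k$ but no antichain of size $\k$; passing to the completion $\BB$ of $\PP$, which has the same antichain spectrum (so $S(\BB) = \k$, as noted in the proof of Theorem~\ref{thm:translation}), the Erd\H{o}s--Tarski theorem on the saturation of Boolean algebras forbids such a $\k$ from being singular. Hence $\k$ is regular in all cases.

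Now I would apply the hypothesis of the corollary with $p = \PP$ and this regular $\k$: it produces some cardinal $\mu \ge |\PP|$ for which there is a $\k$-high Davies tree for $\mu$ over $\PP$. Theorem~\ref{thm:kcc} then yields $\axiom(\PP)$, and since $\PP$ was arbitrary this is exactly \axiom. There is no genuine obstacle in this argument --- it is a repackaging of Theorem~\ref{thm:kcc} together with the trivial-case remarks and the standard regularity of the Souslin number --- but one must be careful not to skip the regularity step, since the hypothesis only furnishes $\k$-high Davies trees for \emph{regular} $\k$, so one cannot simply ``take $\k = S(\PP)$'' without first checking that this $\k$ is regular.
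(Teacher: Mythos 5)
Your proposal is correct and follows essentially the same route as the paper: the paper's proof consists precisely of citing the Erd\H{o}s--Tarski theorem that the Souslin number of any poset is a regular cardinal and then invoking Theorem~\ref{thm:kcc}. Your additional remarks (disposing of the finite case via Remark~\ref{rem:countable=dumb}, splitting the regularity argument into successor and limit cases, and passing through the completion) are just elaborations of that same argument, and you correctly identify regularity of $S(\PP)$ as the one non-trivial point.
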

\begin{proof}
Erd\H{o}s and Tarski proved in \cite{Erdos&Tarski} that the Souslin number of any poset is a regular cardinal. Using this fact, the corollary follows directly from Theorem~\ref{thm:kcc}. 
\end{proof}

Let us point out that we have proved something a little stronger than claimed. The $\k$-cc and separativity were used only to prove Lemma~\ref{lem:kcc}, and were not mentioned otherwise in the proof of Theorem~\ref{thm:kcc}. So we have really proved that the hypotheses of Corollary~\ref{cor:trees} imply that \axiom holds for all posets satisfying the conclusion of Lemma~\ref{lem:kcc}. This class of posets is strictly broader than the class of $\k$-cc posets, even if we restrict our attention to separative posets. For example, the poset $\PP$ of all infinite closed subsets of the Baire space (ordered by inclusion) is far from ccc, but it still has the property that for any $Q \sub \PP$, there is a countable $Q' \sub Q$ with $Q' \down \,= Q \down$.

\subsection*{The construction of the higher Davies trees}

We turn now to the proof that the hypothesis of Corollary~\ref{cor:trees} is consistent.
In fact, we will prove a little more by constructing the following stronger version of $\k$-high Davies trees.

\begin{definition}\label{def:sage}
A \emph{$\k$-sage Davies tree for $\mu$ over $p$} is a $\k$-high Davies tree for $\mu$ over $p$ (cf. Definition~\ref{def:tree}) satisfying the following two additional properties:
\begin{enumerate}
\setcounter{enumi}{3}
\item $\seq{M_\a}{\a < \b} \in M_\b$ for each $\b < \mu$.
\item $\bigcup_{\a < \mu} M_\a$ is a $<\!\k$-closed elementary submodel of $H_\theta$.
\end{enumerate}
\end{definition}

Assuming \ch and setting $\k = \aleph_1$, an $\aleph_1$-sage Davies tree for $\mu$ over $p$ is called a \emph{sage Davies tree for $\mu$ over $p$}.
High Davies trees suffice for many interesting applications, while some applications seem to require the stronger sage Davies trees: see, e.g., \cite[Section 13]{Soukup^2}. 
Theorem~\ref{thm:kcc} and Corollary~\ref{cor:trees} above show that $\k$-high trees are sufficient for our purposes here. We consider $\k$-sage Davies trees nonetheless because properties $(4)$ and $(5)$ come at no extra cost in our construction below, and they may be useful in future applications of these new structures.

A set $M$ is called \emph{weakly $<\!\lambda$-closed} if $M \cap H_\theta^{<\lambda} \sub M^{<\lambda}$. If $M$ satisfies (enough of) \zfc, this is equivalent to the property $M \cap [H_\theta]^{<\lambda} \sub [M]^{<\lambda}$, i.e., $<\!\lambda$-sized elements of $M$ are also subsets of $M$. 

\begin{definition}\label{def:approx}
Let $\lambda$ be a regular uncountable cardinal and $p$ any set. A \emph{long $\lambda$-approximation sequence over $p$} is a sequence $\seq{M_\a}{\a < \mu}$ of elementary submodels of $H_\theta$, for some ordinal $\mu$, such that
\begin{enumerate}
\item For each $\a < \mu$, $p \in M_\a$, $M_\a$ is weakly $<\!\lambda$-closed, and $\card{M_\a} < \lambda$.
\item For each $\a < \mu$, $\seq{M_\xi}{\xi < \a} \in M_\a$.
\end{enumerate}
Furthermore, if $\lambda = \k^+$, then $\seq{M_\a}{\a < \mu}$ is said to be \emph{closed} if
\begin{enumerate}
\item[$(3)$] For each $\a < \mu$, $M_\a$ is $<\!\k$-closed.
\end{enumerate}
Note that we place no restrictions on $\mu$. In particular, the empty sequence is considered a long $\lambda$-approximation sequence (over any $p$), by setting $\mu=0$.
\end{definition}

Long $\k^+$-approximation sequences can be thought of as a very weak form of $\k$-sage Davies trees. Examining the definitions, we see that if a sequence $\seq{M_\a}{\a < \mu}$ is a closed long $\k^+$-approximation sequence over $p$, then it satisfies every part of the definition of a $\k$-sage Davies tree for $\mu$ over $p$ except perhaps for conditions $(2)$, $(3)$, and $(5)$. 
Our strategy for getting $\k$-sage Davies trees is to show that \gch plus (a weakening of) $\square$ implies every $\mu$-length closed long $\k^+$-approximation sequence is already a $\k$-sage Davies tree for $\mu$. This strategy generalizes a result proved by the third author in \cite[Lemma 3.17]{Dave1}, which is essentially the case $\k = \aleph_0$:

\begin{theorem*}[Milovich, \cite{Dave1} 2008]
Every long $\aleph_1$-approximation sequence is a Davies tree.
\end{theorem*}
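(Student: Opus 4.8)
The plan is to verify that an arbitrary long $\aleph_1$-approximation sequence $\seq{M_\a}{\a < \mu}$ satisfies conditions $(1)$, $(2)$, and $(3)$ of Definition~\ref{def:tree} with $\k = \aleph_0$. Condition $(1)$ is almost immediate: a long $\aleph_1$-approximation sequence by definition has $p \in M_\a$, each $M_\a$ weakly $<\!\aleph_1$-closed (i.e., countable elements of $M_\a$ are subsets of $M_\a$) and $\card{M_\a} < \aleph_1$; since every element of $M_\a$ is countable when $\card{M_\a} \leq \aleph_0$, weak $<\!\aleph_1$-closure gives $[M_\a]^{<\aleph_0} \sub M_\a$, which is exactly $<\!\aleph_0$-closure. (One should also observe that we may take $\card{M_\a} = \aleph_0$ exactly; if some $M_\a$ is finite it can only be finite for small $\a$, and this is a harmless degeneracy, or one simply notes the definition of Davies tree tolerates this, or pads.) Condition $(2)$, that $[\mu]^{<\aleph_0} = [\mu]^{<\w} \sub \bigcup_{\a<\mu} M_\a$, amounts to showing every finite subset of $\mu$ lies in some $M_\a$; this follows because $\mu \sub \bigcup_\a M_\a$ (each ordinal $\b < \mu$ appears: the sequence $\seq{M_\xi}{\xi < \b+1} \in M_{\b+1}$ has domain $\b+1 \ni \b$, so $\b \in M_{\b+1}$), and a finite set of ordinals all appearing below stage $\a$ can be collected into a single later model using condition $(2)$ of Definition~\ref{def:approx} and finiteness.

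The substantive content, and the step I expect to be the main obstacle, is condition $(3)$ of Definition~\ref{def:tree}: for each $\a < \mu$ I must produce a \emph{finite} set $\mathcal N_\a$ of countable elementary submodels of $H_\theta$, each containing $p$, with $\bigcup_{\xi < \a} M_\xi = \bigcup \mathcal N_\a$. I would prove this by induction on $\a$, with the key lemma being the following ``finite cover'' statement: for each $\a$ there is a finite subset $F_\a \sub \a$ such that $\bigcup_{\xi<\a} M_\xi = \bigcup_{\xi \in F_\a} M_\xi$ --- i.e., the increasing union stabilizes in a strong, ``finitely generated'' sense. Granting this, one takes $\mathcal N_\a = \set{M_\xi}{\xi \in F_\a}$, each of which is a countable elementary submodel of $H_\theta$ containing $p$, and we are done. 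The proof of the finite-cover lemma is the heart of Milovich's argument and uses the coherence condition $(2)$ of Definition~\ref{def:approx} crucially: because $\seq{M_\xi}{\xi<\a} \in M_\a$, the model $M_\a$ can ``see'' the whole prior sequence and can reflect statements about it; in particular if $\bigcup_{\xi<\a}M_\xi$ had cofinality $\w_1$ in the $\sub$-order on models (i.e., were not finitely generated), that unbounded $\w$-or-$\w_1$ structure would be an element of, hence reflected down by, the relevant model, producing a contradiction --- roughly, a countable model cannot internally witness an $\w_1$-sequence, so the union must be caught by finitely many predecessors.

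Concretely, I would carry out the induction as follows. At successor stages $\a = \b+1$ we have $\bigcup_{\xi<\a}M_\xi = \bigcup_{\xi<\b}M_\xi \cup M_\b = \big(\bigcup \mathcal N_\b\big) \cup M_\b$, so $\mathcal N_{\b+1} = \mathcal N_\b \cup \{M_\b\}$ works and remains finite. At limit stages $\a$ of cofinality $\w$ one picks a cofinal $\w$-sequence $\xi_n \uparrow \a$, notes $\bigcup_{\xi<\a}M_\xi = \bigcup_n M_{\xi_n}$, and the work is to collapse this countable union to a finite one; this is where one invokes the coherence/elementarity: the sequence $\seq{M_\xi}{\xi<\a}$ is an element of $M_\a$ (in the ambient long approximation sequence of length $>\a$, or if $\a$ is the very top we argue directly from the definition's clause $(2)$ applied within the construction), so properties of the union $\bigcup_{\xi<\a}M_\xi$ reflect, and a pigeonhole/elementarity argument shows some $M_{\xi_n}$ already contains a cofinal amount, driving the ordinal down and contradicting minimality of a putative counterexample $\a$. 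Limit stages of uncountable cofinality are handled the same way or are ruled out by a prior observation that $\cf(\a) \leq \w$ for the relevant $\a$ (since each $M_\xi$ is countable, $\bigcup_{\xi<\a} M_\xi$ with $\cf(\a)$ uncountable would be $<\!\aleph_0$-closed of size $\aleph_1$, which one shows cannot arise as such an increasing union unless it is already finitely generated). I would then remark that the argument is Lemma~3.17 of \cite{Dave1}, reproduced here in the special case $\k = \aleph_0$ as motivation for the general $\k$-version to follow, and that the general case is what occupies the remainder of the section.
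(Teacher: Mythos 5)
Your handling of condition $(1)$ and the statement of condition $(2)$ are essentially fine (though even $(2)$ silently needs the fact that the whole family $\set{M_\a}{\a<\mu}$ is $\aleph_0$-directed, which is itself nontrivial). The proposal breaks at condition $(3)$, and the break is not a repairable gap but a false key lemma. Your ``finite cover'' claim --- that for each $\a$ there is a finite $F_\a\sub\a$ with $\bigcup_{\xi<\a}M_\xi=\bigcup_{\xi\in F_\a}M_\xi$ --- already fails at $\a=\w$: since $\seq{M_n}{n<m}\in M_m$ and $M_m$ is weakly $<\!\aleph_1$-closed, the $M_n$ form a strictly increasing chain, so $\bigcup_{n<\w}M_n$ equals no finite subunion. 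It fails even more dramatically at $\a=\w_1$, where $\bigcup_{\xi<\w_1}M_\xi\supseteq\w_1$ has cardinality $\aleph_1$ while any finite union of the countable $M_\xi$ is countable. Relatedly, you have over-read Definition~\ref{def:tree}$(3)$: the models $N\in\mathcal N_\a$ are \emph{not} required to be countable (only to be $<\!\k$-closed elementary submodels containing $p$), and in every nontrivial instance they are uncountable; insisting that they be countable, as you do, makes the task provably impossible by the cardinality count just given. The heuristic that ``a countable model cannot internally witness an $\w_1$-sequence, so the union is finitely generated'' does not correspond to a valid argument, and limit stages of uncountable cofinality (e.g.\ $\a=\w_1$) are certainly not ruled out.

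What is actually true --- and what Milovich's proof establishes, as does Lemma~\ref{lem:square} of this paper specialized to $\k=\eta=\aleph_0$ (where Case 3d is vacuous and neither \gch nor \vws is needed) --- is that $\bigcup_{\xi<\a}M_\xi$ is the union of \emph{finitely many sets, each of which is the union of an $\aleph_0$-directed subfamily} of $\set{M_\xi}{\xi<\a}$. The finitely many directed pieces are indexed by the normal intervals of the $\aleph_1$-truncated cardinal normal form of $\a$: by Lemma~\ref{lem:c} (together with the reindexing Lemmas~\ref{lem:a} and~\ref{lem:b}), $\set{M_\xi}{\xi\in\mathcal I_j(\a)}$ is $\aleph_0$-directed for each $j<\daleth(\a)$, so one may take $\mathcal N_\a=\set{\bigcup_{\xi\in\mathcal I_j(\a)}M_\xi}{j<\daleth(\a)}$, a finite set whose members are elementary submodels of $H_\theta$ containing $p$ because they are directed unions of such. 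The coherence condition $\seq{M_\xi}{\xi<\a}\in M_\a$ enters through the definability of ordinals from their cardinal normal forms, which drives the directedness of each block --- not through any reflection of unboundedness into a countable model. This normal-form decomposition is the heart of the theorem and is entirely absent from the proposal.
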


The following lemma collects some easy-to-prove facts concerning these approximation sequences.

\begin{lemma}\label{lem:facts} 
Let $\lambda$ be a regular uncountable cardinal.
\begin{enumerate}
\item If $\seq{M_\a}{\a < \mu}$ is a long $\lambda$-approximation sequence, so is $\seq{M_\xi}{\xi < \a}$ for any $\a < \mu$.
\item If $\seq{M_\a}{\a < \mu}$ is a long $\lambda$-approximation sequence, then $\a \in M_\a$ for all $\a < \mu$.
\item If $\seq{M_\a}{\a < \mu}$ is a long $\lambda$-approximation sequence and $\a < \b < \mu$, 
$$\a \in M_\b \quad \Leftrightarrow \quad M_\a \in M_\b \quad \Leftrightarrow \quad M_\a \sub M_\b.$$
\item For every ordinal $\mu$ and every set $p$, there is a long $\lambda$-approximation sequence over $p$ of length $\mu$.
\item Assume \gch, and suppose $\lambda = \k^+$ for some regular cardinal $\k$. For every ordinal $\mu$ and every set $p$, there is a closed long $\k^+$-approximation sequence over $p$ of length $\mu$.
\end{enumerate}
\end{lemma}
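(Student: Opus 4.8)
The plan is to dispatch the five items in turn, using only Definition~\ref{def:approx} and standard facts about elementary submodels; items (1)--(3) are purely formal, and (4)--(5) are instances of the usual ``Skolem hull and close'' construction. For (1) there is nothing to check: each condition Definition~\ref{def:approx} imposes on $\seq{M_\xi}{\xi<\a}$ (that every $M_\xi$, $\xi<\a$, contain $p$, be weakly $<\!\lambda$-closed, have cardinality $<\!\lambda$, and satisfy $\seq{M_\eta}{\eta<\xi}\in M_\xi$) is already a condition satisfied by $\seq{M_\a}{\a<\mu}$. For (2): since $\seq{M_\xi}{\xi<\a}\in M_\a$ and $M_\a\prec H_\theta$, the model $M_\a$ is closed under the definable operation $\domain$, so $\a=\domain(\seq{M_\xi}{\xi<\a})\in M_\a$. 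For (3) I would prove the cycle $\a\in M_\b\Rightarrow M_\a\in M_\b\Rightarrow M_\a\sub M_\b\Rightarrow\a\in M_\b$: the first implication applies the function $\seq{M_\xi}{\xi<\b}\in M_\b$ to the argument $\a$; the second uses that $M_\a$ is an element of $M_\b$ of cardinality $<\!\lambda$ and that $M_\b$ is weakly $<\!\lambda$-closed; the third is immediate from (2).

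For (4) I would fix $\theta$ large compared with $\mu$, $\lambda$ and $p$ and build $\seq{M_\a}{\a<\mu}$ by recursion. Given $\seq{M_\xi}{\xi<\a}\in H_\theta$, set $X_0$ to be the Skolem hull in $H_\theta$ of $\{p,\seq{M_\xi}{\xi<\a}\}$, let $X_{n+1}$ be the Skolem hull of $X_n\cup\bigcup\{z\in X_n:\card{z}<\lambda\}$, and put $M_\a=\bigcup_{n\in\w}X_n$. Each step adds at most $\card{X_n}\cdot\sup\{\card{z}:z\in X_n,\ \card{z}<\lambda\}$ new points, and regularity of $\lambda$ makes this product $<\!\lambda$, so $\card{X_n}<\lambda$ at every finite stage and hence $\card{M_\a}<\lambda$; moreover $M_\a$ is weakly $<\!\lambda$-closed because any $z\in M_\a$ with $\card{z}<\lambda$ lies in some $X_n$, whence $z\sub X_{n+1}\sub M_\a$. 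Clauses (1) and (2) of Definition~\ref{def:approx} follow at once, since $p,\seq{M_\xi}{\xi<\a}\in X_0\sub M_\a$.

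For (5) I would run the same recursion but replace the $\w$-chain by a continuous increasing chain $\seq{X_\eta}{\eta<\k}$ with $X_{\eta+1}$ the Skolem hull of $X_\eta\cup[X_\eta]^{<\k}$, and take $M_\a=\bigcup_{\eta<\k}X_\eta$. Here \gch and the regularity of $\k$ give $\k^{<\k}=\k$, so $\card{[X_\eta]^{<\k}}\le\k$ and the induction keeps $\card{X_\eta}\le\k$; since a $<\!\k$-closed elementary submodel of $H_\theta$ contains $\k$, this forces $\card{M_\a}=\k$. Regularity of $\k$ also forces any subset of $M_\a$ of size $<\!\k$ to lie in a single $X_\eta$, hence to be an element of $X_{\eta+1}\sub M_\a$, so $M_\a$ is $<\!\k$-closed; and a $<\!\k$-closed elementary submodel of $H_\theta$ is automatically weakly $<\!\k^+$-closed, because a $\le\!\k$-sized element of $M_\a$ can be enumerated by a surjection from $\k\sub M_\a$. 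Thus $\seq{M_\a}{\a<\mu}$ satisfies every clause required of a closed long $\k^+$-approximation sequence.

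The only delicate point in this lemma of ``easy-to-prove facts'' is the bookkeeping in (5): one must take the closing chain to have length $\k$ rather than $\w$, so that $<\!\k$-sized subsets of the final union are trapped at a bounded stage, and one must invoke \gch at precisely the point where $\card{[X_\eta]^{<\k}}$ threatens to exceed $\k$. Everything else is routine manipulation of elementary submodels.
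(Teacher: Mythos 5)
Your proposal is correct and follows essentially the same route as the paper: items (1)--(3) are handled by the identical formal observations (definability of $\a$ from the sequence, applying the sequence-as-function to $\a$, weak $<\!\lambda$-closure plus $\card{M_\a}<\lambda$, and $\a\in M_\a\sub M_\b$), and items (4)--(5) by the same transfinite recursion with a closing-off argument at each stage, invoking $\k^{<\k}=\k$ from \gch exactly where the paper does. The only difference is that you spell out the closing-off chains explicitly where the paper calls them ``standard,'' and your details (in particular using a $\k$-length continuous chain in (5) so that small subsets are trapped at a bounded stage) are correct.
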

\begin{proof}
$(1)$ is evident from the definitions.

For $(2)$, note that $\a$ is definable from $\seq{M_\xi}{\xi < \a}$. As $\seq{M_\xi}{\xi < \a} \in M_\a$ and $M_\a \prec H_\theta$, this means $\a \in M_\a$.

For $(3)$, let $\a < \b < \mu$. Observe that $\a \in M_\b$ and $\seq{M_\xi}{\xi < \b} \in M_\b$ implies $M_\a \in M_\b$. Then, $M_\a \in M_\b$ implies $M_\a \sub M_\b$ because $\card{M_\a} < \lambda$ and $M_\b$ is weakly $\lambda$-closed. Finally, $(2)$ implies that if $M_\a \sub M_\b$ then $\a \in M_\b$.

For $(4)$ (and $(5)$), we use transfinite recursion with a closing-off argument at each stage. Let $M_0$ be any weakly $\lambda$-closed elementary submodel of $H_\theta$ containing $p$, such that $\card{M_0} < \lambda$ (and for $(5)$, let us also insist that $M_0$ is $<\!\k$-closed). The existence of such a model uses a standard closing-off argument (plus the equality $\k = \k^{<\k}$, which follows from \gch, if we insist that $M_0$ is $<\!\k$-closed). At stage $\a > 0$ of the recursion, let us assume that $\seq{M_\xi}{\xi < \a}$ has already been constructed. Using the standard closing-off argument again, there is a weakly $\lambda$-closed $M_\a \prec H_\theta$ such that $p, \seq{M_\xi}{\xi < \a} \in M_\a$ and $|M_\a| < \lambda$. (If \gch holds, we may also insist that $M_\a$ is $<\!\k$-closed.) The sequence $\seq{M_\a}{\a < \mu}$ produced by this recursion has all the desired properties.
\end{proof}

Thus we see that (closed) long $\k^+$-approximation sequences of any length are easy to construct using $\zfc(+\gch)$ for any regular infinite cardinal $\k$. On the other hand, basic cardinal arithmetic shows that there are no nonempty closed long $\k^+$-approximation sequences if $\k^{<\k} > \k$.

\begin{definition}\label{def:daviesproperty}
Let $\eta, \k$ be infinite regular cardinals with $\k \leq \eta$.
\begin{itemize}
\item[$\circ$] A set $S$ is \emph{$\k$-directed} if for every $T \in [S]^{<\k}$, there is some $x \in S$ such that $\bigcup T \sub x$.
\item[$\circ$] A sequence $\seq{M_\a}{\a < \mu}$ has the \emph{$(\eta,\k)$-Davies property} if for each $\a \leq \mu$, there is a set $\mathcal N_\a$ such that
\begin{itemize}
\item each $N \in \mathcal N_\a$ is the union of a $\k$-directed subset of $\set{M_\xi}{\xi < \a}$.
\item $\card{\mathcal N_\a} < \eta$.
\item $\bigcup_{\xi < \a}M_\xi = \bigcup \mathcal N_\a.$
\end{itemize}
\item[$\circ$] A sequence $\seq{M_\a}{\a < \mu}$ has the \emph{$\k$-Davies property} if it has the $(\k,\k)$-Davies property.
\end{itemize}
\end{definition}

The following two easy-to-prove facts concerning directed unions of elementary submodels will be used in what follows:
\begin{itemize}
\item[$\circ$] If $S$ is a $\k$-directed set for some infinite $\k$, and if $M \prec H_\theta$ for each $M \in S$, then $\bigcup S \prec H_\theta$.
\item[$\circ$] If furthermore each $M \in S$ is $<\!\k$-closed, then $\bigcup S$ is $<\!\k$-closed.
\end{itemize}

\begin{lemma}\label{lem:sage}
Suppose that $\k, \mu$ are infinite cardinals with $\cf(\mu) \geq \k$, and that $\seq{M_\a}{\a < \mu}$ is a closed long $\k^+$-approximation sequence over some $p$ with the $\k$-Davies property. If $\set{M_\a}{\a < \mu}$ is $\k$-directed, then $\seq{M_\a}{\a < \mu}$ is a $\k$-high Davies tree for $\mu$ over $p$.
\end{lemma}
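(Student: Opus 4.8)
The plan is to verify the three defining clauses of a $\k$-high Davies tree (Definition~\ref{def:tree}) directly for the given sequence $\seq{M_\a}{\a<\mu}$; clauses $(4)$ and $(5)$ of Definition~\ref{def:sage} play no role here. Clause $(1)$ should fall out of the definitions with essentially no work: $p\in M_\a$ and the $<\!\k$-closedness of $M_\a$ are built into the notion of a closed long $\k^+$-approximation sequence (Definition~\ref{def:approx}); $\card{M_\a}\leq\k$ because $\card{M_\a}<\k^+$; and $\card{M_\a}\geq\k$ because every $<\!\k$-closed elementary submodel of $H_\theta$ contains $\k$ as a subset (the first of the two facts recorded just before Definition~\ref{def:tree}). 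Hence $\card{M_\a}=\k$ for all $\a<\mu$.

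For clause $(3)$, I would fix $\a<\mu$ and take the set $\mathcal N_\a$ furnished by the $\k$-Davies property (Definition~\ref{def:daviesproperty}): it has size $<\k$, satisfies $\bigcup_{\xi<\a}M_\xi=\bigcup\mathcal N_\a$, and each of its members is the union of a $\k$-directed subset of $\set{M_\xi}{\xi<\a}$. Deleting any member equal to $\0$ (there is at most one, namely the union of the empty directed family) does not change $\bigcup\mathcal N_\a$, so I may assume each $N\in\mathcal N_\a$ equals $\bigcup S_N$ for some nonempty $\k$-directed $S_N\sub\set{M_\xi}{\xi<\a}$. Since each $M_\xi$ is an elementary submodel of $H_\theta$ that is $<\!\k$-closed, the two facts recorded just after Definition~\ref{def:daviesproperty} show $N=\bigcup S_N$ is a $<\!\k$-closed elementary submodel of $H_\theta$; and $p\in N$ since $p\in M_\xi\sub N$ for any $M_\xi\in S_N$. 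Thus the pruned $\mathcal N_\a$ witnesses clause $(3)$, vacuously when $\a=0$.

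The only clause that needs a genuine, if brief, argument is clause $(2)$, namely $[\mu]^{<\k}\sub\bigcup_{\a<\mu}M_\a$. Given $A\in[\mu]^{<\k}$, the subfamily $\set{M_\a}{\a\in A}$ of $\set{M_\a}{\a<\mu}$ has size at most $\card{A}<\k$, so by the $\k$-directedness hypothesis there is $\b<\mu$ with $M_\a\sub M_\b$ for every $\a\in A$. By part $(2)$ of Lemma~\ref{lem:facts}, $\a\in M_\a$ for each $\a$, so $A\sub M_\b$; since $\card{A}<\k$ and $M_\b$ is $<\!\k$-closed, $A\in[M_\b]^{<\k}\sub M_\b\sub\bigcup_{\a<\mu}M_\a$. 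As $A$ was arbitrary, clause $(2)$ holds. (The hypothesis $\cf(\mu)\geq\k$ is what makes a $\k$-directed family of this kind available in the intended application, and it ensures every $A\in[\mu]^{<\k}$ is bounded in $\mu$, though the directedness argument above does not explicitly invoke boundedness.)

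I do not anticipate a real obstacle here: the whole proof is definition-chasing. The one point worth flagging is that the phrase ``union of a $\k$-directed subset of $\set{M_\xi}{\xi<\a}$'' in the $\k$-Davies property is tailored precisely so that such unions are automatically $<\!\k$-closed elementary submodels containing $p$, which is exactly what clause $(3)$ of Definition~\ref{def:tree} demands; no separate construction is needed there beyond discarding the trivial member. The only bookkeeping to be careful about is the degenerate case $\a=0$ in clause $(3)$ and keeping the two closure notions — $<\!\k$-closed versus weakly $<\!\k^+$-closed — distinct throughout.
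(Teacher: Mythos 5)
Your proof is correct and follows essentially the same route as the paper: conditions $(1)$ (and $(4)$) come for free from the definition of a closed long $\k^+$-approximation sequence together with $<\!\k$-closure forcing $\card{M_\a}=\k$, condition $(2)$ is exactly the $\k$-directedness argument you give, and condition $(3)$ is read off from the $\k$-Davies property via the two facts about $\k$-directed unions of $<\!\k$-closed elementary submodels. (Your pruning of an ``empty'' member of $\mathcal N_\a$ is harmless but unnecessary, since under Definition~\ref{def:daviesproperty} a $\k$-directed set is automatically nonempty; the paper's proof also verifies clause $(5)$ of Definition~\ref{def:sage}, which the stated lemma does not require.)
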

\begin{proof}
As we mentioned already, it suffices to check conditions $(2)$, $(3)$, and $(5)$ from Definitions \ref{def:tree} and \ref{def:sage}.

To prove $(2)$, let $X \sub \mu$ with $\card{X} < \k$. Because $\set{M_\a}{\a < \mu}$ is $\k$-directed, there is some $\a < \mu$ such that $\bigcup \set{M_\xi}{\xi \in X} \sub M_\a$. 
But $M_\xi \sub M_\a$ implies $\xi \in M_\a$, so $X \sub M_\a$. Because $M_\a$ is $<\!\k$-closed, $X \in M_\a$.

To prove $(3)$, fix $\a < \mu$ and let $\mathcal N_\a$ be as in the definition of the $\k$-Davies property. Each $N \in \mathcal N_\a$ is the union of a $\k$-directed set of $<\!\k$-closed elementary submodels of $H_\theta$, each containing $p$, which implies $N$ is a $<\!\k$-closed elementary submodel of $H_\theta$ and $p \in N$. The definition of the $\k$-Davies property also gives $\bigcup \mathcal N_\a = \bigcup_{\xi < \a}M_\xi$, so $(3)$ is satisfied.

To prove $(5)$, simply note that $\bigcup \set{M_\a}{\a < \mu}$ is a $\k$-directed union of $<\!\k$-closed elementary submodels of $H_\theta$.
\end{proof}

Our goal now is to get sequences satisfying the hypotheses of Lemma~\ref{lem:sage}, or rather to show that $\gch+\square$ implies every closed long $\k^+$-approximation sequence already satisfies these hypotheses. Our proof exploits the cardinal normal form, a variant of the Cantor normal form introduced in \cite{Dave2}. 

\begin{definition}
$\ $
\begin{itemize}
\item[$\circ$] An ordinal $\a$ is \emph{cardinally even} if $\a = \card{\a} \cdot \b$ for some ordinal $\b$, where $\cdot$ denotes ordinal multiplication. 
\item[$\circ$] A finite (possibly empty) formal sum $\dlt_0+\dots+\dlt_{n-1}$ is a \emph{cardinal normal form} if each $\dlt_i$ is cardinally even and $\card{\dlt_0} > \dots > \card{\dlt_{n-1}} > 0$. If $\a = \dlt_0+\dots+\dlt_{n-1}$, we say that $\dlt_0+\dots+\dlt_{n-1}$ is the cardinal normal form of $\a$.
\item[$\circ$] Given a cardinal $\lambda$, a finite (possibly empty) sum $\dlt_0+\dots+\dlt_{n-1}+\dlt_n$ is called a \emph{$\lambda$-truncated cardinal normal form} if either it is a cardinal normal form with ${\dlt_n} \geq \lambda$, or else the (possibly empty) sub-sum $\dlt_0+\dots+\dlt_{n-1}$ is a cardinal normal form with $\dlt_{n-1} \geq \lambda$, and $\dlt_n$ is an ordinal less than $\lambda$. 
If $\a = \dlt_0+\dots+\dlt_{n-1}+\dlt_n$, we say that $\dlt_0+\dots+\dlt_n$ is the $\lambda$-truncated cardinal normal form of $\a$.
\end{itemize}
\end{definition}

For every ordinal $\a$, there is a unique pair $(\b,\g)$ such that $\a = \card{\a} \cdot \b + \g$ and $\g<\card{\a}$, and it follows that every ordinal has a unique cardinal normal form. In particular, the cardinal normal form of $\a$ is definable from $\a$.
Similarly, given some cardinal $\lambda$, every ordinal has a unique $\lambda$-truncated cardinal normal form.
Note that the $\lambda$-truncated cardinal normal form of $\a$ can be obtained by collapsing all the $<\!\lambda$-sized terms of the cardinal normal form of $\a$.
This implies that the $\lambda$-truncated cardinal normal form of $\a$ is definable from $\a$ even without having $\lambda$ as a parameter, although it may not be uniformly definable for all $\a$ unless we include $\lambda$ as a parameter in the definition. 
By convention, the empty sum is the cardinal normal form of $0$.

\begin{definition}\label{def:daleth}
Fix an infinite regular cardinal $\lambda$, and let $\a$ be an ordinal.
\begin{itemize}
\item[$\circ$] The \emph{depth of $\a$}, denoted $\daleth(\a)$, is the number of terms in the $\lambda$-truncated cardinal normal form of $\a$.
\item[$\circ$] If $\dlt_0+\dots+\dlt_{n-1}+\dlt_n$ is the $\lambda$-truncated cardinal normal form of $\a$, we refer to the $\dlt_j$ as the \emph{normal terms} of $\a$, and we write $\dlt_j = \dlt_j(\a)$ for all $j < \daleth(\a) = n+1$.
\item[$\circ$] If $j \leq \daleth(\a)$, then the $j^\mathrm{th}$ \emph{normal segment} of $\a$ is 
$$\lfloor \a \rfloor_j = \dlt_0(\a)+\dots+\dlt_{j-1}(\a).$$
\item[$\circ$] If $j < \daleth(\a)$, then the $j^\mathrm{th}$ \emph{normal interval} of $\a$ is 
$$\mathcal I_j(\a) = [\lfloor \a \rfloor_j,\lfloor \a \rfloor_{j+1}).$$
\end{itemize}
\end{definition}

Note that all the terms in this definition really depend on both $\a$ and $\lambda$. In what follows, when dealing with a long $\lambda$-approximation sequence, the terms above are always defined from $\lambda$, never any other cardinal.

\begin{lemma}\label{lem:a}
Let $\seq{M_\a}{\a < \mu}$ be a long $\lambda$-approximation sequence, and fix $\a < \mu$. Let $j < \daleth(\a)$ and let $f(\b) = \lfloor \a \rfloor_j + \b$ for all $\b < \dlt_j(\a)$. Then $\seq{M_{f(\b)}}{\b < \dlt_j(\a)}$ is a long $\lambda$-approximation sequence.
\end{lemma}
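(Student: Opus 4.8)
The plan is to verify directly the two defining conditions of a long $\lambda$-approximation sequence for the subsequence $\seq{M_{f(\b)}}{\b < \dlt_j(\a)}$. Condition (1) is automatic: each $M_{f(\b)}$ is already one of the models $M_\gamma$ from the original sequence, so it is a weakly $<\!\lambda$-closed elementary submodel of $H_\theta$ of size $<\!\lambda$ containing $p$. The work is all in condition (2): for each $\b < \dlt_j(\a)$ we must show $\seq{M_{f(\b')}}{\b' < \b} \in M_{f(\b)}$.

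The key observation to exploit is that the map $f$ and the ordinal $\dlt_j(\a)$ are definable from $\a$ and $\lambda$, using the remarks following Definition~\ref{def:daleth} that the $\lambda$-truncated cardinal normal form of any ordinal (hence its normal terms, segments and intervals) is definable from that ordinal. So first I would argue that $\a$ is available inside $M_{f(\b)}$: since $f(\b) = \lfloor\a\rfloor_j + \b < \a$ (as $j < \daleth(\a)$ means there is at least one more normal term, so $\b < \dlt_j(\a)$ forces $f(\b)$ to stay strictly below $\lfloor\a\rfloor_{j+1} \le \a$), and since by Lemma~\ref{lem:facts}(2) applied to the original sequence we have $f(\b) \in M_{f(\b)}$ — wait, more carefully: I want $\a \in M_{f(\b)}$. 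The cleanest route is to note $\seq{M_\xi}{\xi < f(\b)} \in M_{f(\b)}$ by the approximation property of the original sequence, and this sequence has length $f(\b)$; but I actually need $\a$ itself. Here I would instead use that $\seq{M_\xi}{\xi < f(\b)+1} \in M_{f(\b)+1}$, but that is the wrong model. The right fix: since the original sequence $\seq{M_\xi}{\xi<\mu}$ is in play, and $M_{f(\b)} \prec H_\theta$ contains $\seq{M_\xi}{\xi<f(\b)}$, from which $f(\b)$ is definable, we get $f(\b) \in M_{f(\b)}$. Now from $f(\b)$ and $\lambda$ one recovers $\lfloor\a\rfloor_j$ and $j$ only if one already knows a bound; the actual point is that $\a$ appears as a parameter somewhere accessible. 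The honest argument: we are given $\a < \mu$ and the original sequence $\seq{M_\xi}{\xi<\mu}$; this whole sequence need not lie in any single $M_\gamma$. Instead I should show $\a \in M_{f(\b)}$ directly: we have $f(\b) \ge \lfloor\a\rfloor_j$, and $\lfloor\a\rfloor_j$ is the $j$-th normal segment of $\a$. Since $\b \in M_{f(\b)}$ (by Lemma~\ref{lem:facts}(2) for the original sequence, $f(\b) \in M_{f(\b)}$, and $\b$ is the remainder $f(\b) - \lfloor\a\rfloor_j$... which again needs $\lfloor\a\rfloor_j$). The truly clean approach is: $\a$ is $\lambda$-truncated-normal-form-definable from $f(\b)$ together with the datum "which normal interval $f(\b)$ falls in", but since $f(\b) \in \mathcal{I}_j(\a)$ and the truncated normal form of $f(\b)$ begins with exactly the terms $\dlt_0(\a),\dots,\dlt_{j-1}(\a)$ followed by a final term $<\dlt_j(\a)$, one reads $\lfloor\a\rfloor_j$ off from $f(\b)$'s own truncated normal form — this is definable from $f(\b)$ and $\lambda$ alone. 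Hence $\lfloor\a\rfloor_j \in M_{f(\b)}$. One does not literally need $\a$; one needs $f$ restricted to $\b$, and $f\rest\b = \seq{\lfloor\a\rfloor_j + \b'}{\b'<\b}$ is definable from $\lfloor\a\rfloor_j$ and $\b$, both of which lie in $M_{f(\b)}$. Therefore $f\rest\b \in M_{f(\b)}$.

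Finishing: since $M_{f(\b)} \prec H_\theta$ contains both $f\rest\b$ and (from its own definability argument, or because $f(\b) \in M_{f(\b)}$ gives $\seq{M_\xi}{\xi<f(\b)} \in M_{f(\b)}$) the original sequence up to $f(\b)$, it contains the composition $\seq{M_{f(\b')}}{\b'<\b}$, which is exactly $\seq{M_\xi}{\xi<f(\b)}$ composed with $f\rest\b$. Concretely, $\seq{M_{f(\b')}}{\b'<\b} = \seq{M_\xi}{\xi < f(\b)} \circ (f\rest\b)$ is definable from the pair $\bigl(\seq{M_\xi}{\xi<f(\b)},\, f\rest\b\bigr)$, both members of $M_{f(\b)}$, so it lies in $M_{f(\b)}$ by elementarity. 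That establishes condition (2) and completes the proof.

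I expect the main obstacle to be purely bookkeeping: making precise that $\lfloor\a\rfloor_j$ (equivalently, enough of $\a$) is definable from $f(\b)$ and $\lambda$ without circularity, i.e. that "$f(\b)$ knows which normal interval of $\a$ it belongs to" because that information is encoded in $f(\b)$'s own $\lambda$-truncated cardinal normal form. Once that definability point is nailed down, everything else is a routine application of elementarity and Lemma~\ref{lem:facts}(3); there is no real combinatorial content and no use of \gch or $\square$ here.
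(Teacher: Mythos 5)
Your proof is correct and follows essentially the same route as the paper's: verify that condition (1) is inherited, then obtain $\seq{M_{f(\b')}}{\b'<\b}\in M_{f(\b)}$ by recovering $\lfloor\a\rfloor_j$ and $\b$ from the normal form of $f(\b)\in M_{f(\b)}$ (so that $f\rest\b$ lies in $M_{f(\b)}$) and composing with $\seq{M_\xi}{\xi<f(\b)}\in M_{f(\b)}$. The only cosmetic difference is that the paper reads $\lfloor\a\rfloor_j$ and $\b$ off the plain cardinal normal form of $f(\b)$ (using only the natural number $j$ as an extra parameter), whereas you use the $\lambda$-truncated form; both work, and the rest of your argument matches the paper's.
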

\begin{proof}
It suffices to check property $(2)$ of Definition~\ref{def:approx}, as everything else in the definition is inherited from the sequence $\seq{M_\b}{\b < \mu}$. If $\b < \dlt_j(\a)$, then $\seq{M_\xi}{\xi < f(\b)} \in M_{f(\b)}$ because $\seq{M_\b}{\b < \mu}$ is a long $\lambda$-approximation sequence. But $\lfloor \a \rfloor_j$ and $\b$ are definable from $f(\b)$ via its cardinal normal form. As $f(\b) \in M_{f(\b)}$, this gives $\lfloor \a \rfloor_j,\b \in M_{f(\b)}$ and hence $f \in M_{f(\b)}$. Having $f, \b, \seq{M_\xi}{\xi < f(\b)} \in M_{f(\b)}$ gives $\seq{M_{f(\xi)}}{\xi < \b} \in M_{f(\b)}$. 
\end{proof}

\begin{lemma}\label{lem:b}
Let $\seq{M_\a}{\a < \mu}$ be a long $\lambda$-approximation sequence, and fix some
$\a = \card{\a} \cdot (\z+1) \leq \mu$.
For each $\b < \card{\a}$, let $N_\b = M_{\card{\a} \cdot \z + \b}$.
Then
\begin{enumerate}
\item for every $M \in \set{M_\b}{\b < \a}$, there is some $N \in \set{N_\b}{\b < \card{\a}}$ with $M \sub N$, and 
\item $\seq{N_\b}{\b < \card{\a}}$ is a long $\lambda$-approximation sequence.
\end{enumerate}
\end{lemma}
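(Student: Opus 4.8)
The plan is to prove the two parts in order, both by unwinding Definition~\ref{def:approx} and exploiting what Lemma~\ref{lem:facts}$(3)$ tells us about containment among the $M_\xi$. Write $\k = \card{\a}$, so that $\a = \k\cdot(\z+1) = \k\cdot\z + \k$, and the models $N_\b = M_{\k\cdot\z+\b}$ are exactly the $M_\xi$ for $\xi$ in the final ``block'' $[\k\cdot\z,\,\k\cdot\z+\k) = [\k\cdot\z,\a)$. Note $\k\cdot\z$ is cardinally even and $\card{\k\cdot\z} = \k$ (as $\z < \k$ would be needed to change the cardinality, but here the leading normal term of each $\k\cdot\z+\b$ is $\k\cdot\z$ itself provided $\z \geq 1$; the degenerate case $\z = 0$, where $\a = \k$ and $N_\b = M_\b$, makes $(1)$ and $(2)$ trivial, so assume $\z \geq 1$). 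The key arithmetical observation is that for every $\b < \k$, the ordinal $\k\cdot\z + \b$ has cardinality $\k$, hence $\k\cdot\z$ is a normal segment of it (indeed $\lfloor \k\cdot\z+\b\rfloor_j = \k\cdot\z$ for the appropriate $j$), so $\k\cdot\z$ is definable from $\k\cdot\z+\b$ via its cardinal normal form; combined with Lemma~\ref{lem:facts}$(2)$ this shows $\k\cdot\z \in N_\b$ and hence $\k\cdot\z \subseteq N_\b$ for every $\b < \k$.

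For part $(1)$: fix $M = M_\xi$ with $\xi < \a$. If $\xi \geq \k\cdot\z$ then $M_\xi = N_\b$ for $\b = \xi - \k\cdot\z < \k$ and we are done with $N = M_\xi$ itself. If $\xi < \k\cdot\z$, then since (as just noted) $\k\cdot\z \subseteq N_\b$ for each $\b$, in particular $\xi \in N_0 = M_{\k\cdot\z}$, and by Lemma~\ref{lem:facts}$(3)$ this gives $M_\xi \subseteq M_{\k\cdot\z} = N_0$. So $N = N_0$ works in this case. Either way we have found the required $N \in \set{N_\b}{\b<\k}$ containing $M$.

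For part $(2)$: we must check the two clauses of Definition~\ref{def:approx} for the reindexed sequence $\seq{N_\b}{\b<\k}$ (clause $(3)$, if $\lambda = \k'^+$, is inherited verbatim since each $N_\b$ is literally one of the $M_\xi$, which is already $<\!\k'$-closed; likewise weak $<\!\lambda$-closedness, the size bound $\card{N_\b} < \lambda$, and $p \in N_\b$ are all inherited). Clause $(1)$ is thus immediate. For clause $(2)$ we must show $\seq{N_{\b'}}{\b' < \b} \in N_\b$ for each $\b < \k$. As in the proof of Lemma~\ref{lem:a}, let $g(\b) = \k\cdot\z + \b$, so $N_\b = M_{g(\b)}$. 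Since $\seq{M_\a}{\a<\mu}$ is a long $\lambda$-approximation sequence, $\seq{M_\xi}{\xi < g(\b)} \in M_{g(\b)} = N_\b$. Now $\k\cdot\z$ and $\b$ are both definable from $g(\b)$ via its cardinal normal form, and $g(\b) \in N_\b$ by Lemma~\ref{lem:facts}$(2)$, so $\k\cdot\z, \b \in N_\b$ and hence the function $g$ (restricted to $\b+1$, say) lies in $N_\b$. Having $g$, $\b$, and $\seq{M_\xi}{\xi < g(\b)}$ all in $N_\b$, we obtain $\seq{M_{g(\b')}}{\b' < \b} = \seq{N_{\b'}}{\b' < \b} \in N_\b$, as required.

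The only real subtlety — and the step I'd be most careful about — is the degenerate/boundary bookkeeping: making sure $\k\cdot\z$ really is a normal segment (equivalently, $\card{\k\cdot\z+\b} = \k$ for all $\b < \k$) so that the ``definable from the cardinal normal form'' argument goes through, and separately handling $\z = 0$. Everything else is a routine transcription of the reasoning already used for Lemma~\ref{lem:a} and in Lemma~\ref{lem:facts}$(3)$.
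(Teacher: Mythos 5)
Part $(2)$ of your argument is fine and matches the paper's proof. But part $(1)$ contains a genuine gap at the step ``$\k\cdot\z \in N_\b$ and hence $\k\cdot\z \subseteq N_\b$.'' Weak $<\!\lambda$-closure only converts \emph{membership} into \emph{containment} for elements of cardinality less than $\lambda$, and the ordinal $\g = \card{\a}\cdot\z$ has cardinality $\card{\a}$, which in the intended applications (e.g.\ Cases 3b--3d of Lemma~\ref{lem:square}) is typically far larger than $\lambda$. Indeed, your conclusion that the single model $N_0 = M_{\g}$ contains every $M_\xi$ with $\xi < \g$ is impossible on cardinality grounds whenever $\card{\a} \geq \lambda$: since $\xi \in M_\xi$, that containment would force $\g \subseteq N_0$, while $\card{N_0} < \lambda \leq \card{\a} = \card{\g}$. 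The whole point of statement $(1)$ is that the earlier models are covered by the \emph{family} $\set{N_\b}{\b < \card{\a}}$, with different $M_\xi$ landing in different $N_\b$ --- not that one $N_\b$ absorbs them all.

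The paper's proof repairs exactly this: it fixes a surjection $f\colon \card{\a} \to \g$ with $f \in M_\g$ (which exists by elementarity since $\card{\g} = \card{\a}$ and $\g \in M_\g$). Then for each $\b < \card{\a}$, the definability of $(\g,\b)$ from $\g+\b$ gives $\g,\b \in N_\b$, hence $M_\g \subseteq N_\b$ and so $f \in N_\b$, hence $f(\b) \in N_\b$ and $M_{f(\b)} \subseteq N_\b$ by Lemma~\ref{lem:facts}$(3)$. Surjectivity of $f$ then yields $(1)$. Your reduction to the case $\z \geq 1$ and your verification of $(2)$ can be kept as is, but part $(1)$ needs to be rewritten along these lines.
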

\begin{proof}
The cases $\a = 0$ and $\z = 0$ are both trivial, so suppose $\a,\z \geq 1$.

Let $\g = \card{\a} \cdot \z$. Fix a surjection $f: \card{\a} \to \g$ such that $f \in M_\g$, and fix $\b < \card{\a}$. 
The pair $(\g,\b)$ is definable from the ordinal $\g+\b$ via its cardinal normal form, because $|\b| < |\g|$; therefore $\g,\b \in M_{\g+\b} = N_\b$. Now $\g \in M_{\g+\b}$ implies $M_\g \sub M_{\g+\b}$, which implies $f \in M_{\g+\b}$. 
Because $f,\b \in M_{\g+\b}$, we have $f(\b) \in M_{\g+\b}$, and therefore $M_{f(\b)} \sub M_{\g+\b} = N_\b$. 
As $f$ is surjective, $(1)$ follows.
For $(2)$, note that $\g,\b,\seq{M_\xi}{\xi < \g+\b} \in M_{\g+\b}$ implies $\seq{M_{\g+\xi}}{\xi < \b} \in M_{\g+\b}$, which is to say $\seq{N_\xi}{\xi < \b} \in N_\b$.
The remaining parts of Definition~\ref{def:approx} are easy to verify, so $\seq{N_\b}{\b < \card{\a}}$ is a long $\lambda$-approximation sequence.
\end{proof}

\begin{lemma}\label{lem:c}
Let $\seq{M_\a}{\a < \mu}$ be a long $\lambda$-approximation sequence, and let $i < \daleth(\mu)$. Then $\set{M_\a}{\a \in \mathcal I_i(\mu)}$ is $\aleph_0$-directed. In particular, if $\daleth(\mu) = 1$ then $\set{M_\a}{\a < \mu}$ is $\aleph_0$-directed.
\end{lemma}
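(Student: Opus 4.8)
The plan is to cut the interval $\mathcal I_i(\mu)$ down to a ``column'' via Lemma~\ref{lem:a}, then run a transfinite induction in which Lemma~\ref{lem:b} supplies the step. Since $\set{M_\a}{\a\in\mathcal I_i(\mu)}=\set{M_{\lfloor\mu\rfloor_i+\b}}{\b<\dlt_i(\mu)}$ and, by Lemma~\ref{lem:a} (applied with $\mu$ in the role of ``$\a$'', after harmlessly extending the sequence past $\mu$), the shifted sequence $\seq{M_{\lfloor\mu\rfloor_i+\b}}{\b<\dlt_i(\mu)}$ is a long $\lambda$-approximation sequence, it is enough to prove: \emph{if $\seq{M_\a}{\a<\nu}$ is a long $\lambda$-approximation sequence and $\nu=\dlt_i(\mu)$ is a normal term --- so either $\nu<\lambda$, or $\nu$ is cardinally even with $\card\nu\geq\lambda$ --- then $\set{M_\a}{\a<\nu}$ is $\aleph_0$-directed}. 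Unwinding Definition~\ref{def:daviesproperty}, I only need to produce, for each finite $F=\{\a_1,\dots,\a_n\}\sub\nu$, an index $\gamma<\nu$ with $M_{\a_j}\sub M_\gamma$ for all $j$ --- then $M_\gamma$ lies in the set and contains $\bigcup_j M_{\a_j}$. When $\nu<\lambda$ this is immediate: for $\a\leq\b<\nu$ we have $\card\b<\lambda$, so $\b\sub M_\b$ since $M_\b$ is weakly $<\!\lambda$-closed (and $\b\in M_\b$ by Lemma~\ref{lem:facts}(2)), hence $\a\in M_\b$ and $M_\a\sub M_\b$ by Lemma~\ref{lem:facts}(3); thus $\set{M_\a}{\a<\nu}$ is even a $\sub$-chain.

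So suppose $\nu=\k\cdot\rho$ is cardinally even with $\k=\card\nu\geq\lambda$. I would argue by transfinite induction on $\nu$ among cardinally-even ordinals $\geq\lambda$; each reduction below passes to a strictly smaller such ordinal. If $\rho$ is a limit, then $\max F<\k\cdot\z$ for some $\z<\rho$, so $F$ lies in the proper, cardinally-even initial segment $[0,\k\cdot\z)$, and the inductive hypothesis applied to that restricted sequence (a long $\lambda$-approximation sequence by Lemma~\ref{lem:facts}(1)) gives the desired $\gamma$. If $\rho=\z+1$ with $\z\geq1$, I apply Lemma~\ref{lem:b} with $\a=\nu$: its final column $\seq{N_\b}{\b<\k}$, where $N_\b=M_{\k\cdot\z+\b}$, is a long $\lambda$-approximation sequence of length $\k<\nu$, every $M_\b$ with $\b<\nu$ lies below some $N_\b$, and $\set{N_\b}{\b<\k}$ is $\aleph_0$-directed by the inductive hypothesis; pushing each $M_{\a_j}$ into the $N$'s, choosing an $N$-bound $N_{\b^*}$, and reading the index back yields $\gamma=\k\cdot\z+\b^*<\k\cdot\z+\k=\nu$.

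The base of the induction, $\nu=\k$ a cardinal, carries the real content, and it splits by the type of $\k$. If $\k=\lambda$, then $\card{\max F}<\lambda$ and we are back in the chain case. If $\k$ is a limit cardinal, I pick a cardinal $\k'$ with $\lambda\leq\k'<\k$ and $\max F<\k'$ (for instance $\k'=\max(\lambda,\card{\max F}^+)$) and apply the inductive hypothesis to $\seq{M_\a}{\a<\k'}$. If $\k=(\k')^+$ is a successor cardinal --- so $\k'\geq\lambda$ --- I use that the ordinals $\k'\cdot\z$ are cofinal in $(\k')^+$ to pick $\z$ with $\max F<\k'\cdot(\z+1)<(\k')^+$, and apply Lemma~\ref{lem:b} with $\a=\k'\cdot(\z+1)$ (which has $\card\a=\k'$): this peels off a final column $\seq{N_\b}{\b<\k'}$ of the \emph{strictly smaller} cardinal length $\k'$ which dominates all the $M_{\a_j}$, and the inductive hypothesis at $\k'$, together with the same index translation, produces $\gamma=\k'\cdot\z+\b^*<\k'\cdot(\z+1)<\k$. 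I expect this last step --- the observation that even for a cardinal $\k=(\k')^+$ one can still invoke Lemma~\ref{lem:b}, by working inside a long enough column $\k'\cdot(\z+1)$ rather than on all of $\k$, and thereby drop down to the strictly smaller cardinal $\k'$ --- to be the main obstacle; the rest is bookkeeping with cardinal normal forms. Finally, the ``in particular'' clause is immediate, as $\daleth(\mu)=1$ forces $\mathcal I_0(\mu)=[0,\mu)$.
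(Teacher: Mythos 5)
Your argument is correct, and it is worth noting that the paper itself does not prove this lemma at all --- it simply cites Lemma 2.4 of \cite{Dave2} --- so what you have written is a self-contained reconstruction rather than a variant of an in-text proof. Your route is the natural one given the surrounding machinery: Lemma~\ref{lem:a} reduces the normal interval $\mathcal I_i(\mu)$ to a long $\lambda$-approximation sequence whose length is a single normal term $\nu$, and then a transfinite induction on cardinally even $\nu$ with $|\nu|\geq\lambda$ handles the rest, with Lemma~\ref{lem:b} supplying the ``final column'' at successor steps. All the cases check out: the $\nu<\lambda$ case really does give a $\subseteq$-chain (weak $<\!\lambda$-closure plus Lemma~\ref{lem:facts}(2),(3)); the limit-$\rho$ and successor-$\rho$ reductions pass to strictly smaller cardinally even ordinals; and your treatment of the base case $\nu=(\kappa')^+$ --- applying Lemma~\ref{lem:b} inside the column $\kappa'\cdot(\zeta+1)<(\kappa')^+$ rather than to all of $\nu$, so as to drop to the strictly smaller cardinal $\kappa'$ --- is exactly the step that makes the induction go through, and it is legitimate since $|\kappa'\cdot(\zeta+1)|=\kappa'$ puts that ordinal in the form $|\alpha|\cdot(\zeta+1)$ required by Lemma~\ref{lem:b}. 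Two small points deserve a sentence each if you write this up: first, applying Lemma~\ref{lem:a} with $\alpha=\mu$ requires the sequence to be extended one step past $\mu$ (harmless, by the closing-off construction of Lemma~\ref{lem:facts}(4), and only needed when $i=\daleth(\mu)-1$, since otherwise $\lfloor\mu\rfloor_{i+1}<\mu$ already serves as the ``$\alpha$''); second, the inductive hypothesis must be quantified over \emph{all} long $\lambda$-approximation sequences of the given length, not just initial segments of the original one, since Lemma~\ref{lem:b} produces a genuinely new sequence $\seq{N_\beta}{\beta<|\alpha|}$. You state both implicitly; neither is a gap.
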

\begin{proof}
This is proved as Lemma 2.4 in \cite{Dave2}.
\end{proof}

\begin{lemma}\label{lem:kisdefinable}
Let $\lambda$ and $\mu$ be infinite cardinals, with $\lambda$ regular, and let $\seq{M_\a}{\a < \mu}$ be a long $\lambda$-approximation sequence. If $\lambda$ is a successor, then $\lambda \in M_\a$ for all $\a > 0$. If $\lambda$ is weakly inaccessible, then $\lambda \in M_\a$ for all $\a \geq \lambda$.
\end{lemma}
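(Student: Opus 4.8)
The plan is to prove, in each case, that $\lambda$ is definable in $H_\theta$ from parameters that are available in $M_\a$, and then to quote the elementarity $M_\a \prec H_\theta$. For the weakly inaccessible case, assume $\a \geq \lambda$. I claim $\lambda = \sup\set{\card{M_\xi}}{\xi < \a}$. On the one hand each $\card{M_\xi} < \lambda$ by clause $(1)$ of Definition~\ref{def:approx}, so the supremum is $\leq \lambda$. On the other hand, for every $\xi < \lambda$ we have $\xi \in M_\xi$ by Lemma~\ref{lem:facts}(2), and since $\card{\xi} < \lambda$ and $M_\xi$ is weakly $<\!\lambda$-closed, this forces $\xi \sub M_\xi$, hence $\card{M_\xi} \geq \card{\xi}$; as $\lambda$ is a limit cardinal we have $\sup_{\xi < \lambda}\card{\xi} = \lambda$, and because $\a \geq \lambda$ all these indices $\xi$ actually occur, so the supremum is $\geq \lambda$. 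Thus $\lambda = \sup\set{\card{M_\xi}}{\xi < \a}$, which is visibly definable in $H_\theta$ from the sequence $\seq{M_\xi}{\xi < \a}$ (form the range of $\xi \mapsto \card{M_\xi}$ and take its supremum); since that sequence belongs to $M_\a$ by clause $(2)$ of Definition~\ref{def:approx}, elementarity gives $\lambda \in M_\a$.

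For the successor case, write $\lambda = \k^+$. Since $\a > 0$, the domain of $\seq{M_\xi}{\xi < \a}$ contains $0$, so $M_0$ is one of its entries and hence $M_0 \in M_\a$. It now suffices to see that $\k$ is definable in $H_\theta$ from $M_0$: indeed $\k$ is the least ordinal $\delta$ with ${}^{\delta}M_0 \not\sub M_0$. For $\delta < \k$ this inclusion holds because $M_0$ is $<\!\k$-closed; at $\delta = \k$ it fails, because $<\!\k$-closedness forces $\k \sub M_0$, hence $\card{M_0} = \k$ (using $\card{M_0} < \k^+$), and then $\card{{}^{\k}M_0} = 2^{\k} > \k = \card{M_0}$. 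So $\k$ is definable from $M_0 \in M_\a \prec H_\theta$, giving $\k \in M_\a$, and then $\lambda = \k^+ \in M_\a$ because $M_\a$ is closed under the successor-cardinal function.

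The step that needs care is the successor case's use of the $<\!\k$-closedness of $M_0$: being merely weakly $<\!\lambda$-closed is not enough, since a weakly $<\!\k^+$-closed elementary submodel of $H_\theta$ can have cardinality a limit cardinal strictly below $\k$ whenever $\k$ is not ``seen from below'' (for instance when $\k$ exceeds the supremum of the ordinals definable in $H_\theta$), and then neither $\k$ nor $\lambda$ is definable from $M_0$ alone. This is harmless for the paper's purposes, because the approximation sequences fed into the main construction are the \emph{closed} ones produced by Lemma~\ref{lem:facts}(5); alternatively, those sequences are taken over $p = \PP$, so that $\k = S(\PP)$ is already definable from the parameter $p \in M_0$, which again drops $\k$, hence $\lambda$, into $M_\a$. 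Apart from this point, the argument is a routine appeal to elementarity.
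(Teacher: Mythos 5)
Your weakly inaccessible case is exactly the paper's argument: $\lambda=\sup\set{\card{M_\xi}}{\xi<\a}$ is definable from $\seq{M_\xi}{\xi<\a}\in M_\a$. In the successor case you part ways with the paper, and the divergence is substantive. The paper's proof there is a one-liner: it asserts that weak $<\!\lambda$-closedness of $M_0$ already forces $\card{M_0}=\k$, whence $\lambda=\card{M_0}^+$ is definable from $M_0\in M_\a$. Your scepticism about that assertion is justified. Weak $<\!\k^+$-closedness only says that elements of $M_0$ of size $\le\k$ are subsets of $M_0$; it yields $\card{M_0}\ge\k$ only if $M_0$ happens to contain a set of size in $[\k,\k^+)$, and nothing in Definition~\ref{def:approx} guarantees this. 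Indeed, a transitive $M_0\prec H_\theta$ (e.g.\ $M_0=L_\rho\prec L_\theta$ under $\VL$; such $\rho$ form a club in $\theta$) is weakly $<\!\lambda$-closed for every $\lambda$ simultaneously, so taking $\k$ to be any cardinal above $\card{\rho}$ produces a legitimate $M_0$ with $\card{M_0}<\k$; continuing the sequence with further such $L_{\rho'}$, $\rho'<\k$, even refutes the successor clause for non-closed sequences. Your repair is the right one: for \emph{closed} sequences $M_0$ is $<\!\k$-closed, so $\k\sub M_0$ and $\card{M_0}=\k$ at once (your ``least $\delta$ with ${}^{\delta}M_0\not\sub M_0$'' is a more roundabout route to the same definability), and you are also right that the restriction is harmless: the successor case is only ever needed, via Lemma~\ref{lem:d} and Lemma~\ref{lem:square}, for closed sequences and for the sequences Lemma~\ref{lem:b} extracts from them, whose entries are among the original models and hence still $<\!\k$-closed. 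So your proof is correct where the paper's is, and your cautionary remark puts its finger on a genuine, though easily repaired, gap in the paper's own treatment of the successor case.
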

\begin{proof}
If $\lambda$ is a successor, say $\lambda = \k^+$, the fact that $M_0$ is weakly $\lambda$-closed implies $\card{M_0} = \k$. Thus $\lambda$ is definable from $M_0$. As $M_0 \in M_\a$ for all $\a > 0$, this implies $\lambda \in M_\a$.
If $\lambda$ is weakly inaccessible, then $\card{M_\xi} < \lambda$ for all $\xi < \mu$. However, $\card{M_\xi} \geq \card{\xi}$ for all $\xi < \lambda$, because $\xi \in M_\xi$ and, as $M_\xi$ is weakly $\lambda$-closed, $\xi \sub M_\xi$. Hence $\lambda = \sup \set{\card{M_\xi}}{\xi < \a}$ whenever $\a \geq \lambda$, and this defines $\lambda$ from $\seq{M_\xi}{\xi < \a}$ in $M_\a$.
\end{proof}

\begin{lemma}\label{lem:d}
Assume \gch. Let $\lambda$ and $\mu$ be infinite cardinals, with $\lambda$ regular, and let $\seq{M_\a}{\a < \mu}$ be a long $\lambda$-approximation sequence. Then $\set{M_\a}{\a < \mu}$ is $\min\{ \cf(\mu),\lambda\}$-directed.
\end{lemma}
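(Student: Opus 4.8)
The plan is to prove the statement by induction on $\mu$, holding $\lambda$ (and \gch) fixed. Unravelling the definition of ``$\k$-directed,'' it suffices to show: for every $I\subseteq\mu$ with $|I|<\theta:=\min\{\cf(\mu),\lambda\}$ there is some $\b^*<\mu$ with $M_\iota\subseteq M_{\b^*}$ for all $\iota\in I$. Since $|I|<\theta\le\cf(\mu)$, the ordinal $\delta:=\sup\{\iota+1:\iota\in I\}$ satisfies $\delta<\mu$ and $I\subseteq\delta$, and the whole argument is organized around the size of $\delta$ relative to $\lambda$, together with the block-reduction of Lemma~\ref{lem:b}.

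First I would dispose of two easy situations. If $|\delta|<\lambda$, then $\delta\in M_\delta$ by Lemma~\ref{lem:facts}(2), and as $M_\delta$ is weakly $<\!\lambda$-closed this forces $\delta\subseteq M_\delta$; hence $\iota\in M_\delta$ for each $\iota\in I$, and Lemma~\ref{lem:facts}(3) gives $M_\iota\subseteq M_\delta$, so $M_\delta$ is the required bound. If $\mu$ is singular, fix regular cardinals $\mu_\xi\ge\lambda$ ($\xi<\cf(\mu)$) increasing to $\mu$; then $\sup I<\mu_\xi$ for some $\xi$, so $I\subseteq\mu_\xi$, and applying the inductive hypothesis to the long $\lambda$-approximation sequence $\seq{M_\a}{\a<\mu_\xi}$ (a restriction, by Lemma~\ref{lem:facts}(1)), which is $\lambda$-directed because $\mu_\xi$ is regular and $\ge\lambda$, yields some $\b^*<\mu_\xi<\mu$ doing the job, since $|I|<\lambda\le\mu_\xi$. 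So from now on $\mu$ is regular, $\mu>\lambda$, and $\nu:=|\delta|$ satisfies $\lambda\le\nu<\mu$.

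Now I would run the block-reduction. Writing $\delta=\nu\cdot\z_0+r_0$ with $r_0<\nu$ and setting $\a:=\nu\cdot(\z_0+1)$, we get $I\subseteq\delta<\a$ and $|\a|=\nu<\mu$, so $\a<\mu$. Lemma~\ref{lem:b} then supplies a long $\lambda$-approximation sequence $\seq{N_\b}{\b<\nu}$ (with $N_\b=M_{\nu\cdot\z_0+\b}$) of \emph{cardinal} length $\nu<\mu$ such that every $M_\gamma$ with $\gamma<\a$ — in particular each $M_\iota$, $\iota\in I$ — lies inside some $N_\b$; picking $j(\iota)<\nu$ with $M_\iota\subseteq N_{j(\iota)}$ and setting $J:=\{j(\iota):\iota\in I\}$, it suffices to bound $\{N_j:j\in J\}$ by some $N_{\b^*}$ with $\b^*<\nu$, for then $N_{\b^*}=M_{\nu\cdot\z_0+\b^*}$ with $\nu\cdot\z_0+\b^*<\a\le\mu$. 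When $\nu$ is regular (or, more generally, $\cf(\nu)\ge\lambda$), the inductive hypothesis says $\{N_\b:\b<\nu\}$ is $\lambda$-directed and we are done, as $|J|\le|I|<\lambda$; and if $\nu^+<\mu$ we may instead pass to $\seq{M_\b}{\b<\nu^+}$ and invoke the inductive hypothesis at the regular cardinal $\nu^+$. Thus the single remaining case is $\mu=\nu^+$ with $\nu$ singular of cofinality $\rho:=\cf(\nu)<\lambda$.

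The main obstacle, as I expect it, is exactly this last case. Here one iterates the analysis inside $\seq{N_\b}{\b<\nu}$: decomposing $\nu$ as an increasing union of regular cardinals $\nu_\eta\ge\lambda$ ($\eta<\rho$), each $J\cap\nu_\eta$ is bounded below $\nu_\eta$, so the inductive hypothesis at $\nu_\eta$ produces $c_\eta<\nu_\eta$ with $N_j\subseteq N_{c_\eta}$ for $j\in J\cap\nu_\eta$; feeding $\{c_{\eta'}:\eta'<\eta\}$ back in at each stage (a set of size $<\lambda$, since $\rho<\lambda$) one arranges the $N_{c_\eta}$ to be $\subseteq$-increasing, so that $\bigcup_{\eta<\rho}N_{c_\eta}$ contains every $N_j$ and has size $<\lambda$. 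What remains is to \emph{absorb} this $\rho$-indexed increasing union of models — whose indices are cofinal in $\a$ — into a single $M_{\b^*}$ with $\b^*<\mu$; equivalently, to show that $I$ itself, hence (via the sequence $\seq{M_\xi}{\xi<\a}\in M_{\b^*}$) the set $\{M_\iota:\iota\in I\}$, is an element of some $M_{\b^*}$, from which $M_\iota\subseteq M_{\b^*}$ follows by weak $<\!\lambda$-closedness. It is here that \gch is genuinely used: via the computation $\nu^{<\lambda}=\nu^+$ (valid since $\rho<\lambda\le\nu$), the family $\{M_\a:\a<\mu\}$ is both large enough and — by the approximation structure and Lemma~\ref{lem:kisdefinable} — coherent enough that the $<\!\lambda$-sized sets of the form $M_\a\cap\delta$ are cofinal in $([\delta]^{<\lambda},\subseteq)$. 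Carrying out this absorption carefully, and checking that the recursions (on $\mu$, and within the exceptional case on the strictly decreasing cardinals that appear) terminate, is the technical heart of the proof; everything else is bookkeeping on top of Lemmas~\ref{lem:a}--\ref{lem:c}.
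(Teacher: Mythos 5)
Your induction scheme and the case reductions are sound as far as they go, and you have correctly isolated the crux: the case $\mu=\nu^+$ with $\nu\geq\lambda$ singular of cofinality $<\lambda$. (The paper actually treats all of $\mu=\nu^+$, $\nu\geq\lambda$, uniformly, with no detour through Lemma~\ref{lem:b} and no case split on $\cf(\nu)$; your extra reductions are correct but unnecessary.) The problem is that at exactly that crux your proposal stops proving and starts describing. The increasing chain $N_{c_0}\sub N_{c_1}\sub\dots$ you construct is never used: you concede it must be ``absorbed'' into a single $M_{\b^*}$, and then you replace that task with the assertion that the sets $M_\a\cap\dlt$ are cofinal in $([\dlt]^{<\lambda},\sub)$. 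That assertion does not follow from the cardinal arithmetic $\nu^{<\lambda}=\nu^+$ plus ``coherence''; it is essentially a restatement of the $\lambda$-directedness you are trying to prove, since once $I\sub M_\a\cap\dlt$ for some $\a>\dlt$, Lemma~\ref{lem:facts}(3) immediately bounds $\set{M_\iota}{\iota\in I}$. So the technical heart of the argument is missing, not merely deferred.

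Here is how the paper closes the gap, for comparison. Since $\mu=\nu^+=\mu^{<\lambda}$ by \gch, and $\nu,\lambda\in M_\nu$ (the latter by Lemma~\ref{lem:kisdefinable}), elementarity puts a surjection $f:\mu\to[\mu]^{<\lambda}$ inside $M_\nu$. Given $I\in[\mu]^{<\lambda}$, write $I=f(\a)$ and set $\b=\sup f(\a)$. Because $\daleth(\mu)=1$, Lemma~\ref{lem:c} makes $\set{M_\g}{\g<\mu}$ $\aleph_0$-directed, so some single $M_\g$ contains all three of $\a$, $f$, and $M_\b$. Then $I=f(\a)\in M_\g$, weak $<\!\lambda$-closure gives $I\sub M_\g$, and $M_\b\in M_\g$ forces $\b<\g$, so every $\xi\in I$ satisfies $\xi\in M_\g$ and $\xi<\g$, whence $M_\xi\sub M_\g$ by Lemma~\ref{lem:facts}(3). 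The two ingredients your sketch lacks are precisely these: coding the arbitrary set $I$ as a value of a function that lives in the models, and invoking the $\aleph_0$-directedness of Lemma~\ref{lem:c} to gather the code $\a$, the decoder $f$, and the witness $M_{\sup I}$ into one model whose index necessarily exceeds $\sup I$. A pure counting argument cannot produce a model containing a prescribed $I$, which is why your final paragraph cannot be completed as written.
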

\begin{proof}
Let us fix $\lambda$ and proceed by induction on $\mu$. If $\mu \leq \lambda$, then $M_\b \sub M_\a$ for all $\b < \a < \mu$, because $M_\a$ is weakly $<\!\lambda$-closed and $\seq{M_\b}{\b < \a} \in M_\a$. This implies $\set{M_\a}{\a < \mu}$ is $\min\{ \cf(\mu),\lambda\}$-directed.
So assume $\mu > \lambda$. 

If $\mu$ is a limit cardinal, there is an increasing sequence $\seq{\nu_\xi}{\xi < \cf(\mu)}$ of regular cardinals in $[\lambda,\mu)$ with limit $\mu$. By the inductive hypothesis, $\set{M_\a}{\a < \nu_\xi}$ is $\lambda$-directed for each $\xi < \cf(\mu)$. Thus $\set{M_\a}{\a < \mu}$ is the union of an increasing, length-$\cf(\mu)$ chain of $\lambda$-directed sets. It follows that it is $\min\{ \cf(\mu),\lambda\}$-directed.

Finally, suppose $\mu = \nu^+$ for some cardinal $\nu \geq \lambda$. This implies $\lambda = \min\{\cf(\mu),\lambda\}$, so we wish to show that $\set{M_\xi}{\xi < \mu}$ is $\lambda$-directed. Note that $\nu \in M_\nu$ implies $\mu = \nu^+ \in M_\nu$. Furthermore, $\lambda \in M_\nu$ by Lemma~\ref{lem:kisdefinable}, and $\mu = \mu^{<\lambda}$ by \gch. Thus there is a surjection $f : \mu \to [\mu]^{<\lambda}$, and (by elementarity) there is some such $f$ in $M_\nu$. 
Let $\a < \mu$ and $\b = \sup f(\a)$.
As $\mu$ is a cardinal, $\daleth(\mu) = 1$ and so it follows from Lemma~\ref{lem:c} that $\set{M_\a}{\a < \mu}$ is $\aleph_0$-directed.
Therefore there is some $\g < \mu$ such that $\a,M_\b,f \in M_\g$.
Then $f(\a) \in M_\g$ and as $M_\g$ is weakly $\lambda$-closed, this implies $f(\a) \sub M_\g$. 
Also, $M_\b \in M_\g$ implies $\b < \g$, so $\xi < \g$ for every $\xi \in f(\a)$. As $\seq{M_\xi}{\xi < \g} \in M_\g$, this (along with $f(\a) \sub M_\g$) implies $M_\xi \in M_\g$ for every $\xi \in f(\a)$. As $M_\g$ is weakly $\lambda$-closed, this implies $M_\xi \sub M_\g$ for every $\xi \in f(\a)$. Given our choice of $f$, this shows the union of every $<\!\lambda$-sized subset of $\set{M_\a}{\a < \mu}$ is contained in some member of $\set{M_\a}{\a < \mu}$.
\end{proof}

By Lemmas \ref{lem:facts}$(5)$, \ref{lem:sage}, and \ref{lem:d}, we will be done if we can show that every closed long $\k^+$-approximation sequence has the $\k$-sage Davies property. To show this, we will use the following family of combinatorial principles.

\begin{definition}
Let $\lambda \geq \eta \geq \k$ be infinite cardinals with $\eta$ and $\k$ regular. The Very Weak Square principle for $(\lambda,\eta,\k)$, denoted $\vws(\lambda,\eta,\k)$, is the statement that there is a sequence $\seq{C_\a}{\a < \lambda^+}$ and a club $D \sub \lambda^+$ such that, for each $\a \in D$ with $\cf(\a) \geq \eta$,
\begin{itemize}
\item[$\circ$] $C_\a$ is a cofinal subset of $\a \cap D$ with order type $\cf(\a)$, and
\item[$\circ$] for each $X \in [C_\a]^{<\k}$, there is some $\xi < \a$ such that $X \sub C_\xi \in [C_\a]^{<\k}$.
\end{itemize}
\end{definition}

Let us note the following facts concerning $\vws(\lambda,\eta,\k)$:
\begin{itemize}
\item[$\circ$] $\vws(\lambda,\eta,\k)$ implies $\vws(\lambda,\eta',\k)$ whenever $\eta \leq \eta'$.
\item[$\circ$] $\lambda^{<\k} = \lambda$ implies $\vws(\lambda,\k,\k)$: set $E = \set{\a < \lambda^+}{\cf(\a) \geq \k}$ and find an enumeration $\seq{C_\a}{\a \in \lambda^+ \setminus E}$ of $(\lambda^+)^{<\k}$ such that $C_\a \sub \a$ for all $\a \in \lambda^+ \setminus E$, and then, for each $\a \in E$, take $C_\a$ to be any unbounded subset of $\a$ with order type $\cf(\a)$; finally, take $D$ to be the closure in $\lambda^+$ of the set of all $\a < \lambda^+$ satisfying $[\a]^{<\k}\subset\set{C_\b}{\b<\a}$.
\item[$\circ$] If \gch holds then $\set{\a < \lambda^+}{\cf(\a) \geq \k} \in I[\lambda^+]$ implies $\vws(\lambda,\k,\k)$, where $I[\lambda^+]$ denotes the approachability ideal on $\lambda^+$ (cf. \cite{Shelah,Dzamonja&Shelah,Foreman&Magidor}). We note that $\set{\a < \lambda^+}{\cf(\a) \geq \k} \in I[\lambda^+]$ follows from $\square^*_\lambda$. In particular, $\gch+\forall \k\leq\eta\leq\lambda \, \vws(\lambda,\eta,\k)$ is consistent (assuming \zfc is); it follows, for example, from $\VL$ \cite{Jensen}.
\item[$\circ$] The principle $\vws(\lambda,\eta,\k)$ generalizes the Very Weak Square Principle of Foreman and Magidor \cite{Foreman&Magidor}: their principle at $\lambda$, in our notation, is $\vws(\lambda,\aleph_1,\aleph_1)$. Foreman and Magidor show that $\vws(\lambda,\aleph_1,\aleph_1)$ is strictly weaker than $\gch+\square^*_\lambda$, and (unlike $\square^*_\lambda$) $\forall\lambda\,\vws(\lambda,\aleph_1,\aleph_1)$ is consistent with \gch plus the existence of very large cardinals, such as supercompact and huge cardinals.
\end{itemize}

Foreman and Magidor showed that
$\mathsf{GCH}+\neg\vws(\aleph_\omega,\aleph_n,\aleph_1)$
is consistent relative to large cardinals if $n=1$, but the
cases $n=2,3,4,\ldots$ are open problems.
Results from pcf theory motivate the following question:
\begin{question}
Does $\mathsf{GCH}$ imply $\vws(\aleph_\omega,\aleph_4,\aleph_1)$?
\end{question}
We note that Foreman and Magidor asked a similar question in \cite{Foreman&Magidor}, namely whether $\mathsf{GCH}$ implies
a principle similar to $\vws(\aleph_\omega,\aleph_2,\aleph_1)$.

We are ready at last to prove the main technical lemma showing that (under the right hypotheses) the simple construction from Lemma~\ref{lem:facts}$(5)$ produces $\k$-sage Davies trees.

\begin{lemma}\label{lem:square}
Assume \gch and assume that $\vws(\lambda,\eta,\k)$ holds for every singular $\lambda \in [\eta,\mu)$ with $\cf(\lambda) < \k$, where $\k$, $\eta$, and $\mu$ are infinite regular cardinals with $\k \leq \eta$. Then every closed long $\k^+$-approximation sequence of length $\mu$ has the $(\eta,\k)$-Davies property.
\end{lemma}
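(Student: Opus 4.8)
The plan is to prove, by transfinite induction on $\a\le\mu$, that any long $\k^+$-approximation sequence $\seq{M_\b}{\b<\a}$ admits a family $\mathcal N_\a$ with $\card{\mathcal N_\a}<\eta$, with $\bigcup_{\b<\a}M_\b=\bigcup\mathcal N_\a$, and with every member of $\mathcal N_\a$ equal to a union of some $\k$-directed subset of $\set{M_\b}{\b<\a}$ --- call such a set \emph{good}. Collecting the $\mathcal N_\a$ for all $\a\le\mu$ then gives the $(\eta,\k)$-Davies property. (The closedness hypothesis is inherited by every subsequence produced below, and, together with \gch, enters only through Lemmas~\ref{lem:c} and~\ref{lem:d}; $\vws$ will be used at a single point.) The cases $\a=0$ and $\a=\b+1$ are immediate --- for the latter take $\mathcal N_{\b+1}=\mathcal N_\b\cup\{M_\b\}$, using that $\eta$ is infinite and regular and that a singleton is $\k$-directed.

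For limit $\a$, the first step is to pass to the $\k^+$-truncated cardinal normal form. Then $[0,\a)$ is the disjoint union of the $\daleth(\a)$ normal intervals $\mathcal I_j(\a)$ (computed with $\lambda=\k^+$), each of order type a normal term $\dlt_j(\a)$ that is either cardinally even of cardinality $\ge\k^+$, or else $<\k^+$; and by Lemma~\ref{lem:a} each block $\seq{M_{\lfloor\a\rfloor_j+\xi}}{\xi<\dlt_j(\a)}$ is again a long $\k^+$-approximation sequence. As $\daleth(\a)$ is finite and $\eta$ is infinite, it suffices to cover each $\bigcup_{\b\in\mathcal I_j(\a)}M_\b$ by fewer than $\eta$ good sets. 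A block of length $<\k^+$ is $\subseteq$-increasing (the argument from the case $\mu\le\lambda$ of Lemma~\ref{lem:d}), and such a chain is a single good set when its order type is a successor or has cofinality $\ge\k$, and otherwise is handled by a short cofinal recursion of length $<\k\le\eta$. So the whole lemma reduces to the following \emph{subclaim}, proved by a secondary induction on a cardinally even $\dlt$ with $\nu:=\card\dlt\ge\k^+$, \emph{using the main inductive hypothesis for all ordinals $<\dlt$}: every long $\k^+$-approximation sequence of length $\dlt=\nu\cdot\rho$ is covered by fewer than $\eta$ good sets.

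For the subclaim I split on the shape of $\rho$ and the value of $\cf(\nu)$, and four of the five resulting cases reduce or terminate via Lemmas~\ref{lem:a}--\ref{lem:d} alone. If $\rho$ is a limit with $\cf(\rho)<\eta$, write $\dlt$ as the increasing union of the $\cf(\rho)$ shorter blocks $\nu\cdot\z_i$ ($\z_i$ cofinal in $\rho$), apply the secondary hypothesis to each, and amalgamate --- legitimate since $\eta$ is regular. If $\rho=\z+1$ with $\z\ge1$, Lemma~\ref{lem:b} replaces the sequence by a coarser one of length $\nu<\dlt$ whose models lie in $\set{M_\b}{\b<\dlt}$, and the secondary hypothesis applies to it. If $\dlt=\nu$, Lemma~\ref{lem:d} gives that $\set{M_\b}{\b<\nu}$ is $\min\{\cf(\nu),\k^+\}$-directed, which is a single good set if $\cf(\nu)\ge\k$, while if $\cf(\nu)<\k$ we decompose $\nu$ into smaller cardinals along a cofinal sequence of length $\cf(\nu)<\k\le\eta$ and amalgamate. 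Finally, if $\rho$ is a limit with $\cf(\rho)\ge\eta$ \emph{and} $\cf(\nu)\ge\k$, then any fewer than $\k$ of the $M_\b$ have indices below a common $\nu\cdot\z_{i^*}$ with $\z_{i^*}$ a successor, and coarsening there (Lemma~\ref{lem:b}) together with $\k$-directedness of the resulting length-$\nu$ sequence (Lemma~\ref{lem:d}) produces a common upper model, so $\set{M_\b}{\b<\dlt}$ is itself $\k$-directed.

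The one remaining case is the heart of the proof: $\rho$ a limit with $\cf(\rho)\ge\eta$ \emph{and} $\cf(\nu)<\k$. Here $\nu$ is singular with $\eta\le\cf(\rho)\le\card\rho\le\nu<\mu$ and $\cf(\nu)<\k$, so by hypothesis $\vws(\nu,\eta,\k)$ holds; fix a witnessing $\seq{C_\b}{\b<\nu^+}$ and club $D$, and note $\dlt<\nu^+$. After a routine reduction (intersect $D$ with a suitable club, and enlarge $\dlt$ to the next point of $D$) we may assume $\dlt\in D$, so $C_\dlt$ is cofinal in $\dlt\cap D$ of order type $\cf(\dlt)\ge\eta$, and every $<\!\k$-sized subset of $C_\dlt$ is contained in some $C_\xi\in[C_\dlt]^{<\k}$ with $\xi<\dlt$. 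A naive recursion along $C_\dlt$ would produce $\cf(\dlt)\ge\eta$ good sets; instead one runs the main inductive hypothesis on the initial segments below the points of $C_\dlt$ and then \emph{consolidates} the resulting sub-covers, indexing the final good sets both by the $\subseteq$-cofinal family of $<\!\k$-approximations from $\vws$ and by the $\cf(\nu)<\k$ blocks of a cofinal decomposition of $\nu$. The coherence clause of $\vws$ is precisely what forces each consolidated set to be a $\k$-directed union of models of the sequence, and the chain of inequalities $\cf(\nu)<\k\le\eta\le\cf(\dlt)$ is precisely what keeps the number of consolidated sets below $\eta$. Executing this consolidation --- establishing both the $\k$-directedness and the cardinality bound --- is the principal obstacle, and is the sole place where $\vws(\nu,\eta,\k)$, with all three of its parameters, is needed.
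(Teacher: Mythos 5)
Your reduction to the hard case is essentially the paper's own: the same induction on the length $\a$, the same splitting along the $\k^+$-truncated cardinal normal form (Lemma~\ref{lem:a}), the same use of Lemma~\ref{lem:b} to coarsen when $\rho$ is a successor, Lemma~\ref{lem:d} for directedness when the relevant cofinalities are large, and a cofinal amalgamation of length $<\eta$ when $\cf(\rho)<\eta$. All of those routine cases check out. The problem is that the one case that actually carries the content of the lemma --- $\rho$ a limit with $\cf(\rho)\ge\eta$ and $\cf(\nu)<\k$ --- is not proved. What you give there is a statement of intent (``runs the main inductive hypothesis on the initial segments below the points of $C_\dlt$ and then consolidates the resulting sub-covers''), and you yourself flag the execution of this consolidation as ``the principal obstacle.'' That obstacle is the lemma: everything up to that point follows from Lemmas~\ref{lem:a}--\ref{lem:d} by bookkeeping, and $\vws$ has not yet been touched.

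Moreover, the plan as sketched is not obviously executable. The sub-covers $\mathcal N^\g$ obtained from the inductive hypothesis at distinct points $\g\in C_\dlt$ bear no coherence to one another, and a union of $\k$-directed families of models is not $\k$-directed; the coherence clause of $\vws$ speaks about $<\!\k$-sized subsets of $C_\dlt$ being captured by small $C_\xi$'s, not about compatibility of independently chosen covers. The paper does something different in this case: it does \emph{not} invoke the inductive hypothesis at all, but builds the good sets directly, as $N_\xi=\bigcup\set{M_{\lambda\cdot\dlt+\g}}{(\dlt,\g)\in A\times\nu_\xi}$ for $\xi$ ranging over a tail of $\cf(\lambda)$, where $A$ is an unbounded subset of $\b$ extracted from a carefully thinned subset $B$ of $C_{f(\b)}$ and $\seq{\nu_\xi}{\xi<\cf(\lambda)}$ is a cofinal sequence of regular cardinals below $\lambda$. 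The cardinality bound $\card{\mathcal N}=\cf(\lambda)<\k\le\eta$ is then immediate, and the real work is the verification that each $N_\xi$ is a $\k$-directed union, which requires a pigeonhole argument over the fibers of an auxiliary function $a:[B]^{<\k}\to\cf(\lambda)$, plus a delicate definability/elementarity argument showing that the relevant parameters ($g$, $c$, $\e$, $\pi$, $\varsigma$, $\z$, \dots) land inside a single model $M_{\lambda\cdot\dlt+\g}$ of the family. None of that is recoverable from your outline, so the proposal as it stands has a genuine gap precisely where $\vws(\lambda,\eta,\k)$ must be used.
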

\begin{proof}
We will prove that for every closed long $\k^+$-approximation sequence $\seq{M_\xi}{\xi < \a}$, there is a set $\mathcal N$ such that
\begin{itemize}
\item[$(i)$] each $N \in \mathcal N$ is the union of a $\k$-directed subset of $\set{M_\xi}{\xi < \a}$,
\item[$(ii)$] $\card{\mathcal N} < \eta$, and
\item[$(iii)$] $\bigcup_{\xi < \a}M_\xi = \bigcup \mathcal N.$
\end{itemize}
This implies the conclusion of the lemma, because if $\seq{M_\a}{\a < \mu}$ is any $\mu$-length closed long $\k^+$-approximation sequence, then every initial segment $\seq{M_\xi}{\xi < \a}$ is also a closed long $\k^+$-approximation sequence and therefore admits a set $\mathcal N_\a$ with the three properties described above. This is precisely the definition of the $(\eta,\k)$-Davies property for $\seq{M_\a}{\a < \mu}$.

The proof that every $\seq{M_\xi}{\xi < \a}$ admits an $\mathcal N$ as described above proceeds by induction on $\a$.

For the base case (and the first $\eta$ cases after that): if $\a < \eta$, then we may take $\mathcal N = \set{M_\xi}{\xi < \a}$. This trivially meets our requirements.

If $\a = \k = \eta$, then we may take $\mathcal N = \left\{ \bigcup_{\xi < \a}M_\xi \right\}$. This trivially satisfies $(ii)$ and $(iii)$, and it satisfies $(i)$ by Lemma~\ref{lem:d}.

In all remaining cases, $\a > \k$. This is assumed for the remainder of the proof. At this point, our argument breaks into several cases.
In what follows, $\daleth(\a)$ is defined using $\lambda = \k^+$ in Definition~\ref{def:daleth}.

\vspace{3mm}
\noindent \emph{Case 1:} Suppose $\daleth(\a) \geq 2$. 
\vspace{1mm}

For each $j < \daleth(\a)$, define $N^j_\b = M_{\lfloor \a \rfloor_j + \b}$ for every $\b < \dlt_j(\a)$. By Lemma~\ref{lem:a}, $\seq{N^j_\b}{\b < \dlt_j(\a)}$ is a closed long $\k^+$-approximation sequence for each $j < \daleth(\a)$. Furthermore, $\daleth(\a) \geq 2$ implies $\dlt_j(\a) < \a$ for each $j < \daleth(\a)$. Therefore, by induction, for each sequence $\seq{N^j_\b}{\b < \dlt_j(\a)}$ there is a set $\mathcal N^j$ satisfying the three properties listed above. But then $\mathcal N = \bigcup_{j < \daleth(\a)} \mathcal N^j$ meets our requirements. Properties $(i)$ and $(ii)$ are automatic, because they hold for the $\mathcal N^j$, and $(iii)$ follows from the fact that $\seq{M_\xi}{\xi < \a}$ is a concatenation of the sequences $\seq{N^j_\b}{\b < \dlt_j(\a)}$, so that $\bigcup_{\xi < \a} M_\xi = \bigcup_{j < \daleth(\a)}\bigcup_{\b < \dlt_j(\a)}N^j_\b = \bigcup_{j < \daleth(\a)}\bigcup \mathcal N^j = \bigcup \mathcal N$.

\vspace{3mm}
\noindent \emph{Case 2:} Suppose $\cf(\a) < \eta$.
\vspace{1mm}

In this case, let $C$ be an unbounded subset of $\a$ with $|C| = \cf(\a)$. For each $\g \in C$, there is, by the inductive hypothesis, some $\mathcal N^\g$ satisfying the three requirements described above for $\seq{M_\xi}{\xi < \g}$. Let $\mathcal N = \bigcup_{\g \in C} \mathcal N^\g$. Then $\mathcal N$ satisfies $(i)$ because each $\mathcal N^\g$ does. Also, $\mathcal N$ satisfies $(ii)$ because each $\mathcal N^\g$ does and $|C| < \eta$. Finally, $\mathcal N$ satisfies $(iii)$ because $\bigcup_{\xi < \a} M_\xi = \bigcup_{\g \in C} \bigcup_{\xi < \g}M_\xi = \bigcup_{\g \in C}\mathcal N^\g = \bigcup \mathcal N$.

\vspace{3mm}
\noindent \emph{Case 3:} Suppose $\daleth(\a) = 1$ and $\cf(\a) \geq \eta$.
\vspace{1mm}

Because $\daleth(\a) = 1$, there is a unique ordinal $\b > 0$ such that $\a = |\a| \cdot \b$. Note that because $\cf(\a) \geq \eta$, either $\b$ is a successor ordinal or else $\cf(\b) \geq \eta$. This case breaks into four sub-cases, depending on what $\b$ is.

\vspace{3mm}
\noindent \emph{Case 3a:} Suppose $\b = 1$.
\vspace{1mm}

In this case, take $\mathcal N = \left\{ \bigcup_{\xi < \a}M_\xi \right\}$. This trivially satisfies $(ii)$ and $(iii)$. Lemma~\ref{lem:d} implies $\set{M_\xi}{\xi < \a}$ is $\min\{\cf(\a),\k^+\}$-directed, and in this case $\cf(\a) \geq \eta \geq \k$; thus $(i)$ holds.

\vspace{3mm}
\noindent \emph{Case 3b:} Suppose $\b = \g+1$ for some $\g \geq 1$.
\vspace{1mm}

In this case, let $N_\z = M_{|\a| \cdot \g + \z}$ for each $\z < |\a|$, and let $\seq{N_\z}{\z < |\a|}$. By Lemma~\ref{lem:b}, $\seq{N_\z}{\z < |\a|}$ is a long $\k^+$-approximation sequence with the property that $\bigcup \set{N_\z}{\z < |\a|} = \bigcup \set{M_\xi}{\xi < \a}$. As $|\a| < \a$, the inductive hypothesis implies there is some $\mathcal N$ satisfying our requirements for $\seq{N_\z}{\z < |\a|}$. But then it is clear that $\mathcal N$ also satisfies our requirements for $\seq{M_\xi}{\xi < \a}$.

\vspace{3mm}
\noindent \emph{Case 3c:} Suppose $\b$ is a limit ordinal and $\cf(|\a|) \geq \k$.
\vspace{1mm}

In this case, as in case 3a, take $\mathcal N = \left\{ \bigcup_{\xi < \a}M_\xi \right\}$. This trivially satisfies $(ii)$ and $(iii)$.
Towards showing that $(i)$ holds, suppose $J \in [\a]^{<\k}$; we will and find $\dlt < \a$ such that $\bigcup_{\xi \in J}M_\xi \sub M_\dlt$.
Note that $\cf(\a) \geq \eta$ and $\a = |\a| \cdot \b$ implies $\cf(\b) \geq \eta \geq \k$.
Therefore, we may choose $\g < \b$ such that $\sup(J) < |\a| \cdot \g$.
From $\g$ define $\seq{N_\z}{\z < |\a|}$ as in case 3b.
By Lemma~\ref{lem:b}, $\seq{N_\z}{\z < |\a|}$ is a long $\k^+$-approximation sequence such that for every $\xi < |\a| \cdot \g$ we may choose $h(\xi) < |\a|$ such that $M_\xi \sub N_{h(\xi)}$.
By Lemma~\ref{lem:d}, $\set{N_\z}{\z < |\a|}$ is $\min\{\cf(|\a|),\k^+\}$-directed, and in this case $\cf(|\a|) \geq \k$.
Therefore, there is some $\z < |\a|$ such that $\bigcup_{\xi \in J} N_{h(\xi)} \sub N_\z$; hence, $\bigcup_{\xi \in J} M_\xi \sub M_\dlt$ where $\dlt = |\a| \cdot \g + \z$.
Thus, $(i)$ holds.

\vspace{3mm}
\noindent \emph{Case 3d:} Suppose $\b$ is a limit ordinal and $\cf(|\a|) < \k$.
\vspace{1mm}

This is the last and most difficult case, and it is where we use our Very Weak Square hypothesis. Note that, as in case 3c, $\cf(\b) \geq \eta$.

Let $\lambda = |\a|$ and $\chi = \cf(\lambda)$. Observe that
$$\lambda^+ > \a > \lambda > \eta \geq \k > \chi.$$
(The first, fourth, and fifth inequalities follow directly from our assumptions and definitions, while the second and third hold, respectively, because $\b > 1$, and because $\lambda \geq \eta$ and $\lambda$ is singular while $\eta$ is regular.) 

Each of $\lambda$, $\eta$, and $\k$ is in $M_{\lambda+\eta+\k}$, because each is definable via the cardinal normal form of $\lambda+\eta+\k$. Applying $\vws(\lambda,\eta,\k)$ inside $M_{\lambda+\eta+\k}$, there is
a sequence $\seq{C_\dlt}{\dlt < \lambda^+}$ and a club $D \sub \lambda^+$, with $D,\seq{C_\dlt}{\dlt < \lambda^+} \in M_{\lambda+\eta+\k}$, such that, for each $\dlt \in D$ with $\cf(\dlt) \geq \eta$,
\begin{itemize}
\item[$\circ$] $C_\dlt$ is a cofinal subset of $\dlt \cap D$ with order type $\cf(\dlt)$, and
\item[$\circ$] for each $X \in [C_\dlt]^{<\k}$, there is some $\xi < \dlt$ such that $X \sub C_\xi \in [C_\dlt]^{<\k}$.
\end{itemize}
By thinning out $D$ if necessary, we may (and do) assume that if $\dlt \in D$ then $(\dlt,\dlt+\lambda) \cap D = \0$. We also assume, for convenience, that $0 \in D$.

Define $f: \lambda^+ \to D$ so that $f(\g) = \text{the } \g^{\mathrm{th}} \text{ member of }D$.
Fix a function $g \in M_{\lambda+\eta+\k}$ such that $g: \lambda^+ \times \lambda \to \lambda$, and for each $\g < \lambda^+$, the function $g(\g,\cdot)$ is a bijection $\lambda \to [f(\g),f(\g+1))$.

Note that $C_{f(\b)}$ has order type $\cf(\b) \geq \eta$, and therefore any $\cf(\b)$-sized subset of $C_{f(\b)}$ is unbounded in $C_{f(\b)}$.

Let $\seq{\nu_\xi}{\xi < \chi}$ be a strictly increasing sequence of regular cardinals such that $\sup_{\xi < \chi} \nu_\xi = \lambda$. 
Because $C_{f(\b)} \sub f(\b) = g[\b \times \lambda] = \bigcup_{\xi < \chi}g[\b \times \nu_\xi]$ and $\chi < \cf(\b)$, there is some $\rho < \chi$ and some $B \sub C_{f(\b)}$ such that $B \sub g[\b \times \nu_\rho]$ and $B$ is unbounded in $C_{f(\b)}$. 
By thinning out $B$ if necessary, we may (and do) assume that for each $\g < \b$ there is at most one $\z < \nu_\rho$ for which $g(\g,\z) \in B$.
Let
$$A = \set{\g < \b}{g(\g,\z) \in B \text{ for some } \z < \nu_\rho}.$$
Because $B$ is unbounded in $C_{f(\b)}$, $A$ is unbounded in $\b$.

Choose a function $c: \lambda^+ \times \lambda \to [\lambda]^{\leq \lambda}$ in $M_{\lambda+\eta+\k}$ such that for all $\dlt < \lambda^+$,
$$\set{C_\xi}{\xi < \dlt} = \set{c(\dlt,\g)}{\g < \lambda}.$$ 

Temporarily fix $J \in [B]^{<\k}$. 
By assumption, there is some $\z < f(\b)$ with $J \sub C_\z \in [C_{f(\b)}]^{<\k}$. 
Let $\dlt_J$ denote the least ordinal in $D$ satisfying $\dlt_J > \z$.
For each $\dlt \in B$ with $\dlt \geq \dlt_J$, there is, by our choice of $\seq{C_\xi}{\xi < \lambda^+}$ and $c$, some $\g < \lambda$ such that $J \sub c(\dlt,\g)$ and $|c(\dlt,\g)| < \k$. 
Furthermore, because $\mathrm{ordertype}(B) = \cf(\b) \geq \eta > \chi$, there is some unbounded set of $\dlt$'s that share the same bound on $\g$: more formally, there is some $\xi_J < \chi$ such that for unboundedly many $\dlt \in B$, there is some $\g < \nu_{\xi_J}$ such that $J \sub c(\dlt,\g)$ and $|c(\dlt,\g)| < \k$. 

Now unfix $J \in [B]^{<\k}$. The previous paragraph shows there is a function $a: [B]^{<\k} \to \chi$ such that for any $J \in [B]^{<\k}$, there are unboundedly many $\dlt \in B$ with the property that some $\g < \nu_{a(J)}$ satisfies $J \sub c(\dlt,\g)$ and $|c(\dlt,\g)| < \k$.

The set $[B]^{<\k}$ is $\k$-directed, and therefore any partition of $[B]^{<\k}$ into fewer than $\k$ pieces must contain a single piece that is cofinal in $[B]^{<\k}$. 
(Here, to say $\mathcal D \sub [B]^{<\k}$ is ``cofinal'' means that for each $J \in [B]^{<\k}$ there is some $X \in \mathcal D$ such that $J \sub X$.)
In particular, the fibers of $a$ form a partition of $[B]^{<\k}$, and so one of these fibers must be cofinal in $[B]^{<\k}$. Thus there is some $\t < \chi$ such that for each $J \in [B]^{<\k}$ there are unboundedly many $\dlt \in B$ with the property that some $\g < \nu_\t$ satisfies $J \sub c(\dlt,\g)$ and $|c(\dlt,\g)| < \k$. Because increasing $\t$ does not change this property, we may (and do) assume that $\t \geq \rho$ and $\nu_\t > \eta$.

For each $\xi \in [\t,\chi)$,  let
$$N_\xi = \textstyle \bigcup \set{M_{\lambda \cdot \dlt + \g}}{(\dlt,\g) \in A \times \nu_\xi}$$
and let $\mathcal N = \set{N_\xi}{\xi \in [\t,\chi)}$. It is clear that $\card{\mathcal N} = \chi < \eta$, so $(ii)$ holds. 

To see that $(iii)$ holds, first note that by Lemma~\ref{lem:b}, if $\lambda \cdot \dlt < \a$ then
$$\textstyle \bigcup \set{M_\g}{\g < \lambda \cdot (\dlt+1)} = \bigcup \set{M_{\lambda \cdot \dlt + \g}}{\g < \lambda}.$$
Now $\lambda \cdot \dlt < \a$ if and only if $\dlt < \b$. This, together with the fact that $A$ is an unbounded subset of $\b$, implies
$$\textstyle \bigcup_{\dlt \in A} \set{M_{\lambda \cdot \dlt + \g}}{\g < \lambda} = \bigcup_{\g < \a} M_\g.$$
Because $\sup_{\xi < \chi}\nu_\xi = \lambda$,
\begin{align*}
\textstyle \bigcup_{\dlt \in A} \set{M_{\lambda \cdot \dlt + \g}}{\g < \lambda} &= \textstyle \bigcup \set{M_{\lambda \cdot \dlt + \g}}{\dlt \in A \text{ and } \g < \lambda} \\
&= \textstyle \bigcup_{\xi \in [\t,\chi)} \set{M_{\lambda \cdot \dlt + \g}}{(\dlt,\g) \in A \times \nu_\xi} = \bigcup \mathcal N,
\end{align*}
and thus $(iii)$ holds.

It remains to prove that $(i)$ holds. For this, it suffices to show that $\set{M_{\lambda \cdot \dlt + \g}}{(\dlt,\g) \in A \times \nu_\xi}$ is $\k$-directed for each $\xi \in [\t,\chi)$.

Fix $\xi \in [\t,\chi)$.
It suffices to show that for every $I \in [A \times \nu_\xi]^{<\k}$, there is some $(\dlt,\g) \in A \times \nu_\xi$ such that $I \sub (\dlt \times \nu_\xi) \cap M_{\lambda \cdot \dlt + \g}$, because then for every $(\dlt',\g') \in I$ we have $\lambda \cdot \dlt' + \g' < \lambda \cdot \dlt + \g$ and $\lambda \cdot \dlt'+\g' \in M_{\lambda \cdot \dlt + \g}$, and this implies $M_{\lambda \cdot \dlt' + \g'} \sub M_{\lambda \cdot \dlt + \g}$.

Fix $I \in [A \times \nu_\xi]^{<\k}$, and fix some $H \in [A]^{<\k}$ and $K \in [\nu_\xi]^{<\k}$ such that $I \sub H \times K$.
Recall that $\k \in M_\lambda$ by Lemma~\ref{lem:kisdefinable}, and choose a function $\psi: \lambda \to [\lambda]^{<\k}$ in $M_\lambda$ such that for every regular cardinal $\e \in [\k,\lambda)$, $\psi$ maps $\e$ onto a cofinal subset of $[\e]^{<\k}$. (\gch implies such a function exists, and then elementarity implies there is such a function in $M_\lambda$.)
Fix some $\z < \nu_\xi$ such that $K \sub \psi(\z)$. 

For every $i \in [\lambda,\lambda^+)$, $M_i$ contains $\lambda$ (because $\lambda = |i|$ is definable from $i \in M_i$); this implies $M_\lambda \sub M_i$ and therefore $\psi \in M_i$. Thus if $\z \in M_i$, then $\psi(\z) \in M_i$ and (because $M_i$ is weakly $<\!\k^+$-closed) $\psi(\z) \sub M_i$. Now suppose $(\dlt,\g) \in A \times \nu_\xi$ such that $H \sub \dlt$ and $H \cup \{\z\} \sub M_{\lambda \cdot \dlt + \g}$. Then
$$I \sub H \times K \sub H \times \psi(\z) \sub M_{\lambda \cdot \dlt + \g},$$
and this implies $I \sub (\dlt \times \nu_\xi) \cap M_{\lambda \cdot \dlt + \g}$.
Hence, to complete the proof that $(i)$ holds for $\mathcal N$,
it suffices to find $(\dlt,\g) \in A \times \nu_\xi$ such that $H \sub \dlt$ and $H \cup \{\z\} \sub M_{\lambda \cdot \dlt + \g}$.

Let $J = B \cap g[H \times \nu_\rho]$. Because of how we thinned out $B$, for each $x \in H$ there is exactly one $y \in \nu_\rho$ such that $g(x,y) \in J$. Therefore $|J| < \k$.
By our choice of $\nu_\t$, there are unboundedly many $\e \in B$ with the property that some $\pi<\nu_\t$ satisfies $J \sub c(\e,\pi)$ and $|c(\e,\pi)| < \k$.
In particular, we may pick some such $\e$ (and $\pi$) with $\e$ large enough so that if $(\dlt,\varsigma) = g^{-1}(\e)$, then $H \sub \dlt$ and $\dlt \geq 2$.
So that $\e$ is defined precisely, let us take $\e$ to be the least such ordinal.
Note that $(\dlt,\varsigma) = g^{-1}(\e)$ implies $\dlt \in A$ and $\varsigma < \nu_\rho$.

Thus we have obtained our $\dlt \in A$ with $H \sub \dlt$; it remains to find $\g < \nu_\xi$ such that $H \cup \{\z\} \sub M_{\lambda \cdot \dlt + \g}$. 
Let
$$G = \set{x < \lambda^+}{\text{for some } y < \lambda,\, g(x,y) \in c(\e,\pi) }.$$
By our choice of $\e$ and $\pi$, $H \sub G$. Because $g$ is injective, $|G| \leq |c(\e,\pi)| < \k$. Because each $M_i$ is weakly $<\!\k$-closed, this means that if $G,\z \in M_i$ then $G \cup \{\z\} \sub M_i$.

The set $G$ is defined above from the parameters $\lambda$, $c$, $\e$, $\pi$, and $g$. 
Also $\e = g(\dlt,\varsigma)$, and thus $\e$ is definable from parameters $g$, $\dlt$, and $\varsigma$. 

Now, $\g < \lambda$ and $\dlt > 0$ implies $\lambda,\dlt \in M_{\lambda \cdot \dlt + \g}$, because $\lambda \cdot \dlt + \g \in M_{\lambda \cdot \dlt + \g}$ and both $\lambda$ and $\dlt$ are definable from $\lambda \cdot \dlt + \g$ (e.g., via its cardinal normal form).
If $\eta,\k \in M_{\lambda \cdot \dlt + \g}$ then $\lambda + \eta + \k \in M_{\lambda \cdot \dlt + \g}$ and (because $\dlt \geq 2$) $\lambda + \eta + \k < \lambda \cdot \dlt + \g$, which implies $c,g \in M_{\lambda+\eta+\k} \sub M_{\lambda \cdot \dlt + \g}$.

Thus, to find some $\g < \nu_\xi$ such that $H \cup \{\z\} \sub M_{\lambda \cdot \dlt + \g}$ and finish the proof, it suffices to find $\g < \nu_\xi$ such that $P = \{ \eta,\k,\pi,\varsigma,\z \} \sub M_{\lambda \cdot \dlt + \g}$.
For each $j < \nu_\xi$, define $N_j = M_{\lambda \cdot \dlt + j}$. By Lemma~\ref{lem:a}, $\seq{N_j}{j < \nu_\xi}$ is a long $\k^+$-approximation sequence. Because $\nu_\xi$ is a cardinal (which implies $\daleth(\nu_\xi) = 1$), Lemma~\ref{lem:c} implies $\set{N_j}{j < \nu_\xi}$ is $\aleph_0$-directed. 
But notice that each $j \in P$ is $<\! \nu_\xi$. For each $j \in P$, we have $\lambda \cdot \dlt + j \in N_j$ and therefore $j \in N_j$. Because $\set{N_j}{j < \nu_\xi}$ is $\aleph_0$-directed, there is some $\g < \nu_\xi$ such that $j \in N_\g$ for every $j \in P$. This $\g$ is as required.
\end{proof}

Let $\vws$ abbreviate the statement that $\vws(\lambda,\eta,\k)$ holds for all infinite cardinals $\lambda \geq \eta \geq \k$ with $\eta,\k$ regular. 

\begin{theorem}\label{thm:trees}
Assume $\gch+\vws$, and let $\k,\mu$ be infinite regular cardinals with $\k < \mu$. Then for any set $p$, there is a $\k$-sage Davies tree for $\mu$ over $p$.
\end{theorem}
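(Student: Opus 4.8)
The plan is to show that the construction in Lemma~\ref{lem:facts}$(5)$ already produces a $\k$-sage Davies tree, so that the proof is merely a matter of assembling the lemmas of this section. Using $\gch$ (and the regularity of $\k$), fix by Lemma~\ref{lem:facts}$(5)$ a closed long $\k^+$-approximation sequence $\seq{M_\a}{\a<\mu}$ over $p$ of length $\mu$. As observed just after Definition~\ref{def:approx}, such a sequence automatically satisfies every clause of the definition of a $\k$-sage Davies tree for $\mu$ over $p$ except perhaps clauses $(2)$ and $(3)$ of Definition~\ref{def:tree} and clause $(5)$ of Definition~\ref{def:sage}. Lemma~\ref{lem:sage} supplies exactly these clauses, provided that $\cf(\mu)\geq\k$, that the sequence has the $\k$-Davies property, and that $\set{M_\a}{\a<\mu}$ is $\k$-directed. (Lemma~\ref{lem:sage} is phrased in terms of $\k$-high Davies trees, but its proof verifies the three clauses $(2)$, $(3)$, $(5)$ from both Definitions~\ref{def:tree} and \ref{def:sage}, so together with the preceding observation it in fact delivers the sage version.) Since $\mu$ is regular and $\mu>\k$, we have $\cf(\mu)=\mu\geq\k^+>\k$, so the first of the three required conditions is automatic, and it remains to establish the other two.

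For $\k$-directedness I would appeal to Lemma~\ref{lem:d}: under $\gch$, $\set{M_\a}{\a<\mu}$ is $\min\{\cf(\mu),\k^+\}$-directed, and as $\cf(\mu)=\mu\geq\k^+$ this means it is $\k^+$-directed, hence in particular $\k$-directed. For the $\k$-Davies property, which by definition is the $(\k,\k)$-Davies property, I would apply Lemma~\ref{lem:square} with $\eta:=\k$: that lemma states that, under $\gch$, if $\vws(\lambda,\k,\k)$ holds for every singular $\lambda\in[\k,\mu)$ with $\cf(\lambda)<\k$, then every closed long $\k^+$-approximation sequence of length $\mu$ has the $(\k,\k)$-Davies property. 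Every instance $\vws(\lambda,\k,\k)$ required here lies among those granted by the blanket hypothesis $\vws$ --- namely the instances whose middle parameter equals $\k$, a regular cardinal --- so the hypothesis of Lemma~\ref{lem:square} is met and $\seq{M_\a}{\a<\mu}$ has the $\k$-Davies property.

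Combining these two facts with Lemma~\ref{lem:sage} shows that $\seq{M_\a}{\a<\mu}$ is a $\k$-sage Davies tree for $\mu$ over $p$, which is the conclusion of the theorem. I do not anticipate a genuine obstacle: all the substantial content has already been absorbed into Lemmas~\ref{lem:d} and \ref{lem:square}, and this argument is essentially bookkeeping --- mainly noting that regularity of $\mu$ together with $\mu>\k$ forces $\cf(\mu)\geq\k^+$, and that the $\eta=\k$ instances of Very Weak Square demanded by Lemma~\ref{lem:square} are exactly those supplied by $\vws$. The one point requiring care is the remark that Lemma~\ref{lem:sage}, notwithstanding its stated conclusion, certifies the full $\k$-sage property rather than merely the $\k$-high property.
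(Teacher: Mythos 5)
Your proposal is correct and follows exactly the paper's route: the paper's proof of this theorem is a one-line citation of Lemmas~\ref{lem:facts}$(5)$, \ref{lem:sage}, \ref{lem:d}, and \ref{lem:square}, and your write-up simply makes the bookkeeping explicit (including the correct observation that the proof of Lemma~\ref{lem:sage} verifies clause $(5)$ and hence yields the sage version despite its stated conclusion).
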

\begin{proof}
This follows from Lemmas \ref{lem:facts}$(5)$, \ref{lem:sage}, \ref{lem:d}, and \ref{lem:square}.
\end{proof}

In fact, our proof only every uses that \gch holds below $\mu$, so this slightly weaker hypothesis suffices for Theorem~\ref{thm:trees}. If there are no singular cardinals below $\mu$, then the $\vws$ hypothesis becomes superfluous:

\begin{corollary}\label{cor:smalltrees}
Let $\k,\mu$ be infinite regular cardinals with $\k < \mu < \aleph_\w$ and suppose \gch holds below $\mu$. Then for any set $p$, there is a $\k$-sage Davies tree for $\mu$ over $p$.
\end{corollary}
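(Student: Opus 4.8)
The plan is to exploit the fact that $\mu < \aleph_\w$ forces every infinite cardinal $\lambda < \mu$ to be of the form $\aleph_n$ for some $n \in \w$, and hence to be regular. In particular there are no singular cardinals below $\mu$, so the Very Weak Square hypothesis that enters Theorem~\ref{thm:trees} through Lemma~\ref{lem:square} is vacuously satisfied; one can therefore re-run the proof of Theorem~\ref{thm:trees} with \gch in place of $\gch+\vws$. Moreover, as observed in the remark immediately following Theorem~\ref{thm:trees}, that proof only ever appeals to instances of \gch at cardinals below $\mu$, so the weaker hypothesis that \gch holds below $\mu$ suffices.

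Concretely, I would fix $p$ and take $\eta = \k$. First, apply Lemma~\ref{lem:facts}$(5)$ --- whose proof uses \gch only at $\k$, and $\k < \mu$ --- to obtain a closed long $\k^+$-approximation sequence $\seq{M_\a}{\a < \mu}$ over $p$ of length $\mu$. Next, apply Lemma~\ref{lem:square} with $\eta = \k$: its hypothesis asks for $\vws(\lambda,\k,\k)$ only for singular $\lambda \in [\k,\mu)$ with $\cf(\lambda) < \k$, and since no cardinal below $\mu$ is singular there is nothing to check, so $\seq{M_\a}{\a < \mu}$ has the $(\k,\k)$-Davies property, i.e.\ the $\k$-Davies property. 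Then Lemma~\ref{lem:d} (again using \gch only below $\mu$) gives that $\set{M_\a}{\a < \mu}$ is $\min\{\cf(\mu),\k^+\}$-directed; as $\mu$ is regular and $\mu > \k$ this is $\k^+$-directed, in particular $\k$-directed. Finally, since $\cf(\mu) = \mu \geq \k$, Lemma~\ref{lem:sage} applies and yields, exactly as in the proof of Theorem~\ref{thm:trees}, that $\seq{M_\a}{\a < \mu}$ is a $\k$-sage Davies tree for $\mu$ over $p$. This is the same chain of lemmas used for Theorem~\ref{thm:trees}, with the $\vws$ input simply deleted.

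The only step requiring thought rather than citation is the verification that ``no singular cardinals below $\mu$'' genuinely makes the $\vws$ hypothesis of Lemma~\ref{lem:square} vacuous: this amounts to observing that the sole case in the proof of Lemma~\ref{lem:square} that invokes Very Weak Square (Case~3d) begins by assuming that $|\a|$ is singular with $\cf(|\a|) < \k$ and $|\a| < \mu$, a configuration which cannot occur under our hypothesis. Everything else is bookkeeping: one should confirm that replacing ``\gch'' by ``\gch below $\mu$'' breaks no step in Lemmas~\ref{lem:facts}$(5)$, \ref{lem:sage}, \ref{lem:d}, and \ref{lem:square}, but this is precisely the strengthening already recorded in the remark after Theorem~\ref{thm:trees}, and in each case the relevant instance of \gch reduces to an elementary cardinal-arithmetic identity ($\k^{<\k}=\k$, or $\mu^{<\mu}=\mu$ for regular $\mu$, or $\nu^{<\lambda}=\nu$ for the cardinals $\lambda \leq \nu < \mu$ arising in Lemma~\ref{lem:d}) that follows from \gch holding below $\mu$ together with the regularity of the cardinals in play. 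I expect this observation about Case~3d to be the ``main obstacle'' only in the sense of being the single genuinely new point; the rest is a direct appeal to the preceding lemmas.
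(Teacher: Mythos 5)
Your proposal is correct and is exactly the argument the paper intends: the corollary is stated without proof immediately after the remark that \gch is only ever used below $\mu$ and that the $\vws$ hypothesis becomes superfluous when there are no singular cardinals below $\mu$, which is precisely your observation that Case~3d of Lemma~\ref{lem:square} (the only place $\vws$ enters) cannot arise when every cardinal below $\mu$ is some regular $\aleph_n$. Your chaining of Lemmas~\ref{lem:facts}$(5)$, \ref{lem:square} (with $\eta=\k$), \ref{lem:d}, and \ref{lem:sage} is the same route as the proof of Theorem~\ref{thm:trees}.
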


\begin{theorem}\label{thm:main}
$\gch+\vws$ implies \axiom. Consequently, $\gch+\vws$ implies that for any quasi-regular space $X$, if \emph{\small NONEMPTY} has a winning strategy in $\mathrm{BM}(X)$, then then \emph{\small NONEMPTY} has a winning $2$-tactic.
\end{theorem}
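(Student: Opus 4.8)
The plan is to derive Theorem~\ref{thm:main} by chaining together results already in place; the heavy lifting was done in Theorem~\ref{thm:trees} and its supporting lemmas, so what remains is bookkeeping. First I would check that $\gch+\vws$ supplies the hypothesis of Corollary~\ref{cor:trees}, which asks that for every set $p$ and every regular cardinal $\k$ there be arbitrarily high values of $\mu$ admitting a $\k$-high Davies tree for $\mu$ over $p$. Given such $p$ and $\k$ and any target cardinal $\lambda$, I would pick a regular cardinal $\mu > \max\{\lambda,\k\}$ and apply Theorem~\ref{thm:trees} to obtain a $\k$-sage Davies tree for $\mu$ over $p$; by Definition~\ref{def:sage} this is in particular a $\k$-high Davies tree. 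Since $\lambda$ was arbitrary, such trees exist at arbitrarily high $\mu$, so Corollary~\ref{cor:trees} applies and yields \axiom.

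For the ``consequently'' clause I would then invoke Corollary~\ref{cor:teeitup}. Its hypothesis, \axiom, has just been established, and its conclusion is exactly that on a quasi-regular space the existence of a winning strategy for \emph{\small NONEMPTY} in $\mathrm{BM}(X)$ is equivalent to the existence of a winning $2$-tactic. So if \emph{\small NONEMPTY} has a winning strategy in $\mathrm{BM}(X)$ for a quasi-regular $X$, she has a winning $2$-tactic, which is the assertion of the theorem. Since every $T_3$ space is quasi-regular and $\square$ implies $\vws$, the Main Theorem stated in the introduction drops out as a special case.

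I do not expect any real obstacle at this step: all the difficulty lives upstream. The genuinely delicate point is Lemma~\ref{lem:square}, and within it Case 3d, which is the sole place where the $\vws$ hypothesis is consumed; the other essential ingredient is Lemma~\ref{lem:kcc}, the unique place in the proof of Theorem~\ref{thm:kcc} where the $\k$-cc and separativity are traded for the covering property that forces the dense set $\DD$ to have Noetherian type $\leq\k$. The only points to double-check in the present argument are that the Erd\H{o}s--Tarski regularity of $S(\PP)$ (invoked inside Corollary~\ref{cor:trees}) lets us use $\k$-high Davies trees only for regular $\k$, and that the ``arbitrarily high $\mu$'' demanded by Corollary~\ref{cor:trees} is matched by the ``$\mu$ any regular cardinal larger than $\k$'' that Theorem~\ref{thm:trees} delivers.
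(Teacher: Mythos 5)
Your proposal is correct and follows exactly the paper's route: the paper proves Theorem~\ref{thm:main} by citing Theorem~\ref{thm:trees} together with Corollaries~\ref{cor:trees} and~\ref{cor:teeitup}, precisely the chain you describe. Your added checks (sage implies high, arbitrarily large regular $\mu$ exist, and Erd\H{o}s--Tarski regularity of $S(\PP)$ means only regular $\k$ are needed) are the right bookkeeping points and all go through.
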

\begin{proof}
This follows from Theorem~\ref{thm:trees} and Corollaries \ref{cor:trees} and \ref{cor:teeitup}.
\end{proof}

\begin{corollary}\label{cor:small}
Suppose $\PP$ is a separative poset with $|\PP| < \aleph_\w$, and that $\gch$ holds below $\card{\PP}$. Then \axiom$\!\!(\PP)$ holds.
\end{corollary}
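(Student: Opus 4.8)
The plan is to assemble this from Theorem~\ref{thm:kcc}, Corollary~\ref{cor:smalltrees}, and the elementary bounds recorded in Remarks~\ref{rem:countable=dumb} and~\ref{rem:D}. First I would set $\k = S(\PP)$; by the Erd\H{o}s--Tarski theorem (as used in Corollary~\ref{cor:trees}) $\k$ is a regular cardinal, and since $\set{p \in \PP}{q \leq p}$ has size at most $\card{\PP}$ we get $\pnt(\PP) \leq \card{\PP}^+$ for free (take $\DD = \PP$), so also $\k \leq \card{\PP}^+$. Moreover, because $\card{\PP} < \aleph_\w$ the cardinal $\card{\PP}$ is regular. If $\PP$ is finite, $\axiom(\PP)$ holds by Remark~\ref{rem:countable=dumb}, so I may assume $\PP$ is infinite; then $\k \geq \aleph_1$ by separativity (cf. Remark~\ref{rem:D}). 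The proof now splits according to how $\k$ compares with $\card{\PP}$.

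If $\k \geq \card{\PP}$, I would run the well-ordering argument of Remark~\ref{rem:D} verbatim, but at the cardinal $\card{\PP}$ in place of $\aleph_1$: enumerate $\PP = \set{p_\a}{\a < \card{\PP}}$, set $\DD = \set{p_\a}{\text{if } \xi < \a \text{ then } p_\xi \not\leq p_\a}$, and verify exactly as there that $\DD$ is dense and that $\set{q \in \DD}{p_\a \leq q} \sub \set{p_\xi}{\xi \leq \a}$. Since $\card{\PP}$ is regular, $\set{p_\xi}{\xi \leq \a}$ has size $< \card{\PP}$, so $\nt(\DD) \leq \card{\PP}$, hence $\pnt(\PP) \leq \card{\PP} \leq \k = S(\PP)$, which is $\axiom(\PP)$. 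If instead $\k < \card{\PP}$, put $\mu = \card{\PP}$; then $\k$ and $\mu$ are infinite regular cardinals with $\k < \mu < \aleph_\w$, and \gch holds below $\mu$ by hypothesis, so Corollary~\ref{cor:smalltrees} provides a $\k$-sage --- in particular, a $\k$-high --- Davies tree for $\mu$ over $\PP$. As $S(\PP) = \k$ and $\mu \geq \card{\PP}$, Theorem~\ref{thm:kcc} yields $\axiom(\PP)$, completing the proof.

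There is no genuine obstacle here beyond the bookkeeping, but the one point that needs care is why the case $\k \geq \card{\PP}$ --- that is, $\k \in \{\card{\PP}, \card{\PP}^+\}$ --- cannot simply be folded into an application of Corollary~\ref{cor:smalltrees}: that corollary requires $\k < \mu$ strictly, so one would be forced to take $\mu = \card{\PP}^+$, but then Corollary~\ref{cor:smalltrees} would demand \gch below $\card{\PP}^+$, i.e.\ also $2^{\card{\PP}} = \card{\PP}^+$, which is not granted by the hypothesis ``\gch holds below $\card{\PP}$''. Handling $\k \geq \card{\PP}$ by the purely combinatorial well-ordering argument is exactly what lets us sidestep this, and it is why the statement needs only $\gch$ below $\card{\PP}$ rather than up to and including it.
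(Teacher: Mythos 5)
Your proof is correct and follows exactly the route the paper intends (the corollary is stated without proof as an immediate consequence of Theorem~\ref{thm:kcc} and Corollary~\ref{cor:smalltrees}), and your explicit handling of the boundary case $S(\PP)\ge\card{\PP}$ by running the Remark~\ref{rem:D} argument at $\card{\PP}$ is precisely the right way to cover what the bare citation chain misses, since Corollary~\ref{cor:smalltrees} requires $\kappa<\mu$ strictly and taking $\mu=\card{\PP}^+$ would demand $2^{\card{\PP}}=\card{\PP}^+$. One cosmetic slip: $S(\PP)\le\card{\PP}^+$ does not follow from $\pnt(\PP)\le\card{\PP}^+$ but from the trivial fact that antichains are subsets of $\PP$ --- though this bound is never actually used in your argument, so nothing is affected.
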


By modifying the arguments in Section~\ref{sec:BM}, this corollary implies that if \gch holds up to some $\aleph_n$, then any quasi-regular space witnessing Telg\'arsky's conjecture must have $\pi$-weight at least $\aleph_n$.

\section{The independence of \axiom}\label{sec:independence}

In this section we show that \axiom is independent of \zfc.

Recall that for $f,g \in \w^\w$, $f \leq^* g$ means that $f(n) \leq g(n)$ for all but finitely many values of $n$.
A subset $A$ of $\w^\w$ is \emph{unbounded} if there is no $g \in \w^\w$ such that $f \leq^* g$ for all $f \in A$, and
$A$ is a \emph{dominating family} if for all $f \in \w^\w$, there is some $g \in A$ such that $f \leq^* g$.
The smallest size of an unbounded subset of $\w^\w$ is denoted by $\bdd$, and the smallest size of a dominating family is denoted by $\dom$.

\begin{theorem}\label{thm:Hechler}
Let $\HH$ denote the Hechler forcing. Then $\HH$ is ccc, and if $\bdd > \aleph_1$ then $\pnt(\HH) \geq \bdd$. 
Consequently, $\axiom(\mathrm{ccc}+\mathrm{separative})$ implies $\bdd = \aleph_1$.
\end{theorem}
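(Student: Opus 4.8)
The plan is to work directly with the concrete poset $\HH$ and reduce the computation of $\pnt(\HH)$ to a question about cardinal characteristics. Recall that a condition of $\HH$ is a pair $(s,f)$ with $s\in\w^{<\w}$, $f\in\w^\w$ and $f\supseteq s$, and that $(t,g)\le(s,f)$ means $s\subseteq t$, $t(n)\ge f(n)$ for $|s|\le n<|t|$, and $g\ge f$ everywhere. First I would record the two easy structural facts: any two conditions sharing a stem $s$ have the common refinement $(s,\max(f,g))$, so $\HH$ is $\s$-centered and hence ccc; and $\HH$ is separative, since whenever $(t,g)\not\le(s,f)$ one is either in a case where the two conditions are already incompatible, or else $s\subseteq t$, $t(n)\ge f(n)$ for $|s|\le n<|t|$, and $g(n)<f(n)$ for some (necessarily) $n\ge|t|$ — in which case $(g\rest(n{+}1),\,g)\le(t,g)$ is incompatible with $(s,f)$. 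So $S(\HH)=\aleph_1$, and $\axiom(\HH)$ would assert $\pnt(\HH)\le\aleph_1$; the content is to show that this fails when $\bdd>\aleph_1$.

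Next I would fix a dense $\DD\subseteq\HH$ and, for each stem $s$, look at its section $\DD_s=\set{f}{(s,f)\in\DD}$. A routine computation identifies, for $q=(t,h)$ — and, replacing $h$ by $t{}^\frown(h\rest[|t|,\w))$, one may assume $h\supseteq t$ — the upper cone $\set{p\in\DD}{q\le p}$ with the disjoint union, over the finitely many $s\subseteq t$, of $\set{f\in\DD_s}{f(n)\le h(n)\text{ for all }n}$. Since a finite union of sets each of size $<\bdd$ has size $<\bdd$, this means $\nt(\DD)\ge\bdd$ is equivalent to: for each cardinal $\lambda<\bdd$ there are a stem $s$ and an $h\supseteq s$ with $\card{\set{f\in\DD_s}{f\le h\text{ everywhere}}}\ge\lambda$. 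Producing such witnesses for every dense $\DD$ will give $\pnt(\HH)\ge\bdd$, and hence the failure of $\axiom(\HH)$.

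The first half of producing them is to use density to make some section $\le^*$-unbounded. For any stem $s'$ and any $g\in\w^\w$, applying density to the condition $(s',\,s'{}^\frown(g\rest[|s'|,\w)))$ yields $(s,f)\in\DD$ with $s\supseteq s'$ and $f(n)\ge g(n)$ for all $n\ge|s'|$; so $\bigcup_{s\supseteq s'}\DD_s$ dominates $\w^\w$ pointwise off a finite set, hence is $\le^*$-dominating and therefore $\le^*$-unbounded. As $\bdd>\aleph_0$, a countable union of $\le^*$-bounded families is $\le^*$-bounded, and there are only countably many $s\supseteq s'$, so some $\DD_{s^*}$ (taking $s'=\0$) is $\le^*$-unbounded; in particular $\card{\DD_{s^*}}\ge\bdd$. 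The second, harder half is the purely combinatorial claim that, when $\bdd>\aleph_1$, a $\le^*$-unbounded family $E\sub\w^\w$ has, for each $\lambda<\bdd$, a subfamily of size $\ge\lambda$ that is bounded everywhere by one function. I would prove this by choosing a cardinal $\k$ with $\lambda\le\k<\bdd$ and $\cf(\k)>\aleph_0$ (possible unless $\lambda$ is singular of cofinality $\w$ with $\lambda^+=\bdd$, a corner that needs a small separate argument) and picking $E'\sub E$ of size $\k$; then $E'$ is $\le^*$-bounded, say by $h_0$, so $E'$ is the union of the countably many sets $\set{f\in E'}{f\rest m=\sigma,\ f(n)\le h_0(n)\ \forall n\ge m}$ (as $m$ ranges over $\w$ and $\sigma$ over length-$m$ extensions of the common stem of $E$), each bounded everywhere, and since $\cf(\k)>\aleph_0$ one piece has size $\k\ge\lambda$. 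Applying this to $\DD_{s^*}$ and feeding the resulting everywhere-bound $h$ back through the upper-cone computation yields $\nt(\DD)\ge\bdd$; as $\DD$ was arbitrary, $\pnt(\HH)\ge\bdd$. (Note that the case $\lambda=\aleph_1$ already gives $\pnt(\HH)\ge\aleph_2$, which is all the corollary below needs.)

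For the final clause: if $\axiom(\HH)$ held we would get $\pnt(\HH)\le S(\HH)=\aleph_1$, contradicting $\pnt(\HH)\ge\bdd>\aleph_1$; so $\axiom(\HH)$ fails, and since $\HH$ is a ccc separative poset this refutes $\axiom(\mathrm{ccc}+\mathrm{separative})$ — contrapositively, $\axiom(\mathrm{ccc}+\mathrm{separative})$ forces $\bdd=\aleph_1$. I expect the one genuine obstacle to be the clash of directions between the two halves: density only ever hands back conditions whose function parts \emph{dominate} prescribed functions, whereas the $\nt$-count concerns function parts \emph{dominated} everywhere by a fixed $h$; the two bridging moves — first locating a $\le^*$-unbounded section, then extracting from it large everywhere-bounded subfamilies by a cofinality count — are where the work lies, and it is exactly there that $\bdd>\aleph_1$ is used.
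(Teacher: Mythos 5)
Your proposal is correct and follows essentially the same route as the paper: isolate a stem $s$ whose section $\DD_s$ is large (the paper arranges for $\DD_s$ to be dominating by diagonalizing against the failures; you arrange for it to be $\leq^*$-unbounded by a countable-union argument), use $\k<\bdd$ to $\leq^*$-bound a $\k$-sized subfamily, and then pigeonhole over the countably many stems to trap $\k$-many elements of $\DD$ in the cone above a single condition. The cofinality corner you flag (when $\bdd$ is the successor of a singular cardinal of countable cofinality) is equally present in the paper's pigeonhole step and is harmless for the final clause, which only needs the case $\k=\aleph_1$.
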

\begin{proof}
Recall that the Hechler forcing is 
$$\HH = \set{(s,f)}{s \in \w^{<\w} \text{ and } f \in \w^\w}$$
with the extension relation defined by having $(t,g) \leq (s,f)$ if and only if
\begin{itemize}
\item[$\circ$] $t$ extends $s$,
\item[$\circ$] $g(n) \geq f(n)$ for all $n \geq \domain(t)$, and 
\item[$\circ$] $t(n) \geq f(n)$ for all $n \in \domain(t) \setminus \domain(s)$.
\end{itemize}
It is well-known (and not difficult to see) that $\HH$ has the ccc. 
So to prove the theorem, we must show $\pnt(\HH) \geq \bdd$ whenever $\bdd > \aleph_1$.

Let $\DD$ be a dense sub-poset of $\HH$. 

We claim there is some $s \in \w^{<\w}$ such that $\DD_{s} = \set{f \in \w^\w}{(s,f) \in \DD}$ is a dominating family. Aiming for a contradiction, suppose $\DD_{s}$ is not a dominating family for any $s \in \w^{<\w}$. Then for every $s \in \w^{<\w}$, there is some $f_s \in \w^\w$ such that $f_s \not\leq^* g$ for all $g \in \DD_{s}$. Let $f \in \w^\w$ be any function such that $f_s \leq^* f$ for all $s \in \w^{<\w}$. (Such a function must exist because $\w^{<\w}$ is countable.) For all $s \in \w^{<\w}$, we have $f \not\leq^* g$ for all $g \in \DD_s$. Thus no member of $\DD$ extends $(\0,f)$. This is a contradiction, because $\DD$ is dense in $\HH$.

Fix $s \in \w^{<\w}$ such that $\DD_{s} = \set{f \in \w^\w}{(s,f) \in \DD}$ is a dominating family, and (using the hypothesis $\bdd > \aleph_1$) fix some uncountable $\k < \bdd$, and fix $X \sub \DD_s$ with $\card{X} = \k$. (This is possible because $\card{\DD_s} \geq \dom \geq \bdd > \k$.) Because $\card{X} < \bdd$, there is some $g \in \w^\w$ such that $f \leq^* g$ for every $f \in X$; because $\DD_s$ is a dominating family, there is some $h \in \DD_s$ with $g \leq^* h$. Thus $f \leq^* h \in \DD_s$ for every $f \in X$.

Given $t \in \w^{<\w}$, let $s \cat t$ denote, as usual, the member of $\w^{<\w}$ such that $s \cat t \rest \domain(s) = s$, and $s \cat t(\domain(s)+i) = t(i)$ for all $i \in \domain(t)$. Note that $h \geq^* f$ implies that $(s \cat t,h) \leq (s,f)$ for some $t \in \w^{<\w}$. Therefore, by the pigeonhole principle, there is some $t \in \w^{<\w}$ such that $(s \cat t,h) \leq (s,f)$ for $\k$-many $f \in X$. Because $\DD$ is dense in $\HH$, there is some $p \in \DD$ with $p \leq (s \cat t,h)$, and so $p \leq (s,f)$ for $\k$-many $f \in X$. Thus $\pnt(\DD) > \k$. As $\DD$ was an arbitrary dense sub-poset of $\HH$, and as $\k$ was any cardinal $<\!\bdd$, this shows $\pnt(\HH) \geq \bdd$.
\end{proof}

Let us point out that there are two natural ways to strengthen the statement of Theorem~\ref{thm:Hechler}, and both of them are false:

\begin{observation}
It is not necessarily true that $\pnt(\HH) = \bdd$ when $\bdd > \aleph_1$. In other words, the inequality of Theorem~\ref{thm:Hechler} can be strict. 
\end{observation}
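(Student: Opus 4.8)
The plan is to pin down $\pnt(\HH)$ well enough to see that it can exceed $\bdd$. Call a family $\mathcal G \subseteq \w^\w$ a \emph{$\bdd$-thin dominating family} if it is dominating and $\card{\set{g \in \mathcal G}{g \leq^* f}} < \bdd$ for every $f \in \w^\w$ (equivalently, every bounded subfamily of $\mathcal G$ has size $< \bdd$). I would first prove: \emph{if there is no $\bdd$-thin dominating family, then $\pnt(\HH) \geq \bdd^+$.} The argument refines the proof of Theorem~\ref{thm:Hechler}. Let $\DD$ be dense in $\HH$ and, exactly as there, fix a stem $s$ with $\DD_{s} = \set{f \in \w^\w}{(s,f) \in \DD}$ dominating. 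By hypothesis $\DD_{s}$ is not $\bdd$-thin, so there are $f \in \w^\w$ and $Y \subseteq \DD_{s}$ with $\card{Y} = \bdd$ and $g \leq^* f$ for all $g \in Y$. For each $g \in Y$ one has $(s\cat t, f) \leq (s, g)$ for a suitable $t \in \w^{<\w}$ (any $t$ long enough to clear the exceptional set of $g \leq^* f$ and dominating $g$ on the new coordinates); partitioning $Y$ according to the relevant initial segment of $g$ and using that $\bdd$ is regular and uncountable while $\w^{<\w}$ is countable, a single such $t$ works for $\bdd$-many $g \in Y$. By density pick $q \in \DD$ with $q \leq (s\cat t, f)$; then $q \leq (s,g)$ for $\bdd$-many $g \in Y \subseteq \DD_{s}$, so $\card{\set{p \in \DD}{q \leq p}} \geq \bdd$ and $\nt(\DD) > \bdd$. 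As $\DD$ was arbitrary, $\pnt(\HH) \geq \bdd^+$.

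For the converse direction — which tells us where to look for the model — I would note that a $\bdd$-thin dominating family $\mathcal G$ gives a dense $\DD = \set{(r, g)}{r \in \w^{<\w},\ g =^* g_0 \text{ for some } g_0 \in \mathcal G}$ with $\nt(\DD) \leq \bdd$: if $q = (u,f)$ and $(r,g) \in \DD$ with $q \leq (r,g)$, then $g$ is eventually below $f$, so the associated $g_0 \in \mathcal G$ satisfies $g_0 \leq^* f$, leaving only $\bdd \cdot \aleph_0 = \bdd$ possibilities in all. Hence $\pnt(\HH) = \bdd$ precisely when a $\bdd$-thin dominating family exists; in particular $\pnt(\HH) = \bdd$ in every model of $\bdd = \dom$ (a $\leq^*$-increasing dominating sequence of length $\bdd$ is $\bdd$-thin), e.g.\ the Hechler model. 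So the observation reduces to the consistency of ``$\bdd > \aleph_1$ and there is no $\bdd$-thin dominating family'', and any witnessing model must satisfy $\aleph_1 < \bdd < \dom$. I would therefore take a model of, say, $\bdd = \aleph_2 < \dom = \continuum = \aleph_3$ — the consistency of $\aleph_1 < \bdd < \dom$ being classical — and prove that in it every dominating family of size $\aleph_3$ contains a bounded subfamily of size $\aleph_2$, i.e.\ no $\aleph_2$-thin dominating family exists.

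The main obstacle is precisely this last step. Whether a $\bdd$-thin dominating family — essentially a $(\dom,\bdd)$-scale — exists is not settled by $\bdd < \dom$ alone: a direct attempt to build one appears to break down (the dominating ``coordinate'' cannot be made thin), which is encouraging, but a clean \zfc proof that it cannot exist once $\bdd < \dom$ seems unlikely, so one probably has to exploit the specific structure of the chosen model of $\aleph_1 < \bdd < \dom$. Concretely, one would want the iteration producing it to be arranged so that its generic dominating families are ``fat'' — for instance that for cofinally many stages $\alpha$ the reals added over the next $\bdd$ stages stay bounded in the final model — which forces every $\dom$-sized dominating family to contain a bounded subfamily of size $\bdd$, while keeping an unbounded family of size $\aleph_2$ alive so that $\bdd$ remains $\aleph_2$. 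Making this density/preservation argument precise, and choosing the iterands to achieve it, is where I expect the real work to lie.
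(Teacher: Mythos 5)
Your reduction is correct, and it is in fact a clean isolation of what the paper's argument is really about: the statement ``$\pnt(\HH) > \bdd$'' is equivalent (given $\bdd > \aleph_1$ and Theorem~\ref{thm:Hechler}) to the nonexistence of what you call a $\bdd$-thin dominating family, i.e.\ a dominating family every bounded subfamily of which has size $<\bdd$. Both halves of your equivalence check out: the pigeonhole over stems $t \in \w^{<\w}$ uses only the regularity and uncountability of $\bdd$, and your dense set built from a thin dominating family does have Noetherian type $\leq \bdd$. You also correctly observe that any witnessing model must have $\bdd < \dom$.

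The genuine gap is the step you yourself flag: you never produce a model of $\bdd > \aleph_1$ in which no $\bdd$-thin dominating family exists, and that construction is essentially the entire content of the observation. The missing tool is Hechler's theorem (cited in the paper as \cite{Hechler}; see also \cite{Burke}): for any $\sigma$-directed poset $P$ there is a ccc forcing extension in which $P$ embeds as a cofinal subset of $(\w^\w,\leq^*)$. Applying this to $P = \k \times \lambda$ with the product order, for uncountable regular $\k < \lambda$, yields a model with $\bdd = \k$ and $\dom = \lambda$ in which every dominating family, projected to $\k\times\lambda$ via a choice of dominating $f_{\b,\dlt}$, has cofinal (hence $\lambda$-sized) image; two applications of the pigeonhole principle (first fixing the first coordinate $\b$, then bounding $\k$-many second coordinates by some $\dlt$) produce $\k = \bdd$ many members of the family below a single $f_{\b,\dlt}$. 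That is precisely the failure of $\bdd$-thinness, and your first lemma then finishes the proof. So rather than designing a bespoke iteration with a preservation argument, as you propose, one gets the required ``fat'' cofinal structure of $(\w^\w,\leq^*)$ directly by prescribing it in advance; I would encourage you to complete your write-up this way, since your equivalence plus Hechler's theorem gives a slightly more informative statement than the paper's.
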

\begin{proof}
To see this, let us first recall a theorem of Hechler \cite{Hechler}: If $P$ is a partially ordered set in which every countable subset has an upper bound, then there is a ccc forcing extension in which $P$ is isomorphic to a cofinal subset of $(\w^\w,\leq^*)$. Let $\k$ and $\lambda$ be uncountable regular cardinals with $\k < \lambda$, and let $P = \k \times \lambda$ with the natural product order, i.e., $(\b_0,\dlt_0) \leq (\b_1,\dlt_1)$ if and only if $\b_0 \leq \b_1$ and $\dlt_0 \leq \dlt_1$. By Hechler's Theorem, there is some ccc forcing extension in which there is a mapping $(\b,\dlt) \mapsto f_{\b,\dlt}$ from $\k \times \lambda$ onto a cofinal subset of $(\w^\w,\leq^*)$. It is not too difficult to show this implies $\bdd = \k$ and $\dom = \lambda$.
 Now, suppose $\DD$ is any dense sub-poset of $\HH$. As in the proof of Theorem~\ref{thm:Hechler}, there is some $s \in \w^{<\w}$ such that $\DD_s$ is a dominating family. For each $f \in \DD_s$, let $\pi(f)$ denote some pair $(\b,\dlt)$ such that $f \leq^* f_{\b,\dlt}$. Because $\DD_s$ is a dominating family, $\mathrm{Im}(\pi) = \set{\pi(f)}{f \in \DD}$ is cofinal in $\k \times \lambda$. In particular, $\card{\mathrm{Im}(\pi)} = \lambda$, and by the pigeonhole principle (using $\k < \lambda$ and the regularity of $\lambda$) there is some particular $\b$ such that the first coordinate of $\pi(f)$ is equal to $\b$ for $\lambda$-many members of $\mathrm{Im}(\pi)$. Using  the fact that $\k < \lambda$, there is some $\dlt$ such that of the $\lambda$-many members of $\mathrm{Im}(\pi)$ with first coordinate $\b$, at least $\k$ of them have second coordinate $< \! \dlt$. This implies that $\card{\set{f \in \DD}{f \leq^* f_{\b,\dlt}}} \geq \k$. Arguing as in the last paragraph of the proof of Theorem~\ref{thm:Hechler}, it follows that  for some $t \in \w^{<\w}$, $(s \cat t,f_{\b,\dlt}) \leq (s \cat t,f)$ for at least $\k$-many $f \in \DD_s$. But this implies $\pnt(\DD) > \k$, and as $\DD$ was arbitrary, $\pnt(\HH) > \k = \bdd$.
\end{proof}

\begin{observation}
It is consistent that $\bdd = \aleph_1$ and $\pnt(\HH) = \aleph_0$.
\end{observation}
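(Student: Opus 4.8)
The plan is to force (or argue directly) that in a model of \ch — where necessarily $\bdd=\aleph_1$, indeed $\bdd=\dom=\aleph_1$ — the poset $\HH$ carries a dense subset all of whose upper cones are finite. First record the \zfc lower bound $\pnt(\HH)\ge\aleph_0$: given any dense $\DD\sub\HH$, the fact that $\HH$ is separative and has no minimal conditions lets one build, below any fixed condition, a strictly $\le$-decreasing chain $p_0>p_1>\dots>p_n$ of members of $\DD$, for each $n$; then $p_n$ extends the $n{+}1$ members $p_0,\dots,p_n$ of $\DD$, so $\nt(\DD)\ge\aleph_0$. Consequently it is enough to produce, consistently with $\bdd=\aleph_1$, a dense $\DD\sub\HH$ such that every condition extends only finitely many members of $\DD$: by the previous sentence the upper cones in such a $\DD$ are finite but of unbounded size, so $\nt(\DD)=\aleph_0$ and hence $\pnt(\HH)=\aleph_0$.

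The construction of $\DD$ reduces to a purely combinatorial object in $\w^\w$. Call $\E\sub\w^\w$ a \emph{tail-thin dominating family} if $\E$ is $\le^*$-dominating and for every $h\in\E$ and every $n\in\w$ the set $\set{h'\in\E}{h'(m)\le h(m)\text{ for all }m\ge n}$ is finite. Given such an $\E$, set $\DD=\set{(s,h)}{s\in\w^{<\w},\ h\in\E}$. Density is immediate: given $(s,f)$, choose $h\in\E$ with $h\ge^* f$, say $h\ge f$ on $[N,\w)$ with $N\ge\domain(s)$, and let $t=s\cat\bigl(f\rest[\domain(s),N)\bigr)$; then $(t,h)\le(s,f)$ and $(t,h)\in\DD$. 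Finiteness of upper cones is exactly where tail-thinness enters: if $(t,h)\le(t',h')$ with $(t,h),(t',h')\in\DD$, then $t'=t\rest j$ for one of the finitely many $j\le\domain(t)$, and for each such $j$ the second coordinate $h'$ must lie in $\set{h'\in\E}{h'(m)\le h(m)\text{ for all }m\ge\domain(t)}$, a finite set. So every condition of $\HH$ extends only finitely many members of $\DD$, as required.

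Thus the real task is to build a tail-thin dominating family in a model of \ch — equivalently, since such a family is a set of reals, to add one by a countably closed poset of countable approximations (countably closed forcing adds no new reals, hence preserves \ch and $\bdd=\aleph_1$). The plan for the construction is a recursion of length $\omega_1$: fixing an enumeration $\w^\w=\set{f_\xi}{\xi<\omega_1}$, build $\E=\set{h_\xi}{\xi<\omega_1}$ together with a rank map $\rk\colon\E\to\w$ so that (a) $h_\xi\ge^* f_\xi$ for all $\xi$, giving the dominating property; (b) $\rk$ is strictly $\le^*$-increasing on distinct elements, so each rank class is a $\le^*$-antichain and every $h$ has $\le^*$-predecessors in only finitely many rank classes; and (c) on each tail $[n,\w)$, every $h$ dominates only finitely many members of each individual rank class. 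Tail-thinness then follows by adding up finitely many finite contributions. To carry stage $\xi$ through one chooses $h_\xi\ge^* f_\xi$ so that it is $\le^*$-above only boundedly-ranked earlier functions (then $\rk(h_\xi)$ may be taken one above that bound) and so that it ``overtakes'' each earlier function that it does dominate only at coordinates distributed finite-to-one along $\w$.

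The main obstacle is precisely this last requirement. The naive choice — taking $h_\xi$ merely large enough to dominate $f_\xi$ and all previously chosen functions — does make $h_\xi$ eventually dominate each earlier $h_\eta$, but the tail on which it does so can start later and later, so that on a fixed tail $[n,\w)$ infinitely many earlier functions end up below $h_\xi$; this produces only a family in which each element has countably (not finitely) many functions below it on each tail, which is not enough. Arranging the recursion so that the coordinate at which $h_\xi$ overtakes $h_\eta$ depends finite-to-one on $\eta$, while still dominating every real, is the delicate bookkeeping at the heart of the proof.
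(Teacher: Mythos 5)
Your reduction of the Observation to the existence of what you call a tail-thin dominating family is exactly the paper's: the paper's dense set is $\set{(s,f_\a)}{s \in \w^{<\w},\ \a<\w_1}$ for a family satisfying precisely your tail-thinness condition, and your density and finite-cone verifications go through (the latter suffices for arbitrary conditions of $\HH$, since the cone $\set{p\in\DD}{q\le p}$ over any $q\in\HH$ is contained in the cone over some $p^*\in\DD$ with $p^*\le q$). The genuine gap is in the one place you defer: the construction of the family itself. Your plan (b) is provably impossible. Since $\bdd\ge\aleph_1$ is a theorem of \zfc, every dominating family $\mathcal E$ contains a strictly $\le^*$-increasing chain $\seq{h_\a}{\a<\w_1}$: at stage $\a$, bound the countably many $h_\xi$ ($\xi<\a$) by some $g$, and choose $h_\a\in\mathcal E$ with $h_\a\ge^* g+1$, so that $h_\a$ eventually strictly exceeds every earlier $h_\xi$. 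A rank function into $\w$ that increases strictly along $\le^*$ on distinct elements must be injective on this chain, which is absurd; equivalently, no dominating family is a union of countably many $\le^*$-antichains, so the rank classes you describe cannot exist. You correctly diagnose that the difficulty is making the overtaking coordinates depend finite-to-one on the earlier functions, but the mechanism you propose cannot be realized, and you explicitly leave the ``delicate bookkeeping'' unresolved --- so the proof is incomplete at its essential step.

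What the paper does instead is assume $\bdd=\dom=\aleph_1$ and build the family as a $\le^*$-\emph{increasing chain} $\seq{f_\a}{\a<\w_1}$, imposing tail-thinness by Hausdorff's gap technique: at stage $\a$ one enumerates $\a$ in order type $\w$ and chooses $f_\a$ to overtake each earlier $f_\b$ only from a point $n_\b$ on, with these points chosen so that $\b\mapsto\min\set{k}{f_\b\le f_\a \text{ on } [k,\w)}$ is finite-to-one, while still dominating a prescribed member of a fixed dominating family. The linear ordering is not incidental: Hausdorff's lemma is exactly the statement that this condition on a countable increasing sequence survives passage to a suitably chosen upper bound, which is what keeps the recursion alive through all $\w_1$ stages. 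To repair your write-up, replace the rank-function scheme with this chain construction (or cite Hausdorff's indestructible $(\w_1,\w_1)$-gap construction, as the paper does); note also that your proposed forcing detour would need Hausdorff-type side conditions (``promises'') on the countable approximations and does not follow from countable closure alone.
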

\begin{proof}
To see this, suppose $\bdd = \dom = \aleph_1$. Then there is a sequence $\seq{f_\a}{\a < \w_1}$ of members of $\w^\w$ with the following three properties: 
\begin{itemize}
\item[$\circ$] $\set{f_\a}{\a < \w_1}$ is a dominating family, 
\item[$\circ$] $f_\a \leq^* f_\b$ whenever $\a < \b$, and
\item[$\circ$] for any $\a < \w_1$ and $k \in \w$, $\set{\b < \w_1}{f_\b(n) \leq f_\a(n) \text{ for all } n \geq k}$ is finite.
\end{itemize}
We leave the details of constructing such a sequence to the reader, but point out that it is very similar to Hausdorff's construction of indestructible $(\w_1,\w_1)$-gaps in $\w^\w$ \cite{Hausdorff}. Given such a sequence, let 
$$\DD = \set{(s,f_\a)}{s \in \w^{<\w},\a < \w_1} \sub \HH.$$ 
Because $\set{f_\a}{\a < \w_1}$ is a dominating family, $\DD$ is a dense sub-poset of $\HH$. Now let $(s,f) \in \HH$. Pick $\a < \w_1$ such that $f \leq^* f_\a$ and pick $k \geq \domain(s)$ such that $f(n) \leq f_\a(n)$ for all $n \geq k$. Fix $t \sub s$. If $(s,f) \leq (t,f_\b)$, then we must have $f_\b(n) \leq f(n)$ for all $n \geq \domain(s)$ and hence $f_\b(n) \leq f_\a(n)$ for all $n \geq k$. By our third condition listed above, this implies that $(s,f) \leq (t,f_\b)$ for only finitely many $\b < \w_1$. As this holds for each of the finitely many $t \sub s$, we may conclude that $(s,f)$ extends only finitely many conditions in $\DD$. Hence $\pnt(\DD) = \aleph_0$, and hence $\pnt(\HH) = \aleph_0$.
\end{proof}

Finally, we list three other examples one may use to show the failure of $\axiom$ under various set-theoretic hypotheses:
\begin{itemize}
\item[$\circ$] Let $\BB$ denote the random real forcing. Then $\BB$ is ccc, and if \ma holds then $\pnt(\BB) > \aleph_1$. (The proof essentially follows from Exercise 27 in chapter 2 of \cite{Kunen}.)
\item[$\circ$] Suppose there is a $P_\k$-point $\U \in \omega^*$ for some $\k > \aleph_1$. (The existence of such an ultrafilter follows, for example, from $\continuum = \pseudo > \aleph_1$.) Let $\mathbb M(\U)$ denote the Mathias-type forcing for diagonalizing $\U$, namely 
$$\mathbb M(\U) = \set{(s,A)}{A \in \U}$$ 
where $(t,B) \leq (s,A)$ if and only if $t \supseteq s$, $B \hspace{-.4mm}\setminus\hspace{-.4mm} \max t \sub A  \hspace{-.4mm}\setminus\hspace{-.4mm} \max t$, and $t \setminus s \sub A \setminus \max s$.
It is easy to show that $\mathbb M(\U)$ is ccc, and one may show, by an argument similar to the proof of Theorem~\ref{thm:Hechler}, that $\pnt(\mathbb M(\U)) \geq \k$. 
\item[$\circ$] More generally, one does not really need $\U$ to be an ultrafilter in the preceding example: it is enough to have $\U$ be any filter generated by a $\k$-directed subset of $\mathcal P(\omega)$ (that is, $\k$-directed with respect to the $\sub^*$ relation). The existence of such a family follows from the existence of $P_\k$-points in $\w^*$, or from the inequality $\bdd > \aleph_1$ (just use the sets of points in $\omega \times \omega$ below the graphs of a dominating family of functions). But the existence of such a filter is not a theorem of \zfc: Kunen showed in his thesis that there is no such filter in the Cohen model. 
\end{itemize}


\end{document}